\theoremstyle{plain}
\newtheorem{thm}{Theorem}[section]
\newtheorem{lem}[thm]{Lemma}
\newtheorem{prop}[thm]{Proposition}
\newtheorem{cor}[thm]{Corollary}
\theoremstyle{definition}
\newtheorem{defn}[thm]{Definition}
\newtheorem{exm}[thm]{Example}
\newtheoremstyle{remark}
{15pt}
{1em}
{}
{}
{\bf}
{:}
{ }
{}
\theoremstyle{remark}
\newtheorem{rem}[thm]{Remark}
\newtheoremstyle{not}
{15pt}
{5pt}
{}
{}
{\bf}
{:\hspace{0.05cm}}
{ }
{}
\theoremstyle{not}
\newtheorem*{notation}{Notation}
\newtheoremstyle{proof}
{}
{2pt}
{}
{}
{\it}
{:}
{ }
{}
\theoremstyle{proof}
\renewcommand{\O}{\mathcal{O}}
\newcommand{\Os}{\tilde{\O}}
\newcommand{\Uad}{U^{ad}}
\newcommand{\On}{\Omega_{n}}
\newcommand{\Oext}{\Omega^{ext}}
\newcommand{\RO}{\partial \Omega}
\newcommand{\se}{\sigma}
\newcommand{\Div}{\text{div}}
\newcommand{\Umg}[2]{B_{#1}(#2)}
\newcommand{\cupdot}{\ensuremath{\mathaccent\cdot\cup}}
\newcommand{\R}[1]{\mathbb{R}^{#1}}
\newcommand{\N}[1]{\mathbb{N}^{#1}}
\newcommand{\C}[2]{C^{#1}(#2)}
\newcommand{\Ck}[3]{[C^{#1}(#2)]^{#3}}		
\newcommand{\D}[2]{\mathscr{D}^{#1}(#2)}
\newcommand{\G}{\mathcal{G}}
\newcommand{\seq}[1]{(#1)_{n\in \mathbb{N}}}
\newcommand{\subseq}[1]{(#1)_{k \in \mathbb{N}}}
\newcommand{\Norm}[2]{\Vert #1 \Vert_{#2}} 
\newcommand{\abl}[2]{D^{#1}#2}
\newcommand{\dist}{\text{dist}}
\newcommand{\diam}{\text{diam}}
\newcommand{\A}{\mathcal{A}}
\newcommand{\T}{\mathcal{T}}
\begin{document}

\title{\sc Optimal Reliability for Components under Thermomechanical Cyclic Loading}

\author{{\sc Laura Bittner and Hanno Gottschalk}\\{\small  School of
Mathematics and Science}\\ \small{Bergische Universit\"at
Wuppertal}\\
{\small \tt laura.bittner@math.uni-wuppertal.de}\\
{\small \tt hanno.gottschalk@uni-wuppertal.de}}

\maketitle

\vspace{.3cm}

\begin{abstract}
We consider the existence of optimal shapes in the context of the thermomechanical system of partial differential equations (PDE) using the recent approach based on elliptic regularity theory \cite{Hanno} \cite{Agm59,Agm64,GilbTrud}. We give an extended and improved definition of the set of admissible shapes based on a class of sufficiently differentiable deformation maps applied to a baseline shape. The obtained set of admissible shapes again allows one to prove a uniform Schauder estimate for the elasticity PDE. In order to deal  with thermal stress, a related uniform Schauder estimate is also given for the heat equation. Special emphasis is put on Robin boundary conditions, which are motivated from convective heat transfer. It is shown that these thermal Schauder estimates can serve as an input to the Schauder estimates for the elasticity equation \cite{Hanno}. This is needed to prove the compactness of the (suitably extended) solutions of the entire PDE system in some state space that carries a $C^2$-H\"older topology for the temperature field and a $C^3$-H\"older topology for the displacement. From this one obtains he property of graph compactness, which is the essential tool in an proof of the existence of optimal shapes. Due to the topologies employed, the method works for objective functionals that depend on the displacement and its derivatives up to third order and on the temperature field and its derivatives up to second order. This general result in shape optimization is then applied to the problem of optimal reliability, i.e. the problem of finding shapes that have minimal failure probability under cyclic thermomechanical loading.       
\end{abstract}

\noindent{\bf Key Words:} Shape optimization, probabilistic failure times, optimal reliability

\noindent {\bf MSC (2010):} {49Q10, 60G55}


\section{\label{sec:Intro}Introduction}

Objective functionals that are motivated by failure probabilities of mechanical components that are subject to cyclic mechanical loading have been introduced in \cite{BGS,Hanno,Schmitz} to the field of shape optimization \cite{DelfZol11,ShapeOpt,SokZol92}. Here, failure times are modelled by spatio-temporal Poisson Point Processes (PPP) and their first occurrence times.  In this paper, we  take enhanced material damage into account that occurs at elevated temperature. This is due to thermal stresses and also due to reduced durability of materials at higher temperatures. The assumed design objective is to choose the shape of a component in a set of admissible shapes such that the failure probability after a given number of load cycles is minimal. 

The reliability assessment of cooled components, e.g. in gas turbines or vessels, leads to a set of multi-physical partial differential equations that is known as the the thermo-mechanical equation \cite{HetEsl09}.  In this paper, we investigate a shape optimization problems with the thermo-mechanical system of PDEs as state equation. We consider fairly singular set of objective functionals that are motivated by the probability of failure under cyclic themo-mechanical loading, as it is the case for low cycle fatigue (LCF). This set of objective functionals introduce temperature dependence to the objective functionals in \cite{Hanno,SBKRSG,SSGBRK}. We show that, under suitable regularity assumptions on the admissible shapes and the boundary conditions, there exist of shapes with minimal failure probability.

The boundary conditions to the thermal equation in this paper are of Robin type. This corresponds to convective heat transfer at the boundary of the component, which is most frequently used in engineering applications. For the coupling of thermal stress to the mechanical equation, we follow a partially coupled approach, see e.g. \cite{HetEsl09}. When writing the coupled system in strong form, the gradient of the temperature field becomes a volume force density for the elasticity PDE and the temperature difference to a baseline temperature, where the component is in its original stress free state, becomes a surface load density in the elasticity equation. This re-defines the right hand side of the elasticity equation. 

In this situation, uniform regularity estimates for the temperature fields serve two purposes: On the one hand, the temperature field itself is part of the solution of the state equation in the sense of shape optimization with PDE constraints. We thus need some compactness for this component of the state space. Secondly, regularity assumptions on volume forces and surface loads are crucial input for the regularity estimates for the elasticity equation. Here uniform bounds on $C^1$-H\"older norms for the volume force densities and $C^2$-H\"older for the surface load densities are required for the uniform Schauder estimates in \cite{Agm59,Agm64,GilbTrud,Hanno} . In this paper we prove that such estimates in fact hold true. Having established suitable uniform Schauder estimates on both components of the state space, we can now proceed to prove graph compactness in the sense of shape optimization using the Arzela-Ascoli theorem for H\"older spaces \cite[Section 8]{Alt}. This implies existence of optimal shapes in the class of admissible shapes for all objective functionals that are continuous with respect to the state space topology, which is a direct sum topology for the temperature and displacement field as specified above.

Proofs for the existence of optimal shapes are not new in shape optimization, see e.g. \cite{BGS,BucBut05,Chen75,DelfZol11,Eppler07,ShapeOpt}. However, the class of objective functionals that arise from component reliability are too singular to deal with them in the framework of weak solutions, as done in the cited references. Like in \cite{Hanno,Schmitz}, we therefore need to extend the general strategy of shape optimization in order to be able to deal with the objective functionals that arise from component reliability.  

The objective of this paper is to give a mathematical existence proof of a shape optimization problem in a context which is as close to a real design problem as possible -- taking high temperature design in gas turbine engineering as a model. Although this intention can only be realized partially, we show that the machinery of elliptic regularity theory and the general theory of shape optimization \cite{ShapeOpt} is powerful enough, to deal with certain non oversimplified problems in a mathematically rigorous way.          
       
The paper is organized as follows: In Section \ref{sec:ProbFail} we review crack initiation processes and their relation to shape optimization following essentially \cite{Hanno}. However, more and different notions of optimal reliability are introduced and compared. It is shown that in the case of Weibull models, all these different notions coincide, which allows to prove existence of shapes with optimal reliability in a stronger sense as given in \cite{Hanno}. It is shown how the problem of optimal reliability is related to shape optimization problems. We also extend the crack initiation model for LCF to the case of non constant temperature fields using an approach based on Arrhenius' law. 

 In Section \ref{sec:PDE} we review the themo-mechanical PDE as the state equation to our problem.  Section \ref{sec:Basics} gives some background from the abstract theory of shape optimization following \cite{ShapeOpt}. In Section \ref{sec:Domains} we present a new and enlarged set of admissible domains based on $C^{k,\alpha}$ deformations of a baseline shape. Compactness results on the set of admissible shapes are given. 

Section \ref{sec:UnifSchauder} contains the uniform Schauder  estimates for the mechanical, the thermal and the thermo-mechanical state equations. Here special emphasis is laid on realistic convective (Robin) boundary conditions for the heat equation. The exposition is based on \cite{GilbTrud} and proves the uniformity of the regularity estimates in this reference with respect to our set of admissible shapes. 

In the following Section \ref{sec:OpShapes} the uniform Schauder estimates are used to prove graph compactness of a large class of rather singular shape optimization problems of local type, which includes those derived from optimal reliability. 

Finally we give a summary and outlook in Section \ref{sec:SO}. An appendix collects some technical results from the literature for the convenience of the reader.  
\section{\label{sec:ProbFail}Failure Probabilities and Objective Functionals in Shape Optimization}

In this section, we derive failure probabilities of mechanical components under thermomechanical loading and give some specific models that are motivated, in a wide sense, from gas turbine design or vessel design.

\subsection{\label{subsec:SFTM} Stochastic Failure Time Models and Point Processes}

Let $\Omega\subseteq \R{3}$ be some bounded domain with boundary $\partial\Omega$. By $\bar \Omega$ we denote the closure of $\Omega$ in $\R{3}$. In this article, $\Omega$ stands for a region in $\R{3}$ that is filled with some material and that symbolizes a mechanical device. Initially this component has a given reference temperature $T_0$ and there are no loads that could deform $\Omega$. Under operation, the device $\Omega$ undergoes a deformation which is defined by the displacement field $u=u(\Omega):\bar \Omega\to \R{3}$. The displacement is caused by mechanical volume loads $f(\Omega):\Omega\to\R{3}$ (e.g. gravity or centrifugal loads) and surface loads $g(\Omega):\partial \Omega\to\R{3}$ (e.g. gas pressure). The device might be clamped at some locations, i.e. $u(\Omega)(x)=0$, $x\in \partial\Omega_D\subseteq \Omega$. 

In the course of the load cycle, some heating and cooling might take place, such that $u(\Omega)$ also depends on a temperature distribution $T(\Omega):\bar \Omega\to \R{}$. The temperature-dependence of $u(\Omega)$ is  due to thermal expansion. The temperature distribution inside $\Omega$ in turn will depend on external temperatures $T_e(\Omega):\partial\Omega\to\R{}$ and the heat transfer mechanism, that is generally modelled by a heat transfer coefficient $k(\Omega):\partial\Omega\to \R{}$. 

Usually, the thermal and mechanical loads lead to a deterioration of the material, also known as fatigue. This degeneration will result in the formation of cracks that in the end will destroy the component. The prediction of the number of load cycles (or time) that will be passed safely before crack formation takes place is difficult. Hence, it is more realistic to set up probabilistic models for crack formation. 

Let $\bar\Omega$ be the closure of $\Omega\subseteq \mathbb{R}^3$ and let $\mathcal{C}=\mathbb{R}_+\times \bar\Omega$ be the configuration space for crack initiation. I.e.\ each crack initiation on the initially crack free component is identified with a location $x\in\bar\Omega$ and a time $t\in\mathbb{R}_+$.

Let $\mathcal{R}(\mathcal{C})$ be the space of Radon measures on $\mathcal{C}$ and $\mathcal{R}_c(\mathcal{C})$ be the space of Radon counting measures. This kind of measures maps measurable sets in the Borel $\sigma$-Algebra $\mathcal{B}(\mathcal{C})$ to $\mathbb{N}_0\cup\{\infty\}$. We equip the space of Radon (counting) measures with the weak-*-topology that is generated by the mappings $\mathcal{R}(\mathcal{C})\ni \gamma\to \int_\mathcal{C}hd\gamma$, $h\in C_c(\mathcal{C})$. The associated Borel $\sigma$-algebra is denoted by $\mathcal{B}(\mathcal{R})$ and $\mathcal{B}(\mathcal{R}_c)$, respectively. $C_c({\cal C})$ are the continuous functions on ${\cal C}$ with compact support.

A Radon counting measure $\gamma$ is called simple, if for every bounded set $A\in \mathcal{B}(\mathcal{C})$ there exists $n<\infty$ and $c_j\in A$, $j=1,\ldots,n$ all distinct such that $\gamma\restriction_A=\sum_{j=1}^n\delta_{c_j}$.

\begin{defn}[Crack Initiation Process]
Let $(\Xi,{\cal A},P)$ be a probability space and $\gamma :(\Xi,{\cal A},P)\to (\mathcal{R}(\mathcal{C}),\mathcal{B}(\mathcal{R}_c))$ be measurable. Then $\gamma$ is called a point process.
\begin{itemize}
\item[(i)] If a point process $\gamma$ is almost surely simple and non-atomic, i.e.\ $\gamma(\{c\})=0$ holds $P$ a.s. $\forall c\in\mathcal{C}$, then we call $\gamma$ a crack initiation process on $\bar \Omega$.
\item[(ii)] For a crack initiation process $\gamma$ we define $\tau =\tau(\gamma)=\inf\{t\geq 0:\gamma([t,\infty)\times \bar\Omega)>0\}$, the fist failure time associated with $\gamma$. Note that $\tau:(\Xi,{\cal A},P)\to ([0,\infty],{\cal B})$ is a random variable, where ${\cal B}$ is the extended Borel $\sigma$-Algebra.
\end{itemize}
\end{defn}

The notion of a crack initiation process reflects the stochastic nature of crack formation that has been widely studied in the materials science literature, see e.g. \cite{Werkstoffe}. The process is chosen to be simple since no two cracks can initiate at the same location and the same time (in that case they would form one crack). Non atomicness is motivated by the fact that in non deterministic crack formation processes there should be no point on the component, where the probability that a crack originates exactly there, is larger than zero. 

We now investigate the situation in which cracks have not yet grown to a size where they can influence the macroscopic stress field. In such a situation, it is reasonable to think of the various crack initiations to be independent. This simplifying assumption is justified in the study of first failure times, as we do here.

\begin{defn}[Independent Increments and Poisson Point Process]
\label{def:PPP}
Let $\gamma :(\Xi,{\cal A},P)\to (\mathcal{R}(\mathcal{C}),\mathcal{B}(\mathcal{R}_c))$ be a point process on $\mathcal{C}=[0,\infty)\times\bar \Omega$. 
\begin{itemize}
\item[(i)] The point process $\gamma$ has independent increments, if for $C_1,\ldots, C_n\in {\cal B}(\mathcal{C})$ mutually disjoint we have that $\gamma(C_1),\ldots,\gamma(C_n)$ are independent random variables.
\item[(ii)] The point process $\gamma$ is a Poisson Point Process (PPP) if $\exists\,\rho\in {\cal R}(\mathcal{C})$ such that $\forall C\in{\cal B}(\mathcal{C})$, $\gamma(C)$ is Poisson distributed with intensity $\rho(C)$, i.e. $P(\gamma(C)=n)=e^{-\rho(C)}\,\nicefrac{\rho(C)^n}{n!}.$
\end{itemize} 
\end{defn} 

For a crack initiation process the property of having independent increments is equivalent to being a PPP, see \cite{Wahrsch}. Thus, if we accept the assumption of independent increments, we only have to model the intensity measure $\rho$ as a function of the stress and the temperature state on $\bar\Omega$.   

Let ${\cal O}$ be some set of admissible domains $\Omega\subseteq \R{3}$ with associated temperature fields $T=T(\Omega)$ and displacement field $u=u(\Omega)$. It is then natural to model the local crack initiation intensity as a function of time and local values of the temperature and the displacement along with their derivatives. Here we restrict ourselves to derivatives up to third order in $u$ and up to second order in $T$:
\begin{align}
\label{eqa:locFailure}
\begin{array}{rcl}
\rho(\Omega,C)&=&\int_{C\cap (\R{}\times \Omega)}\varrho_{vol}(t,x,T,\nabla T,u,\nabla u,\nabla^2 u,\nabla^3u)\,dt\,dx\\
&+&\int_{C\cap (\R{}\times \partial \Omega)}\varrho_{sur}(t,x,T,\nabla T,u,\nabla u,\nabla^2 u,\nabla^3u)\,dt\,dA.\end{array}
\end{align}
Here $dA$ stands for the surface measure on $\partial\Omega$ and $\varrho_{vol/sur}$ are some non negative functions that depend on the physics of the crack formation. The function $\varrho_{vol}$ represents volume driven failure mechanisms, like e-g-creep, whereas $\varrho_{sur}$ models surface driven crack formation, like e.g. low cycle fatigue (LCF). We will be more specific in Subsection \ref{subsec:LCF} below. Here we derive some immediate consequences on the probability distribution of the first failure time:

\begin{lem}
\label{lem:Prob}
Let $\gamma=\gamma(\Omega)$ be the PPP associated with (\ref{eqa:locFailure}) and $\tau=\tau(\gamma)$ the associated first failure time. Let $C_t=[0,t]\times \bar \Omega$ and $H(\Omega,t)=\rho(\Omega,C_t)$. Then $H(\Omega,t)$ is the cumulative hazard rate of the random variable $\tau$, i.e. we have for the cumulative distribution function $F_\tau(t)$
\begin{equation}
 \label{eqa:CDFFFT}
F_\tau(t)=P(\tau\leq t)=1-e^ {-H(\Omega,t)},~~t\in\R{}. 
\end{equation}
\end{lem}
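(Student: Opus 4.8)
The plan is to reduce the distribution of the first failure time $\tau$ to the elementary Poisson law of the single random variable $\gamma(C_t)$. The first step is to rewrite the event $\{\tau\le t\}$ as the event $\{\gamma(C_t)\ge 1\}$ that at least one crack has initiated by time $t$, where $C_t=[0,t]\times\bar\Omega$ as in the statement. To make this translation rigorous I would use that $s\mapsto\gamma([0,s]\times\bar\Omega)$ is nondecreasing, takes values in $\mathbb{N}_0\cup\{\infty\}$, and is right-continuous, the latter because $\gamma$ is a Radon measure and hence finite on each bounded set $[0,s]\times\bar\Omega$, so that $\gamma([0,s']\times\bar\Omega)\downarrow\gamma([0,s]\times\bar\Omega)$ as $s'\downarrow s$. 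Unwinding the definition of $\tau$ as an infimum, the set $\{s\ge 0:\gamma([0,s]\times\bar\Omega)>0\}$ is then either empty---in which case $\tau=+\infty$ and $\gamma(C_t)=0$ for every finite $t$---or a closed half-line $[\tau,\infty)$; in both cases $\{\tau\le t\}=\{\gamma(C_t)\ge 1\}$ exactly. (Alternatively, the absolute continuity of $\rho$ built into (\ref{eqa:locFailure}) gives $\rho(\{t\}\times\bar\Omega)=0$, so that a single time-slice carries no $\gamma$-mass $P$-almost surely and the precise handling of the endpoint is immaterial.) Measurability of $\tau$ is already part of its definition, so nothing more is needed here.

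Next I would bring in the PPP hypothesis. Since $\bar\Omega\subseteq\R{3}$ is bounded, $C_t$ is a bounded set in $\mathcal{B}(\mathcal{C})$, so Definition \ref{def:PPP}(ii) applies and $\gamma(C_t)$ is Poisson distributed with intensity $\rho(C_t)$. By the definition of $H$ in the statement, $\rho(C_t)=\rho(\Omega,C_t)=H(\Omega,t)$, hence $P(\gamma(C_t)=0)=e^{-H(\Omega,t)}$. Combining the two steps,
\[
F_\tau(t)=P(\tau\le t)=P(\gamma(C_t)\ge 1)=1-P(\gamma(C_t)=0)=1-e^{-H(\Omega,t)},
\]
which is exactly (\ref{eqa:CDFFFT}); the reading of $H(\Omega,t)$ as the cumulative hazard rate of $\tau$ is then just the standard relation between the distribution function and the cumulative hazard of a nonnegative random variable.

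The only point that needs a little care is the identification $\{\tau\le t\}=\{\gamma(C_t)\ge 1\}$ in the first step, where one must handle the boundary case $\tau=t$ and the possibility that the defining infimum is not attained; both are settled by the right-continuity of $s\mapsto\gamma([0,s]\times\bar\Omega)$, equivalently by the non-atomicity of $\rho$ in the time variable coming from (\ref{eqa:locFailure}). Everything else is a direct consequence of the PPP property, so the argument is short.
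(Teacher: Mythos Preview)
Your proof is correct and follows exactly the same route as the paper's: identify $\{\tau>t\}=\{\gamma(C_t)=0\}$ and then apply the Poisson law $P(\gamma(C_t)=0)=e^{-\rho(\Omega,C_t)}$ before passing to complements. The paper compresses this into a single line, whereas you additionally justify the event identity via right-continuity of $s\mapsto\gamma([0,s]\times\bar\Omega)$ and non-atomicity of $\rho$ in time; this extra care is fine but not strictly needed here.
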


\begin{proof}
Note that $P(\tau> t)=P(\gamma(C_t)=0)=e^ {-\rho(\Omega,C_t)}$ and go over to the complementary probabilities.
\end{proof}

\subsection{\label{subsec:OR}Optimal Reliability Problems}

The problem of optimal reliability can be formulated on several levels. Every choice of a component shape $\Omega$ in the design process induces the probability distribution (\ref{eqa:CDFFFT}). Hence, it is not obvious how to compare the distribution of $\tau=\tau(\Omega)$ with that of $\tau'=\tau(\Omega')$. The following definition gives a number of alternatives:

\begin{defn}[Different Notions of Reliability]
\label{def:Reliability}
Let $\tau$ and $\tau'$ be two first failure times associated via  Definition \ref{def:PPP} and (\ref{eqa:locFailure}) to the design alternatives $\Omega,\Omega'\subseteq \R{3}$. 
\begin{itemize}
\item[(i)] The design $\Omega$ is more or equally reliable  than $\Omega'$ at fixed time $t\in\R{}_+$, if the probability of failure is less for $\Omega$ than for $\Omega'$, hence $F_\tau(t)\leq F_{\tau'}(t)$.
\item[(ii)] The design $\Omega$ is more or equally reliable than $\Omega'$ in first stochastic order, if it is more or equally reliable in the sense (i) for any time $t\in \R{}_+$.
\item[(iii)] Suppose that $\tau$ and $\tau'$ are continuously distributed. Then $\Omega$ is more reliable than $\Omega'$ in the sense of instantaneous hazard, if $h_\tau(t)\leq h_{\tau'}(t)$ holds $\forall t\geq 0$. Here $h_\tau(t)=f_\tau(t)/(1-F_\tau(t))$ is the Hazard rate and $f_\tau$ the density function of $\tau$.
\end{itemize}  
\end{defn} 

Clearly, each of this notions of 
reliability gives rise to an optimal reliability problem:
Solutions of the optimal reliability problem \ref{def:OptReliability} with respect to first order stochastic dominance are interesting because a product with optimal design $\Omega^*$ serves the customer more reliably until any time of its life cycle -- design to life is excluded.
The concept of higher reliability in terms of instantaneous hazard enhances this notion: One design is not only more reliable for any time span $[0,t]$, but this also holds at each instance in time. Since $F_\tau(t)=1-e^{-\int_0^th_\tau(s) ds}$ \cite{EscobarMeeker}, $h_\tau(t)\leq h_{\tau'}(t)~\forall t\in\R{}_+$ implies $F_\tau(t)\leq F_{\tau'}(t)$ $\forall t\in\R{}_+$ and thus the concept of higher reliability in instantaneous hazard is more  restrictive than the higher reliability in first stochastic order.

\begin{defn}[Optimal Reliability Problem]
\label{def:OptReliability}
Let ${\cal O}$ be some set of admissible domains $\Omega\subseteq\R{3}$. 
Then, $\Omega^*\in{\cal O}$ solves the problem of optimal reliability according to (i), (ii) or (iii) of Definition \ref{def:Reliability}, if it is more or equally reliable than any other design $\Omega\in{\cal O}$ in the given sense.
\end{defn}

Let ${\cal F}_{vol/sur}(t,\cdot)=\int_0^t \varrho_{vol/sur}(s,\cdot)\, ds$ and
\begin{equation}
\label{eqa:ObjFunct1}
{\cal J}_t(\Omega,u,T)={\cal J}_{vol,t}(\Omega,u,T)+{\cal J}_{sur,t}(\Omega,u,T)
\end{equation} 
with
\begin{align}
\label{eqa:ObjFunct2}
\begin{array}{rcl}
{\cal J}_{vol,t}(\Omega,u,T)&=&\int_{\Omega}{\cal F}_{vol}(t,x,T,\nabla T,u,\nabla u,\nabla^2 u,\nabla^3u)\,dx\\
{\cal J}_{sur,t}(\Omega,u,T)&=&\int_{\partial \Omega}{\cal F}_{sur}(t,x,T,\nabla T,u,\nabla u,\nabla^2 u,\nabla^3u)\,dA\end{array}.
\end{align}  
Then the optimal reliability problems Definition \ref{def:OptReliability} can be translated to the following shape optimization problems:

\begin{lem}
\label{lem:OpRelSO}
Let the crack initiation process $\gamma=\gamma(\Omega)$ for some $\Omega\in{\cal O}$ be a PPP with intensity measure (\ref{eqa:locFailure}). 
\begin{itemize}
\item[(i)] A shape $\Omega^*\in{\cal O}$ solves the optimal reliability problem (i) at fixed time $t\in \R{}_+$ if and only if 
\begin{equation}
\label{eqa:SO}
{\cal J}_t(\Omega^*,u,T)\leq {\cal J}_t(\Omega,u,T)~~\forall\, \Omega\in{\cal O}.
\end{equation}
\item[(ii)] Furthermore a shape $\Omega^ *$ solves the optimal reliability problem \ref{def:OptReliability} in the sense of first order stochastic dominance \ref{def:Reliability} (ii), if and only if $\Omega^*$ solves (\ref{eqa:SO}) for all $t\in\R{}_+$.   
\item[(iii)] Finally, a shape $\Omega^ *$ also solves the optimal reliability problem in the sense of instantaneous hazard, if an only if
\begin{equation}
\label{eqa:OptHaz}
\frac{d{\cal J}_t(\Omega^*,u,T)}{dt}\leq \frac{d{\cal J}_t(\Omega,u,T)}{dt}~~\forall t\in\R{}_+,~\Omega\in{\cal O}.
\end{equation} 
\end{itemize}
\end{lem}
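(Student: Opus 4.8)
The plan is to translate each probabilistic comparison into the corresponding analytic inequality using the explicit formulas already established, so the proof is essentially a chain of equivalences. First, for part (i), I would invoke Lemma \ref{lem:Prob}: the cumulative distribution function of the first failure time is $F_\tau(t)=1-e^{-H(\Omega,t)}$ with $H(\Omega,t)=\rho(\Omega,C_t)$. Since $z\mapsto 1-e^{-z}$ is strictly monotone increasing on $[0,\infty)$, the inequality $F_\tau(t)\le F_{\tau'}(t)$ holds if and only if $H(\Omega,t)\le H(\Omega',t)$. It then remains to identify $H(\Omega,t)$ with ${\cal J}_t(\Omega,u,T)$. This is a direct computation: splitting $\rho(\Omega,C_t)$ into its volume and surface parts according to (\ref{eqa:locFailure}), applying Fubini on $[0,t]\times\Omega$ and $[0,t]\times\partial\Omega$ respectively, and using the definition ${\cal F}_{vol/sur}(t,\cdot)=\int_0^t\varrho_{vol/sur}(s,\cdot)\,ds$, one obtains exactly ${\cal J}_{vol,t}+{\cal J}_{sur,t}={\cal J}_t(\Omega,u,T)$ as in (\ref{eqa:ObjFunct1})--(\ref{eqa:ObjFunct2}). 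Hence $\Omega^*$ is more or equally reliable than every $\Omega\in{\cal O}$ at time $t$ precisely when (\ref{eqa:SO}) holds.

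For part (ii), the notion of first order stochastic dominance is by Definition \ref{def:Reliability}(ii) nothing but requiring the comparison in (i) to hold for every $t\in\R{}_+$ simultaneously; so the equivalence follows immediately from part (i) applied at each fixed $t$, with the same optimal shape $\Omega^*$ working uniformly in $t$. No new estimate is needed here — it is a quantifier rearrangement.

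For part (iii), I would first note that under the continuity hypothesis the hazard rate satisfies $h_\tau(t)=\frac{d}{dt}H(\Omega,t)$; this follows by differentiating $F_\tau(t)=1-e^{-H(\Omega,t)}$ and dividing by $1-F_\tau(t)=e^{-H(\Omega,t)}$, which gives $h_\tau(t)=H'(\Omega,t)$ directly, or alternatively from the representation $F_\tau(t)=1-e^{-\int_0^t h_\tau(s)\,ds}$ cited from \cite{EscobarMeeker} by uniqueness. Combining with the identification $H(\Omega,t)={\cal J}_t(\Omega,u,T)$ from part (i), we get $h_\tau(t)=\frac{d{\cal J}_t(\Omega,u,T)}{dt}$ and similarly for $\tau'$. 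Therefore $h_\tau(t)\le h_{\tau'}(t)$ for all $t$ is equivalent to (\ref{eqa:OptHaz}), and $\Omega^*$ solving the instantaneous-hazard optimal reliability problem is equivalent to (\ref{eqa:OptHaz}) holding for all competitors.

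The only genuinely delicate point — and the step I would treat most carefully — is the differentiability needed in part (iii): one must know that $t\mapsto {\cal J}_t(\Omega,u,T)$ is differentiable so that $h_\tau$ exists and equals ${\cal J}_t'$. Since ${\cal F}_{vol/sur}(t,\cdot)=\int_0^t\varrho_{vol/sur}(s,\cdot)\,ds$, differentiability in $t$ (with $\frac{d}{dt}{\cal F}_{vol/sur}(t,\cdot)=\varrho_{vol/sur}(t,\cdot)$) holds whenever $s\mapsto\varrho_{vol/sur}(s,\cdot)$ is continuous and the spatial integrals can be differentiated under the integral sign, which is licensed by a local domination bound on $\varrho_{vol/sur}$ uniform on compact $t$-intervals; the continuity of $\tau$ is then exactly the statement that $H(\Omega,\cdot)$ has no jumps, i.e. is absolutely continuous. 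I would state this regularity as a standing assumption (it is implicit in ``$\tau$ continuously distributed'' in Definition \ref{def:Reliability}(iii)) and then the rest is the routine chain of equivalences above.
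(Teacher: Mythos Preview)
Your proposal is correct and follows essentially the same route as the paper: identify $H(\Omega,t)$ with ${\cal J}_t(\Omega,u,T)$ via Lemma~\ref{lem:Prob} and (\ref{eqa:locFailure})--(\ref{eqa:ObjFunct2}), use monotonicity of $z\mapsto 1-e^{-z}$ for (i), quantify over $t$ for (ii), and differentiate for (iii) to obtain $h_\tau(t)=\frac{d}{dt}{\cal J}_t$. The paper's proof is a two-line sketch of exactly this chain; your version simply spells out the steps (and the regularity caveat for (iii)) in more detail.
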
  
\begin{proof}
This follows from $P(\tau\leq t)=1-e^{-{\cal J}_{vol,t}(\Omega,u,T)}$, see Lemma \ref{lem:Prob} and equations (\ref{eqa:locFailure}) and (\ref{eqa:ObjFunct1}--\ref{eqa:ObjFunct2}). Thus, $h_\tau(t)= \frac{d{\cal J}_t(\Omega^*,u,T)}{dt}$.

\end{proof}

\begin{rem}
	Another perspective to the problem of optimal reliability is given by the notion of acceptability functionals: 
	
	Let $\mathscr{A}(\tau)$ be an acceptability functional in the sense of \cite{PR}. Common choices include the life expectation $\mathscr{A}(\tau)=\mathbb{E}[\tau]$ or risk adjusted versions of it, e.g. $\mathscr{A}(\tau)=\mathbb{E}[\tau]-\delta {\rm Var}[\tau]$ for some $\delta>0$. Furthermore, for $\phi:\R{}\to\R{}$ measurable, $\mathscr{A}_\phi(\tau)=\mathbb{E}[\phi(\tau)]$ also defines an acceptability functional, provided the $\phi(\tau)$ is in $L^ 1(\Xi,P)$. Then, the design $\Omega$ is more or equally reliable than $\Omega'$ with respect to $\mathscr{A}$, if ${\cal A}_\phi(\tau)\geq{\cal A}_\phi(\tau')$.   
	
	Hence, a shape $\Omega^ *$ solves the optimal reliability problem in the sense of acceptability for all $\mathscr{A}_\phi$ with increasing $\phi$, if and only if it solves (\ref{eqa:SO}) $\forall t\in\R{}_+$, see the equivalent formulations of first order stochastic dominance in \cite[Theorem 1.13 (i)]{PR}.
	
	Obviously the ranking of failure or survival probabilities at a given warranty time or service interval in \ref{def:Reliability} (i) is a special case of the general notion of acceptability with $\phi=1_{\{\tau >t\}}$.
\end{rem}

Interestingly, there exists a special situation, when the solution of the optimal reliability problem in the sense of instantaneous hazard can be obtained by finding at least one solution to the related problem at a fixed time $t$.

\begin{defn}[Local Weibull Model]
\label{def:LocWeibull}
Let $m>0$ be a Weibull shape parameter and 
\begin{equation}
\label{eqa:locWei}
\varrho_{vol/sur}(t,\cdot)=\frac{m}{N_{vol/sur}(\cdot)}\left(\frac{t}{N_{vol/sur}(\cdot)}\right)^{m-1}
\end{equation}
for some functions $N_{vol/sur}(\cdot)=N_{vol/sur}(x,T,\nabla T,u,\nabla u,\nabla^ 2u,\nabla^ 3u)$ with values in $[0,\infty]$. The associated crack initiation processes are called local Weibull models. 

Note that the convention $\frac{1}{\infty}=0$ is used here and one of $N_{vol/sur}(\cdot)$ could be identically infinite.  
\end{defn} 

Since $ N_{vol/sur} $ can be interpreted as the number of load cycles passed until a crack forms, $ N_{vol}= 0$ or $ N_{sur}=0 $ means that the rupture originates either in the volume or at the surface of the mechanical device. An example for the derivation of such an functional $ N_{sur} $ will be presented in the next section.\\ 

We recall that a random variable $\tau$ is Weibull distributed with scale $N$ and shape $m$, $\tau \sim {\rm Wei}(N,m)$, if $F_\tau(t)=1-e^ {-\left(\frac{t}{N}\right)^ m}$. 

\begin{lem}
\label{lem:Wei}
Let $\gamma=\gamma(\Omega)$ be the PPP from a local Weibull model. Then the first failure time  $\tau=\tau(\gamma)$ is Weibull distributed, $\tau\sim {\rm Wei}(N,m)$, with $N=N(\Omega)$ given by
\begin{align}
\label{eqa:scaleWei}
\begin{array}{rcl}
N&=&\left(\int_\Omega \left(\frac{1}{N_{vol}(x,T,\nabla T,u,\nabla u,\nabla^ 2u,\nabla^ 3u)}\right)^ mdx\right.\\
&+&\left.\int_{\partial\Omega}\left(\frac{1}{N_{sur}(x,T,\nabla T,u,\nabla u,\nabla^ 2u,\nabla^ 3u)}\right)^m dA\right)^{-\frac{1}{m}}.
\end{array}
\end{align}
\end{lem}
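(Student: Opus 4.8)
The plan is to compute the cumulative distribution function $F_\tau(t)$ directly from Lemma \ref{lem:Prob} together with the explicit form of the intensity density in the local Weibull model \eqref{eqa:locWei}, and to match the result against the defining form $F_\tau(t)=1-e^{-(t/N)^m}$ of a Weibull law. By Lemma \ref{lem:Prob} we have $F_\tau(t)=1-e^{-H(\Omega,t)}$ with $H(\Omega,t)=\rho(\Omega,C_t)$ and $C_t=[0,t]\times\bar\Omega$. So the whole statement reduces to showing that $H(\Omega,t)=(t/N)^m$ with $N$ as in \eqref{eqa:scaleWei}.

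First I would carry out the time integration. Plugging \eqref{eqa:locWei} into \eqref{eqa:locFailure} with $C=C_t$, the time integral factorizes off the spatial variables: for each fixed $x$ (and each fixed field configuration) one has $\int_0^t \varrho_{vol/sur}(s,\cdot)\,ds = \int_0^t \frac{m}{N_{vol/sur}}\left(\frac{s}{N_{vol/sur}}\right)^{m-1} ds = \left(\frac{t}{N_{vol/sur}}\right)^m = t^m \left(\frac{1}{N_{vol/sur}}\right)^m$, using the convention $\frac{1}{\infty}=0$ so that the identically-infinite case contributes zero. This is exactly ${\cal F}_{vol/sur}(t,\cdot)$ in the notation introduced before \eqref{eqa:ObjFunct1}. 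Hence
\[
H(\Omega,t) = t^m\left(\int_\Omega\left(\frac{1}{N_{vol}(\cdot)}\right)^m dx + \int_{\partial\Omega}\left(\frac{1}{N_{sur}(\cdot)}\right)^m dA\right),
\]
where the arguments of $N_{vol/sur}$ are the local values of $x,T,\nabla T,u,\nabla u,\nabla^2 u,\nabla^3 u$. Denoting the bracketed spatial integral by $N^{-m}$ — which is precisely the definition \eqref{eqa:scaleWei} of $N=N(\Omega)$ — we get $H(\Omega,t)=(t/N)^m$, hence $F_\tau(t)=1-e^{-(t/N)^m}$, i.e. $\tau\sim{\rm Wei}(N,m)$.

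The only genuine subtlety, and the step I would be most careful about, concerns degenerate cases and measurability: one must check that the integrals in \eqref{eqa:scaleWei} are well defined in $[0,\infty]$ (using $\frac{1}{\infty}=0$ throughout), that $N$ is allowed to take the value $0$ (when the integral diverges, giving the almost-sure-immediate-failure degenerate Weibull law) or $\infty$ (when both $N_{vol},N_{sur}\equiv\infty$, giving $\tau=\infty$ a.s.), and that these conventions are consistent with the formula $F_\tau(t)=1-e^{-(t/N)^m}$ read with the same conventions. I would also note explicitly that the field configurations $T=T(\Omega)$, $u=u(\Omega)$ are fixed by the choice of $\Omega$, so that $N_{vol/sur}(\cdot)$ are honest functions of $x$ alone once $\Omega$ is chosen, which is what makes the spatial integrals in \eqref{eqa:scaleWei} meaningful. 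Beyond this bookkeeping, the proof is a one-line computation, so the write-up can be kept very short.
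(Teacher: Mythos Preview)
Your proposal is correct and follows essentially the same route as the paper: insert the local Weibull density \eqref{eqa:locWei} into \eqref{eqa:locFailure} and \eqref{eqa:CDFFFT}, carry out the time integral (which factors out as $t^m$), and identify the remaining spatial integral as $N^{-m}$. The paper's proof is a one-sentence sketch of exactly this computation; your additional remarks on the degenerate cases $N\in\{0,\infty\}$ and on measurability are reasonable bookkeeping but not needed for the level of detail the paper aims at.
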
 
\begin{proof}
Insert (\ref{eqa:locWei}) into (\ref{eqa:locFailure}) and (\ref{eqa:CDFFFT}). The integral in time now  up to some maximal time $t$ can be easily solved as it factors out.  We obtain $F_\tau(t)=1-e^{-t^mN^{-m}}$ with $N$ given by (\ref{eqa:scaleWei}). 
\end{proof}

In the context of a local Weibull model, the problem of optimal reliability in first order stochastic dominance can be reduced to a simple shape optimization problem, as the following proposition shows:

\begin{prop}
\label{prop:OpRelFOSD}
Let $\gamma=\gamma(\Omega)$ be the crack initiation process associated with a local Weibull model where $m\geq 1$ and let ${\cal O}$ be the set of admissible shapes. Then, 
\begin{itemize}
\item[(i)] $\Omega^*\in{\cal O}$ is a solution to the optimal reliability problem \ref{def:OptReliability} (iii) if and only if it solves the optimal reliability problem \ref{def:OptReliability} (i) at a given time $t$.
\item[(ii)]    $\Omega^*\in{\cal O}$ is a solution to the optimal reliability problem \ref{def:OptReliability} (i) if and only if 
\begin{equation}
\label{eqa:SOWei}
{\cal J}(\Omega^*,u,T)\leq {\cal J}(\Omega,u,T)~~\forall \Omega\in{\cal O}.
\end{equation}
for
\begin{align}
\label{eqa:scaleWei2}
\begin{array}{rcl}
{\cal J}(\Omega^*,u,T)&=&\int_\Omega \left(\frac{1}{N_{vol}(x,T,\nabla T,u,\nabla u,\nabla^ 2u,\nabla^ 3u)}\right)^ mdx\\
&+&\int_{\partial\Omega}\left(\frac{1}{N_{sur}(x,T,\nabla T,u,\nabla u,\nabla^ 2u,\nabla^ 3u)}\right)^m dA.
\end{array}
\end{align}
\end{itemize} 
\end{prop}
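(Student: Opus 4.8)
The plan is to reduce the statement to consequences of Lemma \ref{lem:Wei} together with elementary properties of the Weibull distribution and of the objective functional. Observe first that by Lemma \ref{lem:Wei} the first failure time $\tau=\tau(\Omega)$ is Weibull distributed, $\tau\sim{\rm Wei}(N(\Omega),m)$, with $N(\Omega)$ given by (\ref{eqa:scaleWei}); moreover, comparing (\ref{eqa:scaleWei}) and (\ref{eqa:scaleWei2}) one sees immediately that $N(\Omega)^{-m}={\cal J}(\Omega,u,T)$, i.e.\ the functional in (\ref{eqa:scaleWei2}) is exactly $N(\Omega)^{-m}$. This identification is the bridge between the reliability notions and the shape optimization problem.

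For part (ii) I would argue as follows. Since $F_\tau(t)=1-e^{-(t/N)^m}=1-e^{-t^m N^{-m}}$ and for fixed $t>0$ the map $s\mapsto 1-e^{-t^m s}$ is strictly increasing in $s\in[0,\infty)$, we have $F_\tau(t)\le F_{\tau'}(t)$ if and only if $N(\Omega)^{-m}\le N(\Omega')^{-m}$, i.e.\ if and only if ${\cal J}(\Omega,u,T)\le {\cal J}(\Omega',u,T)$. Applying this with $\Omega'$ ranging over all of ${\cal O}$ shows that $\Omega^*$ is more or equally reliable than every $\Omega\in{\cal O}$ at the fixed time $t$ (Definition \ref{def:OptReliability} (i)) if and only if (\ref{eqa:SOWei}) holds. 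Note also that, because the criterion ${\cal J}(\Omega,u,T)\le {\cal J}(\Omega',u,T)$ does not depend on $t$, solving the fixed-time problem (i) for one $t>0$ is the same as solving it for all $t$, i.e.\ problem (ii) of Definition \ref{def:OptReliability} collapses onto problem (i); this is the phenomenon already announced before Definition \ref{def:LocWeibull}.

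For part (i) I would compute the instantaneous hazard of a Weibull variable explicitly: from $F_\tau(t)=1-e^{-(t/N)^m}$ one gets $f_\tau(t)=\frac{m}{N}(t/N)^{m-1}e^{-(t/N)^m}$ and hence $h_\tau(t)=f_\tau(t)/(1-F_\tau(t))=\frac{m}{N}\left(\frac{t}{N}\right)^{m-1}=m\,t^{m-1}N^{-m}$. Thus for every fixed $t>0$ the hazard rate $h_\tau(t)$ is a strictly increasing function of $N^{-m}={\cal J}(\Omega,u,T)$ (here $m\ge 1$ guarantees $t^{m-1}\ge 0$ with the factor $m t^{m-1}$ strictly positive for $t>0$, and for $m=1$ the hazard is simply the constant $N^{-1}$). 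Consequently $h_\tau(t)\le h_{\tau'}(t)$ for all $t\ge 0$ if and only if $N(\Omega)^{-m}\le N(\Omega')^{-m}$ if and only if ${\cal J}(\Omega,u,T)\le {\cal J}(\Omega',u,T)$ — the very same condition as in part (ii). Chaining the two equivalences yields that $\Omega^*$ solves Definition \ref{def:OptReliability} (iii) if and only if it solves Definition \ref{def:OptReliability} (i) at any given $t$, which is the assertion of part (i). One may alternatively phrase this using $h_\tau(t)=\frac{d{\cal J}_t(\Omega,u,T)}{dt}$ from Lemma \ref{lem:OpRelSO} (iii) together with the fact that for the local Weibull model ${\cal J}_t(\Omega,u,T)=t^m{\cal J}(\Omega,u,T)$, so that $\frac{d}{dt}{\cal J}_t=m t^{m-1}{\cal J}(\Omega,u,T)$ has the same sign structure.

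The argument is essentially bookkeeping once the key identity ${\cal J}(\Omega,u,T)=N(\Omega)^{-m}$ is in hand, so there is no serious obstacle; the only point requiring a little care is the role of the hypothesis $m\ge 1$. For $m<1$ the hazard $h_\tau(t)=m t^{m-1}N^{-m}$ is still monotone in $N^{-m}$, so the stated equivalences actually persist; the restriction $m\ge1$ is presumably imposed because it is the physically relevant regime (non-decreasing hazard / ageing material) and because it is what is needed later when these functionals are fed into the existence theory. I would remark on this but not dwell on it. The proof should close by noting that combining (i) and (ii) also re-derives, in the Weibull case, the coincidence of all three reliability notions of Definition \ref{def:Reliability}, consistent with the discussion preceding Lemma \ref{lem:Wei}.
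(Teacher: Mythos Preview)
Your proposal is correct and follows essentially the same route as the paper: both use Lemma \ref{lem:Wei} to identify ${\cal J}(\Omega,u,T)=N(\Omega)^{-m}$, then exploit the monotonicity of $F_\tau(t)=1-e^{-t^mN^{-m}}$ and of the Weibull hazard $h_\tau(t)=\frac{m}{N}(t/N)^{m-1}$ in $N^{-m}$ to obtain the equivalences. Your version is in fact more explicit than the paper's (which only spells out the direction ``(i) at fixed $t$ $\Rightarrow$ (iii)'' and bundles (ii) into a reference to Lemma \ref{lem:OpRelSO}), and your remark that the hypothesis $m\geq 1$ is not actually needed for the stated equivalences is a valid observation.
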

\begin{proof}
(i) Let $t\in\R{}_+$ be fixed. If $\Omega^ *$ solves the optimal reliability problem \ref{def:OptReliability} (iii) with respect to that time, we have $F_{\tau(\Omega^*)}(t)\leq F_{\tau(\Omega)}(t)$ $\forall\, \Omega\in{\cal O}$ and thus 
$$
1-e^{-t^mN(\Omega^*)^{-m}}\leq 1-e^{-t^mN(\Omega)^{-m}} ~\Leftrightarrow~ N(\Omega^*)\geq N(\Omega)~~\forall \Omega\in{\cal O}.$$
But then the hazard rates fulfill for $m\geq 1$
\begin{equation}
h_{\tau(\Omega^*)}(t)=\frac{m}{N(\Omega^*)}\left(\frac{t}{N(\Omega^*)}\right)^{m-1}\leq \frac{m}{N(\Omega)}\left(\frac{t}{N(\Omega)}\right)^{m-1}=h_{\tau(\Omega)}(t)~,~~\forall t\in\R{}_+.
\end{equation}

(ii) Combine (i) for $t=1$, Lemma \ref{lem:Wei}, Def.\ \ref{def:LocWeibull} and  Lemma \ref{lem:OpRelSO} (i).
\end{proof}
\subsection{\label{subsec:LCF}The Example of Low Cycle Fatigue} 

In this subsection we extend the local Weibull model for Low Cycle Fatigue (LCF) for the purely mechanical load case in \cite{Hanno} to the case of thermo-mechanical loading. It is well known \cite{Werkstoffe} that repeated loading of a mechanical component ultimately leads to failure, even if the single loads are well below the ultimate tensile strength of the material. This degradation of  strength is known as fatigue. LCF is a damage mechanism that is best understood for polycrystalline metal. In LCF, shear stress acting on atomic layers with the densest packing leads to the intragranular displacement of one dimensional lattice defects. When these defects reach the surface of the component, intrusions and extrusions form, see Figure \ref{fig:Damage} (a). This leads to stress concentration at the tip of the intrusion, from which a crack originates. Perculation of the initial intragranular crack over grain boundaries then causes macroscopic cracks, \cite{Werkstoffe}.  This is why LCF is a stress and surface driven failure mechanism, see Fig \ref{fig:Damage} (b).

\begin{figure}
\includegraphics[scale=.23]{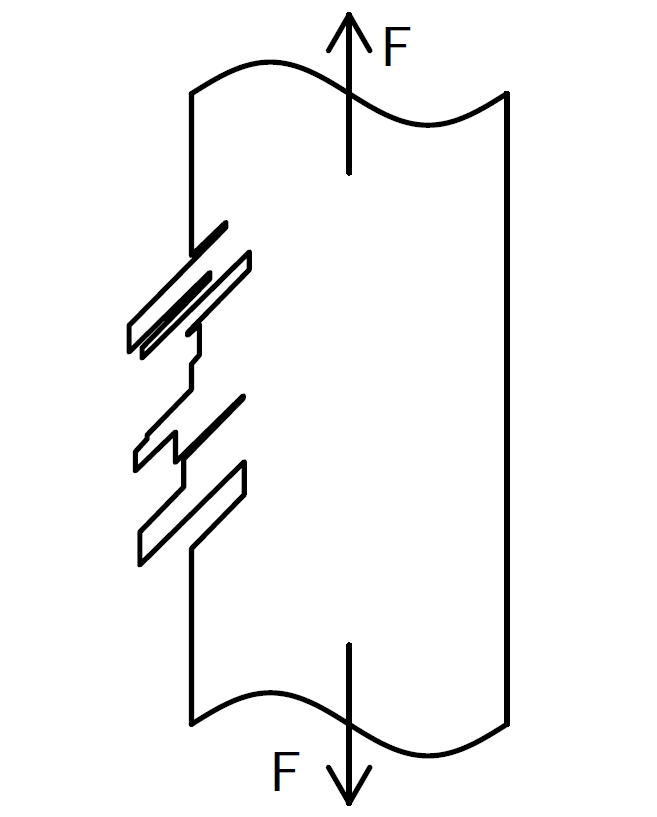}~\includegraphics[width=0.6\textwidth]{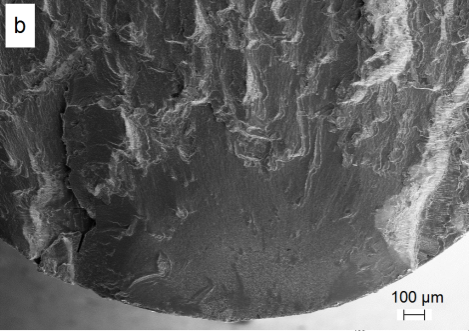}

\caption{(a) Intrusions and extrusions at the surface forming under cyclic application of the force $F$. (b) Crack initation at the lower boundary of a specimen cracked during a cyclic life test for the Ni-based superalloy RENE80. }
\label{fig:Damage}
\end{figure}

Let  $\sigma=\sigma(\nabla u,T):\Omega\to \R{3\times 3}$ be the stress field associated with the displacement field $u$ (the first derivatives thereof, in particular) and the temperature field $T$  via a material equation. Here we suppress the  $\Omega$ dependence for notational simplicity. For the example of linear thermo-elasticity we refer to the following Section \ref{sec:PDE} Eq.\ (\ref{Def: stress}).  

By $ \sigma'=\sigma-\frac{1}{3}{\rm tr}(\sigma)I$ we denote the trace free part of the stress field, where $I$ stands for the identity matrix on $\R{3}$. We shortly recall the steps that lead to the calculation of the approximate number of load cycles to crack initiation $N_{sur}$ for the case of cyclic, purely mechanical loading
\begin{enumerate}
\item Define the amplitude comparison stress as the von Mises stress associated with $ \sigma$, i.e. $ \sigma_v=\sqrt{\frac{3}{2} \sigma': \sigma'}$ and define the amplitude stress as $\epsilon_a=\sigma_v/2$.

\item If $ \sigma$ is obtained from a linear thermo-elasticity, convert  the amplitude stress $ \sigma_a$ to elastic-plastic amplitude stress, e.g. via the Neuber relation
\begin{equation}
\label{eqa:SD}
\frac{\sigma_a^2}{E}=\frac{( \sigma_a^{el-pl})^2}{E}+ \sigma_a^{el-pl}\left(\frac{\sigma_a^{el-pl}}{K}\right)^{1/n'}.
\end{equation}
Otherwise, i.e. if $\sigma$ is obtained from an thermo-elastoplastic problem, set $ \sigma_a^{el-pl}=\sigma_a$. In equation (\ref{eqa:SD}) $E$ stands for Young's modulus, $K$ for the hardening constant and $n'$ for the hardening exponent.
\item Convert the elastic-plastic comparison stress amplitude to the elastic-plastic strain amplitude $\varepsilon_a^{el-pl}$ via the Ramberg-Osgood relation:
\begin{equation}
\label{eqa:RO}
\varepsilon_a^{el-pl}=\frac{ \sigma_a^{el-pl}}{E}+\left(\frac{\sigma_a^{el-pl}}{K}\right)^{1/n'}.
\end{equation}
\item Solve the Coffin-Manson-Basquin equation for $\mathcal{N}_{sur}$,
\begin{equation}
\label{eqa:CMB}
\varepsilon_a^{el-pl}=\frac{\sigma_f'}{E}(2\mathcal{N}_{sur})^b+\varepsilon_f'(2\mathcal{N}_{sur})^c.
\end{equation}
Here $\sigma_f',\varepsilon_f'>0$ and $b,c<0$ are material constants.
\end{enumerate}

Let us now turn to the case, when, in addition to a mechanical load, a temperature change from the baseline temperature $T_0$ to the temperature field $T$ takes place. It is then usually assumed that the durability changes with temperature. Dissemination of displacements through the crystal is facilitated by the thermal excitation of the atomic oscillations in the lattice. Therefore, LCF life to crack initiation usually decreases with temperature. Here, we take a simplistic Arrhenius law \cite{Werkstoffe} for the temperature dependence of LCF life. This is an 'import' from creep damage modelling and not necessarily the most adequate temperature model. Alternatively one could consider temperature dependent CMB parameters $\sigma_f'(T),\varepsilon_f(T),b(T)$ and $c(T)$ modelled by continuous functions. The field of temperature models in fatigue however is vast and lies beyond the scope of this article. We thus choose 
\begin{equation}
N_{sur}(\cdot)=e^{-Q(T-T_0)}\mathcal{N}_{sur}(\cdot).
\end{equation}  
Here $Q$ plays the r\^ole of an activation energy. Using (\ref{eqa:CMB}) it is easily shown that this corresponds to $b(T)=b$ and $c(T)=c$ constant and $ \frac{\sigma_f'(T)}{E}=e^{-Qb(T-T_0)}\frac{\sigma_f'}{E}$ and $\varepsilon_f'(T)=e^{-Qc(T-T_0)}\varepsilon_f'$. 
Wrapping up these modelling steps, we obtain the following:

\begin{lem}
\label{lem:Model}
Let $N_{sur}=N_{sur}(\nabla u,T)$ be defined as above. Then $\left(\frac{1}{N_{sur}}\right)^m$ depends continuously on $\nabla u$ and $T$. 
\end{lem}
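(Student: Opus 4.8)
The plan is to trace the dependence of $N_{sur}$ on $(\nabla u, T)$ through the modelling chain (1)--(4) together with the Arrhenius factor, showing at each step that the newly defined quantity is a continuous function of the previous one, and that wherever an implicit equation is solved, the solution exists, is unique, and depends continuously on the data. First I would observe that $\sigma = \sigma(\nabla u, T)$ is (by the material law, e.g. linear thermo-elasticity as in Section~\ref{sec:PDE}) an affine, hence continuous, function of $\nabla u$ and $T$; consequently the deviatoric part $\sigma'$, the von Mises stress $\sigma_v = \sqrt{\tfrac{3}{2}\sigma':\sigma'}$, and the amplitude stress $\sigma_a = \sigma_v/2$ are continuous in $(\nabla u, T)$ (the square root is continuous on $[0,\infty)$, and $\sigma':\sigma' \geq 0$).

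Next I would handle the two implicit relations. For the Neuber relation (\ref{eqa:SD}), I would rewrite it as $\Phi(\sigma_a^{el-pl}) = \sigma_a^2/E$ where $\Phi(s) = s^2/E + s\,(s/K)^{1/n'}$ for $s \geq 0$; since $n' > 0$, $\Phi$ is continuous, strictly increasing, $\Phi(0)=0$, $\Phi(s)\to\infty$, so $\Phi^{-1}$ exists and is continuous, giving $\sigma_a^{el-pl}$ as a continuous function of $\sigma_a$, hence of $(\nabla u, T)$. (In the thermo-elastoplastic case $\sigma_a^{el-pl} = \sigma_a$ directly.) The Ramberg-Osgood relation (\ref{eqa:RO}) then gives $\varepsilon_a^{el-pl}$ as an explicit continuous function of $\sigma_a^{el-pl}$. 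For the Coffin-Manson-Basquin equation (\ref{eqa:CMB}), I would again invert: with $b, c < 0$ and $\sigma_f', \varepsilon_f' > 0$, the map $n \mapsto \tfrac{\sigma_f'}{E}(2n)^b + \varepsilon_f'(2n)^c$ is continuous and strictly decreasing on $(0,\infty)$ with range $(0,\infty)$, so it has a continuous inverse, yielding $\mathcal{N}_{sur}$ as a continuous (positive, possibly $+\infty$ in the limiting cases) function of $\varepsilon_a^{el-pl}$. Finally the Arrhenius factor $e^{-Q(T-T_0)}$ is continuous in $T$, so $N_{sur} = e^{-Q(T-T_0)}\mathcal{N}_{sur}$ is continuous in $(\nabla u, T)$, and therefore so is $(1/N_{sur})^m$, using the convention $1/\infty = 0$ and that $t \mapsto t^m$ is continuous on $[0,\infty)$ for $m > 0$.

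The one point requiring a little care — and the main obstacle — is the behaviour at the degenerate values, i.e. continuity of $(1/N_{sur})^m$ where $N_{sur} \in \{0, \infty\}$. When $\sigma_a \to 0$ (zero stress amplitude), the CMB right-hand side must be matched to $\varepsilon_a^{el-pl} \to 0$, which forces $\mathcal{N}_{sur} \to \infty$; one checks that $1/\mathcal{N}_{sur} \to 0$ continuously, so $(1/N_{sur})^m \to 0$, consistent with the convention. The value $N_{sur} = 0$ does not actually arise from finite data here, but the map remains continuous into $[0,\infty]$ with its order topology. I would phrase the whole argument as: a finite composition of continuous maps between the relevant (one-dimensional, with values in $[0,\infty]$) spaces is continuous, so it suffices to verify continuity and monotonicity of each elementary building block and the continuity of each inverse via the inverse function theorem for strictly monotone continuous functions on an interval.
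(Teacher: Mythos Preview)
Your proposal is correct and follows essentially the same approach as the paper, which in fact gives no explicit proof: the lemma is stated as a summary (``Wrapping up these modelling steps'') of the preceding chain (von Mises stress, Neuber, Ramberg--Osgood, Coffin--Manson--Basquin, Arrhenius), leaving the verification of continuity at each step implicit. You have made explicit precisely what the paper leaves to the reader, including the monotonicity arguments needed to invert the Neuber and CMB relations and the treatment of the degenerate limit $\sigma_a\to 0$ where $\mathcal{N}_{sur}\to\infty$ and $(1/N_{sur})^m\to 0$ under the convention $1/\infty=0$.
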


Note that models that include notch support factors \cite{Werkstoffe} also require derivatives $\nabla^2u$. Therefore second order derivatives will enter into the definition of $N_{sur}$.

We have thus shown that in realistic models, the functions ${\cal F}_{vol/sur}$ from  (\ref{eqa:ObjFunct2}) can be assumed to be continuous, as long as the temperature does not exceed a certain limit $T_m$.

\section{\label{sec:PDE}The Thermomechanic Formulation of Linear Elasticity} \label{Sec: PDE constraints}
Up to here, we have seen that volume and surface forces as well as changes in temperatures that are imposed to a device have an impact on its durability. In linear thermo-elasticity both aspects are taken into account, see \cite{HetEsl09}. \\[1ex]
As proposed in \cite{Hanno} and for reasons of simplification, we restrict ourselves to the case that the load vector fields $ f $ and $ g $ are independent of time, as well as the temperature distribution field $ T $. This means that the time $ t $ only counts the number of load cycles passed.  Then, according to \cite{ErnGuerm04} the disjoint displacement-traction problem of linear isotropic elasticity is defined by 
\begin{align}
	\left. 
		\begin{array}{ll}
			 \Div( \se(u)) +f=0   &\text{ in } \Omega \\
			 \se(u)=\lambda\Div(u)I+\mu(\nabla u+(\nabla u)^{\top}) &\text{ in } \Omega\\ 
		 	 u = 0  &\text{ auf } \partial\Omega_{D}  \\
		 	 \se(u) \cdot \nu =g &\text{ auf }\partial \Omega_{N} \label{PO}
		 \end{array} 
	\right.
\end{align}
on $ \Omega \subset \R{3} $. Here, $ \RO_{N} \cupdot \RO_{D} $ is a partition of the domain's boundary, where on $ \RO_{N} $ a force surface density $ g\restriction_{\RO_{N}} $ is imposed and $ \RO_{D} $ is clamped. Let $\nu $ be the outward normal on $ \RO $. The load vector field $ f:\Omega \to \R{3} $ corresponds to a force imposed on the volume of $ \Omega $ and every solution $ u:\overline{\Omega}\to\R{3} $ is called displacement field on $ \Omega $. $ I $ denotes the identity on $ \R{3} $. In addition we assume that the Lamé-coefficients $\lambda,\mu>0$ are constants. For the computation of approximative numerical solutions a finite element approach can be used, confer \cite{ErnGuerm04} and \cite{HetEsl09}.

Temperature gradients, as for example occurring in materials that are heated and cooled at the same time, encourage abrasion as already explained in the preceded section. Therefore, they have to be taken into account when dealing with durability of devices that are run under changing temperatures. For implementation we use a combination of the PDE \eqref{PO} and heat equation. But, since simulation of heat transfer from the exterior to the interior of the component is necessary, we propose convective boundary conditions instead of the most common Dirichlet or Neumann conditions.

Let $ T:\overline{\Omega}\to R $ be a two times continuously differentiable heat distribution field solving 
\begin{align}
	\left. 
		\begin{array}{ll}
			 \Delta T=0 &\text{ in } \Omega\\[1ex]
			 k\dfrac{\partial T}{\partial \nu }=\eta(T_{e}-T) &\text{ on } \RO, \label{T}
		 \end{array} 
	\right.
\end{align}   
where $ T_{e}:\Oext\to \R{}  $, $ \Oext \supset \Omega$ denotes the component's ambient temperature, $ \eta:\RO \to R $ the heat transfer coefficient and $ k>0 $ the constant thermal conductivity.\\
According to \cite{HetEsl09}, taking into account a non constant temperature field $ T:\overline{\Omega} \to \R{} $ to \eqref{PO} leads to a new stress formulation
\begin{align}\label{Def: stress}
			\tilde{\sigma}(u)
			&= \lambda\Div(u)I+\mu(\nabla u+(\nabla u)^{\top})-\rho(3 \lambda+2\mu)(T-T_{0})I,
\end{align} 
where $ T_{0}\in \R{} $ is a reference temperature and $ \rho $ the coefficient of linear thermal expansion. If we insert the thermomechanical sress tensor field into  \eqref{PO} and rewrite it in term of it's mechanical component, the resulting combined equation reads

\begin{align}\label{PTO}
\begin{array}{c}
\begin{array}{ll}
\Delta T=0 &\text{ in } \Omega\\[1ex]
			 k\dfrac{\partial T}{\partial \nu }=\eta(T_{e}-T) &\text{ on } \RO,
\end{array}
\\\\
\begin{array}{ll}
\Div(\sigma(u))+f-\rho(3\lambda+2\mu)\nabla T=0 &\text{ in }\Omega\\
\se(u)=\lambda(\Div(u))I+\mu(\nabla u+(\nabla u)^{\top}) &\text{ in }\Omega\\
u = 0  &\text{ on } \partial\Omega_{D}\\
\sigma(u)\cdot \nu=g+\rho(3\lambda+2\mu)(T-T_{0})\cdot \nu &\text{ on }  \partial \Omega_{N}. 
\end{array}
\end{array}
\end{align}

\section{\label{sec:Basics}Basic Notations and Abstract Setting for Shape Design Problems}\label{Sec: basic notations}
In the following we are going to analyze generalized shape problems 
\begin{align}
\min ~& {\cal J}(\Omega) \nonumber \\
 s.t ~~&\Omega \text{ satisfies a given condition } P(\Omega), \label{P} \tag{$\mathbb{P}$}  \\
	&\Omega \in \O  \nonumber 
\end{align}
in order to adopt the universal concepts that are presentet e.g. in \cite{ShapeOpt} and apply the suggested solution strategy to the problem of LCF.

A solution of \ref{P} is sought as a set $\Omega$ in some family $\O$, called \textit{familiy of admissible domains}, containing possible candidates of shapes. First concentrate on the assumptions which have to be made on the \textit{cost functional}  $\mathcal{J}$, the restrictions $ P(\Omega) $ and the family $\O$. Therefore a summary of the approach presented in \cite{ShapeOpt} will be helpful. Further introductions to shape optimization can be found in \cite{DelfZol11, SokZol92, BucBut05}.

Let $\O$ be the set of admissible domains contained in a larger system $\Os$ on which we assume some kind of convergence, that has to be adjusted according to the respective problem. This convergence will be denoted by $\On \xrightarrow[]{\Os} \Omega $ as $n \to \infty$ for a sequence $\seq{\On}$ in $\Os$ and its limit $\Omega \in \Os$. Further we define a state space $V(\Omega)$ of real functions on $\Omega$ for every $\Omega \in \Os$ that contains possible solutions of $P(\Omega)$.\\
Since functions $y_{n} \in V(\On)$, $\On \in \Os$ are defined on changing sets, a suitable specification of convergence is necessary and has to be defined properly for each problem\footnote{See for example Definition \ref{Def: Conv. on var. domains}.}. Generally we write $y_{n} \rightsquigarrow y$ as $n \to \infty$. Moreover, require that any subsequence of a convergent sequence tends to the same limit as the original one. \\
In every $\Omega \in  \Os$ a \textit{state problem} $P(\Omega)$ has to be solved. This can be a PDE, ODE or variational inequality modeling for example forces that exert an influence on the component. Assuming that there is a unique solution $u(\Omega)$ for every state problem $P(\Omega)$ and every $\Omega \in \Os$ we are able to define the map $u:\Omega \to u(\Omega) \in V(\Omega)$. The resulting set $\G=\lbrace(\Omega,u(\Omega))\, \vert \, \Omega \in \O\rbrace$  is called the \textit{graph} of $u$ restricted to a chosen subfamily $\O$ of $\Os$. In this context we say that the graph $\mathcal{G}$ is compact iff every sequence $\seq{(\On,u(\On))} \subset \mathcal{G}$ has a subsequence $\subseq{\Omega_{n_{k}},u(\Omega_{n_{k}})}$ satisfying the condition 
\begin{align}
	\begin{array}{r c l}
	 \Omega_{n_{k}} &\xrightarrow[]{\Os} & \Omega \\
	  u(\Omega_{n_{k}}) &\rightsquigarrow & u(\Omega)
	\end{array}
	\label{Def: Gcomp}
\end{align}

\noindent as $ k \to \infty $ for some $(\Omega, u(\Omega)) \in  \mathcal{G}$.

\vspace{1em}

\noindent A \textit{cost functional} $\mathcal{J}$ on $\Os$ maps a pair $(\Omega,y)$,  $\Omega \in \Os$, $y \in V(\Omega)$ onto $ \mathcal{J}(\Omega,y)$, e.g. the functional introduced in Definition \ref{def:OptReliability}. Here, lower semi-continuity is defined as follows: 

\noindent Let the sequences $\seq{\On}$ in $\Os$ and $\seq{y_{n}},\, y_{n} \in V(\On)$ be convergent against $\Omega \in \O$ and $y \in V(\Omega)$, respectively. Then
\begin{align}
	\left.
	\begin{array}{r c l}
		\On & \xrightarrow[n \to \infty]{\Os} & \Omega \\
		 y_{n} &  \underset{ n \to \infty}{\leadsto}& y
	\end{array} \right\rbrace \Rightarrow \liminf_{n \to \infty} \mathcal{J}(\On,y_{n}) \geq \mathcal{J}(\O,y)
\end{align}

\noindent Now, let $\O$ be a subfamily of $\Os$ and let $u(\Omega)$ be the unique solution of a given state problem $P(\Omega)$ for every $\Omega \in \O$. An \textit{optimal shape design problem} can be defined by  
\begin{equation} \label{opt_shape_des}
	\text{Find } \Omega^{*} \in \O\text{ such that \ref{P} is solved}.
\end{equation}

\noindent The following theorem that can be found in \cite[Ch. 2]{ShapeOpt} provides conditions for the existence of optimal shapes. It is based on the general fact, that lower semicontinuous functions always possess a minimum on a compact set.

\begin{thm}\label{Thm: existence of optimal shapes}
Let $\Os$ be a family of admissible shapes with a subfamily $\O$. It is assumed that every $\Omega \in \O$ has an associated state problem $P(\Omega)$ with state space $V(\Omega)$ which is uniquely solved by $u(\Omega) \in  V(\Omega)$. Finally, require
\begin{itemize}
\item[(i)] compactness of $\mathcal{G}$,
\item[(ii)] lower semi-continuity of $\mathcal{J}$.
\end{itemize}  
Then there is at least one solution of the optimal shape design problem.
\end{thm}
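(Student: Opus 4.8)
The plan is to apply the direct method of the calculus of variations in precisely the form for which the two hypotheses are designed. First I would set
\[
 j^{*} := \inf_{\Omega \in \O} \mathcal{J}(\Omega,u(\Omega)) \in [-\infty,\infty),
\]
which is meaningful since $\O$ is a nonempty subfamily of $\Os$ by assumption, and choose a minimizing sequence, i.e.\ a sequence $\seq{\On}$ in $\O$ with $\mathcal{J}(\On,u(\On)) \to j^{*}$ as $n\to\infty$. By construction the pairs $(\On,u(\On))$ all lie in the graph $\G$.

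Next I would invoke hypothesis (i). Since $\G$ is compact, the sequence $\seq{(\On,u(\On))}$ admits a subsequence $\subseq{(\Onk,u(\Onk))}$ together with an element $(\Omega,u(\Omega))\in\G$ such that $\Onk \xrightarrow[]{\Os} \Omega$ and $u(\Onk)\rightsquigarrow u(\Omega)$ as $k\to\infty$. In particular $\Omega\in\O$, because by definition $\G$ consists only of pairs indexed by admissible shapes from $\O$; hence $\Omega$ is itself an admissible competitor in the infimum. Moreover, since any subsequence of a convergent real sequence has the same limit, $\mathcal{J}(\Onk,u(\Onk)) \to j^{*}$ along this subsequence as well.

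Now I would apply hypothesis (ii): lower semi-continuity of $\mathcal{J}$ with respect to the pair of convergences $\Onk \xrightarrow[]{\Os}\Omega$, $u(\Onk)\rightsquigarrow u(\Omega)$ yields
\[
 \mathcal{J}(\Omega,u(\Omega)) \leq \liminf_{k\to\infty}\mathcal{J}(\Onk,u(\Onk)) = j^{*}.
\]
On the other hand $\mathcal{J}(\Omega,u(\Omega)) \geq j^{*}$ by definition of the infimum, because $\Omega\in\O$. Therefore $\mathcal{J}(\Omega,u(\Omega)) = j^{*}$; incidentally this forces $j^{*}>-\infty$, as $\mathcal{J}$ takes real values, and it shows that $\Omega$ attains the minimum. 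Setting $\Omega^{*}:=\Omega$ then solves the optimal shape design problem \eqref{opt_shape_des}.

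As for the main obstacle: in this abstract setting there is essentially none, the argument being a three-line combination of compactness and lower semi-continuity; the real work of the theorem lies in \emph{verifying} its two hypotheses in concrete situations, namely graph compactness for the thermo-mechanical system (which is the purpose of the uniform Schauder estimates of Section~\ref{sec:UnifSchauder}) and lower semi-continuity of the reliability functionals $\mathcal{J}_{t}$. The single point deserving a word of care in the abstract proof is that the limit shape $\Omega$ is guaranteed to belong to $\O$ and not merely to the ambient family $\Os$; this is exactly what is built into the definition \eqref{Def: Gcomp} of graph compactness, so no separate closedness argument for $\O$ is required.
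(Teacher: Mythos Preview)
Your proof is correct and is precisely the standard direct-method argument the paper has in mind: the paper does not spell out a proof of this theorem but cites \cite[Ch.~2]{ShapeOpt} and remarks that it ``is based on the general fact that lower semicontinuous functions always possess a minimum on a compact set.'' Your argument is exactly that fact made explicit in the present setting, including the observation that graph compactness guarantees $\Omega\in\O$.
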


\section{\label{sec:Domains}$ C^{k,\alpha} $-Admissible Domains via Defomation Maps} \label{Sec: form_design}

Now, we have to concretize the terms and results introduced in the last section and adjust them to our present problem. Among others, we have to define the family of admissible domains $\Os$ and the subfamily $\O$. Therefore, we consider $C^{k,\alpha}$-admissible domains\footnote{For explanations see Section 7 of \cite{Agm64} or Sections 4 and 6 of \cite{GilbTrud}} on which we later impose the boundary value problems of linear elasticity and heat equation that were introduced in Section \ref{sec:PDE}. We will see that $C^{k,\alpha}$-domains are very useful in relation to compactness properties. 

\begin{defn}[{\bf Basic-Design}] \label{Def: basic-design} Let $\Omega_{0} \subset \R{3}$ be a $C^{k,\alpha}$-domain for some  $\alpha \in (0,1]$. Further let $B:=\Umg{r}{z} \subset \Omega_{0}, \, z \in \text{int}(\hat{\Omega})$ be a ball in its interior having positive distance $D:=\dist(\Umg{r}{z}, \partial \hat{\Omega})>0$ from the boundary. Then we define the \textit{basic-design} by $\Omega_{b}:=\Omega_{0} \setminus B $.
\end{defn}

With regard to the following sections we state here that $\Omega_{b}$ in particular has a $C^{0,1}$-boundary and therefor satisfies a uniform cone property as explained in \cite{Chen75}. 
\pagebreak 
\begin{defn}[{ \bf $ C^{k,\alpha} $-Diffeomorphisms}]$  $ \vspace*{-0.2cm}
\begin{itemize}
\item[(i)]A $C^{k,\alpha}$-diffeomorphisms on a bounded domain $\Omega \subset \R{n},\, n\in \N{}$ is a one to one mapping $\Phi:\Omega \to \Omega',\, \Omega' \subset \R{n}$ such that $\Phi \in \Ck{k,\alpha}{\Omega}{n}$ and $\Phi^{-1}\in \Ck{k,\alpha}{\Omega'}{n}$. 
\item[(ii)]
The set of $C^{k,\alpha}$-diffeomorphisms on $\Omega \subset \R{n}$ will be denoted by $\left[\D{k,\alpha}{\Omega,\Omega'}\right]^n$ or $\left[\D{k,\alpha}{\Omega}\right]^n$ if $\Phi:\Omega \to \Omega$ \footnotemark.

\end{itemize}\end{defn} 

Let $\Omega_{b}$ be defined as declared in the upper definition. Applying only uniformly bounded Diffeomorphisms $ \Phi $, by means of $ \Norm{\Phi}{k,\alpha;\Oext}\leq K $ for some constant K, there is a Ball $ \Umg{R}{z}:=\Oext, \, R>r>0 $ containing all $ \overline{\Phi( \Omega_{b})} $.  

\begin{defn}[{\bf $C^{k,\alpha}$-Admissible Domains}] \label{Def: k-adm domains} 
In the present situation the elements of 
\[ 
\Uad_{k,\alpha}:=\left\lbrace \Phi \in \left[\D{k,\alpha}{\overline{\Oext}}\right]^3 \, \Big\vert \, \Norm{\Phi}{\Ck{k,\alpha}{\Oext}{3}}\leq K,\, \Norm{\Phi^{-1}}{\Ck{k,\alpha}{\Oext}{3}}\leq K \right\rbrace  
\] 
\noindent are called \textit{design-variables}. In a natural way, this induces the set of admissible shapes
\[ \O_{k,\alpha}:= \left\lbrace \Phi(\Omega_{b}) \, \vert \, \Phi \in \Uad_{k,\alpha} \right\rbrace \] 
assigned to $ \Omega_{b} $.
\footnotetext{ Note that $ \Phi \in  \Ck{k,\alpha}{\overline{\Omega}}{n} $ if $ \Phi \in   \Ck{k,\alpha}{\Omega}{n} $ has a $k,\alpha$-regular extension to $\overline{\Omega} $. }
\end{defn}
 
\begin{lem}\label{Lemma: compactnes Uad} 
$ \Uad_{k,\alpha} $ is compact in the Banach space $ \left(\Ck{k,\alpha'}{\overline{\Oext}}{3},\Norm{.}{\Ck{k,\alpha'}{\Oext}{3}} \right) $ for any  $ 0\leq \alpha' <\alpha $ and $ k \in \N{}$.
\end{lem}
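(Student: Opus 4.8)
The plan is to obtain compactness of $\Uad_{k,\alpha}$ in the weaker norm $\Norm{\cdot}{\Ck{k,\alpha'}{\Oext}{3}}$ by combining a uniform bound (which gives sequential precompactness via Arzelà--Ascoli) with closedness of the set under $C^{k,\alpha'}$-limits. Since the target space is a Banach space, it suffices to show that every sequence $\seq{\Phi_n}$ in $\Uad_{k,\alpha}$ has a subsequence converging in $\Ck{k,\alpha'}{\overline{\Oext}}{3}$ to some limit that again lies in $\Uad_{k,\alpha}$.

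\textbf{Step 1: Precompactness in $C^{k,\alpha'}$.} Every $\Phi_n$ satisfies $\Norm{\Phi_n}{\Ck{k,\alpha}{\Oext}{3}}\leq K$, so all derivatives $D^\beta\Phi_n$ with $|\beta|\leq k$ are uniformly bounded and uniformly $\alpha$-Hölder on the compact set $\overline{\Oext}$. This makes the family $\{D^\beta\Phi_n : |\beta|\le k\}$ uniformly bounded and equicontinuous, so by the Arzelà--Ascoli theorem for Hölder spaces (see \cite[Section 8]{Alt} or \cite{GilbTrud}), a diagonal subsequence (not relabelled) converges in $\Ck{k,\alpha'}{\overline{\Oext}}{3}$ for every $\alpha'<\alpha$ to some limit $\Phi\in\Ck{k,\alpha'}{\overline{\Oext}}{3}$; moreover the $C^{k,\alpha}$-seminorms are lower semicontinuous under this convergence, so $\Norm{\Phi}{\Ck{k,\alpha}{\Oext}{3}}\leq K$. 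The same argument applied to the inverses $\Phi_n^{-1}$ (which are likewise uniformly bounded in $\Ck{k,\alpha}{\Oext}{3}$ by the definition of $\Uad_{k,\alpha}$) yields, after passing to a further subsequence, $\Phi_n^{-1}\to\Psi$ in $\Ck{k,\alpha'}{\overline{\Oext}}{3}$ with $\Norm{\Psi}{\Ck{k,\alpha}{\Oext}{3}}\leq K$.

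\textbf{Step 2: Identifying the limit as a diffeomorphism.} It remains to check that $\Phi$ is a $C^{k,\alpha}$-diffeomorphism of $\overline{\Oext}$ onto itself with $\Phi^{-1}=\Psi$. From $\Phi_n\circ\Phi_n^{-1}=\Phi_n^{-1}\circ\Phi_n=\mathrm{id}$ and the $C^1$-convergence $\Phi_n\to\Phi$, $\Phi_n^{-1}\to\Psi$ (uniform convergence of the maps together with uniform convergence of their first derivatives), one passes to the limit to obtain $\Phi\circ\Psi=\Psi\circ\Phi=\mathrm{id}$ on $\overline{\Oext}$; hence $\Phi$ is a bijection of $\overline{\Oext}$ with inverse $\Psi$, and both $\Phi$ and $\Phi^{-1}=\Psi$ lie in $\Ck{k,\alpha}{\overline{\Oext}}{3}$ with norms bounded by $K$, so $\Phi\in\Uad_{k,\alpha}$. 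Thus $\Uad_{k,\alpha}$ is sequentially compact, hence compact, in $\left(\Ck{k,\alpha'}{\overline{\Oext}}{3},\Norm{\cdot}{\Ck{k,\alpha'}{\Oext}{3}}\right)$.

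\textbf{Main obstacle.} The routine parts are the Arzelà--Ascoli argument and the lower semicontinuity of the Hölder seminorms; the genuinely delicate point is Step 2, namely ensuring that the limit map is non-degenerate and that its inverse is exactly the limit $\Psi$ of the $\Phi_n^{-1}$ rather than merely a one-sided inverse on part of the set. This is where the uniform two-sided bound built into the definition of $\Uad_{k,\alpha}$ (bounds on both $\Phi_n$ and $\Phi_n^{-1}$) is essential: without a uniform bound on the inverses one could only extract $\Phi$ and would have no control preventing the Jacobian $\det D\Phi$ from vanishing in the limit. With that bound in hand, the identities $\Phi_n\circ\Phi_n^{-1}=\mathrm{id}$ pass to the limit cleanly because composition is continuous for uniformly convergent sequences of uniformly Lipschitz maps, closing the argument.
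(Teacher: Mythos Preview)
Your proof is correct and follows essentially the same route as the paper's: precompactness of the $C^{k,\alpha}$-ball in $C^{k,\alpha'}$ via Arzel\`a--Ascoli, preservation of the $C^{k,\alpha}$-bound for the limit (you phrase this as lower semicontinuity of the H\"older seminorm, the paper spells out the triangle-inequality estimate explicitly), and then passing the identity $\Phi_n\circ\Phi_n^{-1}=\mathrm{id}$ to the limit to recover the diffeomorphism property. The only organisational difference is that the paper frames the argument as ``$\Uad_{k,\alpha}$ is a closed subset of the compact ball $S$'' whereas you argue sequential compactness directly; the substance is the same, and your discussion of why the two-sided bound on $\Phi_n^{-1}$ is needed is in fact more explicit than the paper's.
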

 
\begin{proof} 
 The set $S:= \left\{ \Phi \in \Ck{k,\alpha}{\overline{\Oext}}{3} \, \Big\vert \, \Norm{\Phi}{\Ck{k,\alpha}{\Oext}{3}} \leq K \right\}$ is precompact in the Banach space $ \left(\Ck{k,\alpha'}{\overline{\Oext}}{3}, \Norm{.}{\Ck{k,\alpha'}{\Oext}{3}}\right) $ as stated in \cite[6.36]{RO}.
 \\[1ex] 
 Now let $ \Phi^{\ast} $ be the limit of a convergent sequence $ \seq{\Phi_{n}} $ in $ S $ regarding $ \Norm{.}{\Ck{k,\alpha'}{\Oext}{3}} $. We use the abbreviation $ \Norm{.}{k,\alpha'} $ for the $ \left[C^{k,\alpha'}\right]^3 $-Norms on $ \Oext $ and define  $ K_{n}:=\Norm{\Phi_{n}}{k} $, and $ \tilde{K_{n}}:= \sup_{x,y \in \Oext \atop \vert \beta \vert=k} \frac{\left|\abl{\beta}{\Phi_{n}}(x)-\abl{\beta} {\Phi_{n}}(y)\right|}{|x-y|^{\alpha}}$. From the inequality
 \[ 
 \Norm{\Phi_{n}}{k,\alpha}
 =\Norm{\Phi_{n}}{k}+\sup_{x,y \in \Oext \atop \vert \beta \vert=k} \frac{\left| \abl{\beta}{\Phi_{n}}(x)-\abl{\beta}{\Phi_{n}}(y)\right|}{|x-y|^{\alpha}}
 \leq K \]
 and the $ C^{k,\alpha'} $-convergence we conclude that $ \tilde{K}_{n}\leq K-K_{n} $ for all $ n\in \N{} $, as well as $ K_{n} \xrightarrow{n\to \infty} K^{\ast}=\Norm{\Phi^{\ast}}{k} \leq K$. 
 What remains to be shown is $ \left|\abl{\beta}{\Phi^{\ast}}(x)-\abl{\beta} {\Phi^{\ast}}(y)\right|  \leq (K-K^{\ast})|x-y|^{\alpha}  $ holds for every $ |\beta |=k $, $ x,y\in \Oext $. We apply triangle inequality to the left hand term and receive
 \begin{equation*}
  \begin{split}
   \left|\abl{\beta}{\Phi^{\ast}}(x)-\abl{\beta} {\Phi^{\ast}}(y)\right| 
   \leq    \left|\abl{\beta}{\Phi^{\ast}}(x)-\abl{\beta} {\Phi_{n}}(x)\right|
   		   + &\left|\abl{\beta}{\Phi_{n}}(x)-\abl{\beta} {\Phi_{n}}(y)\right| \\
   		   &~~~~~~~~~+ \left|\abl{\beta}{\Phi_{n}}(y)-\abl{\beta} {\Phi^{\ast}}(y)\right|.  
  \end{split}
\end{equation*}
Because of the uniform convergence of the $ k $-th order derivatives, the first and the third term on the right hand side become zero as $ n\to \infty $. The second one can be estimated by $ \left|\abl{\beta}{\Phi_{n}}(x)-\abl{\beta} {\Phi_{n}}(y)\right| \leq (K-K_{n})|x-y|^{\alpha} $ what leads to $ \left|\abl{\beta}{\Phi^{\ast}}(x)-\abl{\beta} {\Phi^{\ast}}(y)\right| \leq (K-K^{\ast})|x-y|^{\alpha} $ when passing to the limit. Therefore $ S $ closed. The letter is also true for $ S^{-1}:=\left\{ \Phi^{-1} \, \vert \, \Phi \in S \right\} $. Since $ \Phi_{n}\circ \Phi_{n}^{-1}=\Phi_{n}^{-1}\circ \Phi_{n}=id $ for all $ n\in \N{} $ and $ \Phi_{n} \in \Uad_{k,\alpha} $ it holds that $ \lim\limits_{n \to \infty} \Phi_{n}^{-1}=\Phi^{\ast} $ if $ \Phi^{*}=\lim\limits_{n \to \infty} \Phi_{n} $. Thus $ \Uad_{k,\alpha}$ is a closed subset of the compact set $ S $ and the statement holds. \hfill $\square$
\end{proof}

\noindent In this context it is obvious to define convergence of sets through $ \left[C^{k,\alpha'}\right]^3 $-convergence of admissible functions.

\begin{defn}[{\bf $ C^{k,\alpha'} $-Convergence of Sets}] 
Let $ 0\leq \alpha' \leq \alpha $ be fixed. Then $ \Omega_{n} \overset{\O}{\longrightarrow} \Omega, \, n \to \infty $ iff there is a sequence $ \seq{\Phi_{n}} \subset \Uad_{k,\alpha} $, $ \Phi \in \Uad_{k,\alpha} $ where $ \Phi_{n}(\Omega_{b})=\Omega_{n} \, \forall n\in \N{}$, $ \Phi(\Omega_{b})=\Omega $ and  $ \Phi_{n} \to \Phi, \, n \to \infty $ in $ \Ck{k,\alpha'}{\overline{\Oext}}{3} $. The set $ \Omega_{b} $ shall be defined as in Definition \ref{Def: basic-design}.
\end{defn}

\begin{rem}[Volume Constraints]\label{rem:VolumeConstraints}
One can easily restrict the set of admissible domains with geometric constraints. Let us take the volume constraint as an example and let $V=\int_{\Omega_b}\, dx$ be the volume of the baseline design  $\Omega_b$. We then consider only those deformation maps $\Phi\in U_{k,\alpha}^{ad}$ that preserve the volume of $\Omega_b$, i.e.
\begin{equation}\label{eq: vol Constraint}
V=\int_{\Omega_b}\, dx=\int_{\Phi(\Omega_b)} \, dx=\int_{\Omega_b} \left|\det(\nabla \Phi)\right|\,dx
\end{equation}
Let $U_{k,\alpha,V}^{ad}$ be the set of all $\Phi\in U_{k,\alpha}^{ad}$ that fulfil \eqref{eq: vol Constraint} and let . Form this equation it is clear that $U_{k,\alpha,V}^{ad}$ is closed in $U_{k,\alpha}^{ad}$ (if $k\geq 1$) and therefore compact in the $C^{k,\alpha'}$-topology for $\alpha'<\alpha$.

Taking this into account, we see that all arguments of this article are equally valid for the set of admissible shapes ${\cal O}_{k,\alpha,V}=\{\Phi(\Omega_b):\Phi\in U_{k,\alpha,V}^{ad}\}$ with volume constraint $V$.

\end{rem}

\section{\label{sec:UnifSchauder}Uniform Schauder Estimates}\label{Sec: uniform schauder estimates}

Recall the mixed problem \eqref{PTO} in Section \ref{Sec: PDE constraints} and the definitions of design variables and admissible shapes in Section \ref{Sec: form_design}. 

At the end of this paper we want to apply Theorem \ref{Thm: existence of optimal shapes}: In this Section, suitable assumptions on $ \O $ and on the appearing functions in \eqref{PO} and \eqref{T} will be presented, such that the requirement of unique solubility of \eqref{PTO} is satisfied. This ensures the existence of the graph $ \mathcal{G}=\{ (\Omega,T(\Omega),u_{T}(\Omega)) \, \vert \, \Omega \in \O \} $, as claimed. We will also see, that under appropriate assumptions the resulting solutions $ u_{T} $ and $ T $ are H\"older functions, what ensures a proper definition of convergence of solution sequences $ \seq{u(\Omega_{n})} $ and $ \seq{T(\Omega_{n})} $.
The crucial step, when showing compactness of $ \mathcal{G} $ (see Lemma \ref{Lemma: compactness of the graph}) in the sense of  \eqref{Def: Gcomp}, is the application of Schauder estimates to proof the solution's uniform boundedness with respect to $ \O $. This gives us the possibility to apply Lemma \ref{Lemma: compacness of c_k_alpha} and leads to the desired conclusion. Finally we will show lower-semicontinuity in Lemma \ref{Lemma: compactness of the graph} for a very general class of cost functionals containing ours.

\subsection{Schauder estimates for Linear Elasticity Equation}\label{Subsec: solutions for elasticity equation}

We start with an review of regularity results for the disjoint displacement-traction problem of linear elasticity presented in \cite{Hanno} and point out the main characteristics of the shape's geometry that lead to the uniformity of certain estimates. Since $C^{4}$-regularity is needed in Theorems 6.3-5 and 6.3-6 in \cite{Cia1988} which are used in the proof of Theorem 5.1 et seq. in \cite{Hanno}. Accordingly, we set $ \Uad:=\Uad_{4,\alpha} $, $ \alpha \in (0,1) $ for the set of feasible design-variables and  $ \O:=\O_{4,\alpha} $ for the set of  admissible shapes.


\begin{lem} \label{Lemma: uniform hemisphere condition}
Each $ \Omega \in \O $ satisfies a hemisphere property where the corresponding hemisphere transformations are of class $ C^{4,\alpha'} $, $ \alpha' \in [0,\alpha] $ and have a uniform bound $ \mathcal{K} $ with respect to $ \O $.
\end{lem}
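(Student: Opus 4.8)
The plan is to unpack the definition of the hemisphere property (equivalently, the boundary regularity condition of a $C^{4,\alpha}$-domain in the sense of \cite{Agm64,GilbTrud}) for the baseline design $\Omega_b=\Omega_0\setminus B$, and then to transport it through an arbitrary admissible deformation $\Phi\in U^{ad}_{4,\alpha}$, tracking how the norms of the local boundary charts behave under composition. Concretely: since $\Omega_0$ is a fixed $C^{4,\alpha}$-domain and $B=\Umg{r}{z}$ is a ball with positive distance $D$ from $\partial\Omega_0$, the baseline $\Omega_b$ is itself a $C^{4,\alpha}$-domain; hence there is a finite atlas of boundary points together with radii and hemisphere transformations $\psi_p$ (mapping a neighbourhood of $p\in\partial\Omega_b$ to a half-ball, flattening the boundary) whose $C^{4,\alpha}$-norms and whose inverses' norms are bounded by some fixed constant $\mathcal{K}_b$ depending only on $\Omega_b$. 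This is the input data, and it is genuinely uniform because it concerns a single fixed set.

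Next I would observe that for $\Omega=\Phi(\Omega_b)$ the map $\psi_p\circ\Phi^{-1}$ is a hemisphere transformation for $\Omega$ near the boundary point $\Phi(p)$: it is one-to-one, it is $C^{4,\alpha'}$ for every $\alpha'\le\alpha$, it flattens $\partial\Omega$ locally, and its inverse is $\Phi\circ\psi_p^{-1}$. The core estimate is then the chain-rule/Faà di Bruno bound: the $C^{4,\alpha'}(\Oext)$-norm of a composition $g\circ h$ is controlled by a polynomial expression in $\Norm{g}{C^{4,\alpha'}}$ and $\Norm{h}{C^{4,\alpha'}}$ (using that $C^{k,\alpha}$ composed with $C^{k,\alpha}$ stays in $C^{k,\alpha}$, with the higher-order terms absorbing products of lower derivatives; one also needs a lower bound on $|\det \nabla\Phi|$, which follows from $\Norm{\Phi^{-1}}{C^{4,\alpha}}\le K$). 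Since $\Norm{\Phi}{4,\alpha}\le K$ and $\Norm{\Phi^{-1}}{4,\alpha}\le K$ uniformly over $U^{ad}_{4,\alpha}$, and $\Norm{\psi_p}{4,\alpha}\le\mathcal{K}_b$ for each of the finitely many charts, one gets a uniform bound $\mathcal{K}=\mathcal{K}(K,\mathcal{K}_b,\alpha)$ on the $C^{4,\alpha'}$-norms of all the hemisphere transformations $\psi_p\circ\Phi^{-1}$ and their inverses, for all $\Omega\in\O$ simultaneously. One must also check that the radii of the half-balls in the definition can be taken uniform: a lower bound follows because $\Phi$ is bi-Lipschitz with constant at most $K$, so it cannot shrink a fixed coordinate neighbourhood by more than a factor $K$.

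The main obstacle I expect is purely bookkeeping rather than conceptual: writing down the composition estimate for $C^{4,\alpha'}$ norms cleanly — the fourth derivative of $g\circ h$ is a sum of terms like $(D^j g)(h)\cdot \prod D^{i_\ell} h$, and the Hölder seminorm of such a product needs the product rule for Hölder seminorms together with $L^\infty$ bounds on all lower-order derivatives, which is exactly why one needs $C^{4,\alpha}$ control (not just $C^4$) on both $\Phi$ and the charts. A secondary subtlety is the corner/edge where $\partial B$ was removed — but by Definition~\ref{Def: basic-design} that surgery happens at distance $D>0$ from $\partial\Omega_0$, so $\partial\Omega_b$ is a disjoint union of the smooth sphere $\partial B$ and the smooth surface $\partial\Omega_0$, with no new corners, and the atlas construction is unaffected. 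Finally, since the whole argument only uses $\Norm{\Phi^{\pm1}}{4,\alpha}\le K$, it is clear the constant $\mathcal{K}$ depends on nothing but $K$, the fixed geometry of $\Omega_b$, and $\alpha$, which is precisely the uniformity claimed.
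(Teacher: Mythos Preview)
Your proposal is correct and follows essentially the same approach as the paper: fix a finite atlas of hemisphere transformations for the baseline $\Omega_b$, transport them to $\Omega=\Phi(\Omega_b)$ by composition with $\Phi$ (the paper writes $\mathcal{T}_{\Phi(z_0)}=\Phi\circ\mathcal{T}_{z_0}$, which is just the inverse of your $\psi_p\circ\Phi^{-1}$), invoke the chain rule for $C^{4,\alpha'}$-norms together with the uniform bounds $\Norm{\Phi^{\pm1}}{4,\alpha}\le K$, and use the bi-Lipschitz estimate (the paper's inequality~(\ref{eq: (6.29) Gilbarg/Trudinger})) to get a uniform lower bound on the radii. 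Your treatment is in fact more explicit about the composition estimate and the absence of corners in $\partial\Omega_b$ than the paper's own argument.
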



\begin{proof}

First we note that $ \Omega_{b} $ satisfies a hemisphere condition, see Definition \ref{Def: hemisphere transformation}. This can be proved analogously to Lemma 5.4 in \cite{Hanno} because $ \Omega_{b} $ is a $ C^{4,\alpha} $ domain and thus it's compact boundary can be parametrized by a finite family of uniformly bounded $ C^{4,\alpha} $-functions in two variables. The resulting hemisphere transformations depend on the point $ z_{0}\in \Omega_{b} $ lying in a sufficient small distance $ 0<d $ of the boundary, that depends on the curvature of the boundary.


Since every $ \Phi \in \Uad $ is a $ C^{4,\alpha} $-diffeomorphism, the compositions $ \T_{\Phi(z_{0})}:=\Phi \circ \mathcal{T}_{z_{0}} $ are again hemisphere transformations: The functions $ \Phi \in \Uad $ are one to one mappings from $ \Omega_{b} $ to $ \Phi(\Omega_{b}) $ that are uniformly bounded. Therefore we have a constant $\mathcal{ K}>0 $, depending on $ \Omega_{b} $ but not on the choice of $ \Phi $, where 
 
 \begin{equation}\label{eq: (6.29) Gilbarg/Trudinger}
 \mathcal{ K}^{-1}\Norm{x-y}{\R{3}}  \leq \Norm{\Phi(x)-\Phi(y)}{\R{3}}\leq \mathcal{ K} \Norm{x-y}{\R{3}}
 \end{equation}

\noindent for all $ x,y \in \Omega_{b} $, see (6.29) in \cite{GilbTrud}. As a consequence we set $ d'=K^{-1}d $ (uniformly for all $\Omega \in \O$) and construct the new neighborhood $ U' $ as a proper extension of $ \Phi(U\cap\Omega_{b}) $ beyond the boundary.
At last we apply chain rule and see that the transformations $\T_{\Phi(z_{0})} $ are even uniformly bounded in the $ C^{4,\alpha'} $-norms, $ 0\leq \alpha' \leq \alpha $ with respect to $ \O $ because the functions $ \Phi $ and $ \T_{z_0} $ are. This is also true for the inverse functions by analogous arguments.
\end{proof}

\begin{exm}
Let $ O \subset \R{3} $ be a $ C^{1} $-domain where $ \psi: \Umg{a}{0}\subset \R{2}\to \R{}$ describes a part of the upper boundary. Then $ \T:\Sigma_{a}\subset \R{3} \to \tilde{\Sigma} \subset\Omega  $ where
\begin{equation}\label{eq: hemisphere Transformations Omega_b }
\T(z_{1},z_{2},z_{3}):=
\begin{pmatrix}
z_{1} \\
z_{2}\\
\psi (z_{1},z_{2})-z_{3}
\end{pmatrix},
\end{equation}
defines a one to one mapping from the half ball $ \Sigma_{a} $ with radius $ a $ at $ 0 \in \R{3} $ to $\tilde{\Sigma}_{a}$
\end{exm}

Due to it's construction, every shape $ \Omega \in \O $ has a Lipschitz-boundary and the associated Lipschitz constant can be chosen uniformly, what is proofed to be equivalent to a uniform cone property in \cite{Chen75}. Hence, the following Lemma is applicable:

\begin{lem} \cite[Lemma 5.5]{Hanno} \label{Lemma: uniform cone lemma} 
Let $ \mathcal{M} $ be a set of bounded domains in $ \R{n}$ with a uniform cone property and let $ \Omega \in \mathcal{M} $. Then, for every $ \varepsilon>0 $ there is a constant $ C(\varepsilon)>0 $ uniform with respect to $ \mathcal{M} $, such that  $ \Norm{v}{\C{0}{\Omega}} \leq \varepsilon \Norm{v}{\C{1}{\Omega}} + C(\varepsilon)\int_{\Omega}\vert v \vert dx  $ holds for all $ v \in \C{1}{\Omega} $.
\end{lem}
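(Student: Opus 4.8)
The statement to be proved is Lemma \ref{Lemma: uniform cone lemma}, the interpolation inequality
\[
\Norm{v}{\C{0}{\Omega}} \leq \varepsilon \Norm{v}{\C{1}{\Omega}} + C(\varepsilon)\int_{\Omega}\vert v \vert\,dx,
\]
uniform over a family $\mathcal M$ of bounded domains sharing a uniform cone property. Since this is quoted as \cite[Lemma 5.5]{Hanno}, the plan is essentially to reproduce the standard compactness-free interpolation argument and track the uniformity. First I would fix a single cone $\mathscr C$ (aperture and height) that witnesses the uniform cone property for every $\Omega\in\mathcal M$: by definition each $x\in\Omega$ is the apex of a congruent copy $\mathscr C_x\subset\Omega$. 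The one-dimensional prototype is the statement that for $f\in C^1([0,h])$ and any $\delta\in(0,h]$ one has $|f(0)|\le \delta\,\Norm{f'}{\infty}+\tfrac1\delta\int_0^\delta|f|\,dt$, obtained by writing $f(0)=f(s)-\int_0^s f'(t)\,dt$ and averaging $s$ over $[0,\delta]$.

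The key steps, in order: (1) For $x\in\Omega$ and a direction inside the cone, integrate this 1-D estimate; then average over the solid angle of $\mathscr C_x$ and over a radial cutoff of length $\delta\le h$. Because the cone $\mathscr C_x$ at every point is a rigid motion of the fixed reference cone $\mathscr C$, the solid-angle measure and the Jacobian factors relating the cone's ``polar'' coordinates to Lebesgue measure on $\R n$ are \emph{the same} for every $x$ and every $\Omega\in\mathcal M$ — this is exactly where uniformity enters. (2) This produces a bound $|v(x)|\le \varepsilon\,\Norm{v}{\C1{\Omega}} + C(\varepsilon)\int_{\mathscr C_x}|v|\,dx$ with $\varepsilon\sim\delta$ and $C(\varepsilon)\sim\delta^{-n}$ (up to a constant depending only on the fixed cone, not on $x$ or $\Omega$), where the first term absorbs the gradient contribution and the zeroth-order remainder from the averaging. (3) Since $\mathscr C_x\subseteq\Omega$, replace $\int_{\mathscr C_x}|v|\,dx$ by $\int_\Omega|v|\,dx$, and take the supremum over $x\in\Omega$ to get $\Norm{v}{\C0\Omega}\le\varepsilon\Norm{v}{\C1\Omega}+C(\varepsilon)\int_\Omega|v|\,dx$. (4) Finally relabel: given the target $\varepsilon>0$ in the statement, choose $\delta$ so small that the coefficient of $\Norm{v}{\C1\Omega}$ is at most $\varepsilon$, which fixes $C(\varepsilon)$; all constants produced depend only on $\varepsilon$, $n$, and the fixed reference cone, hence are uniform over $\mathcal M$.

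The main obstacle — though it is more bookkeeping than genuine difficulty — is keeping the constants honest: one must verify that in passing from the 1-D estimate to the $n$-dimensional one via integration in spherical-type coordinates adapted to the cone, no constant creeps in that secretly depends on $\Omega$ (e.g.\ through $\diam\Omega$ or the local boundary geometry). The point is that the cone is \emph{fixed} across the family, so the coordinate change and the resulting measure-theoretic constants are universal; the only ``$\Omega$-dependence'' left, $\int_\Omega|v|\,dx$, is the quantity explicitly allowed on the right-hand side. A secondary minor point is the exact form of the dependence $C(\varepsilon)=C(\varepsilon,n,\mathscr C)$, which one reads off from $\delta\asymp\varepsilon$ and $C(\varepsilon)\asymp\varepsilon^{-n}$; since only existence of such a constant is claimed, no sharp tracking is needed. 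I would therefore present steps (1)--(4) compactly and simply cite \cite{Hanno} (and, for the underlying interpolation inequality on cone domains, standard references such as \cite{GilbTrud}) for the routine estimates.
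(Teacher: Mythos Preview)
Your proof sketch is correct and is the standard argument for this type of interpolation inequality: the one-dimensional estimate along rays, averaged over the fixed reference cone $\mathscr C$, yields constants depending only on $n$, $\varepsilon$, and the cone parameters, which is precisely the uniformity over $\mathcal M$ that is claimed.

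Note, however, that the paper does not actually prove this lemma at all --- it merely quotes the statement with the citation \cite[Lemma 5.5]{Hanno} and moves on. So there is no ``paper's own proof'' to compare against; you are supplying a proof where the paper supplies only a reference. Your argument is the expected one (it is essentially the proof behind the cited result, and is in the spirit of the interpolation Lemma~6.34 in \cite{GilbTrud} adapted to track uniformity via the fixed cone), so there is nothing further to reconcile.
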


One of the complications of setting up a realistic shape optimization problem is the definition of the surface force density $g$. While the volume force densities are  easily defined as gravitational or centrifugal loads, the surface load $g$ generally depends on the shape $\Omega$ in a non trivial way. Often, $g=g(\Omega)$ will be defined by the static pressure of a fluid surrounding $\Omega$. In this paper we do not intend to give a solution to this complicated problem. The following definition sets a framework that is capable to deal with such effects.

\begin{defn}[Admissible Surface Force Model]
\label{Def: SurfaceForceModel}
    Let $ G(\mathcal{O},\phi):=\dot\bigcup_{\Omega\in\mathcal{O}} \Ck{2,\phi}{\RO}{3}$ be the vector bundle with fiber $\Ck{2,\phi}{\RO}{3}$ over $\Omega\in\mathcal{O}$. 
    We define the space of admissible surface force models as a space of sections in with uniform bound on the fibre norm, $ G^{ad}(\mathcal{O},\phi):=\lbrace \bar{g}:\O \to G(\Omega,\phi)\mbox{ s.t. }\bar g(\Omega)\in \Ck{2,\phi}{\RO}{3}\mbox{ and } \exists 0<k_1<\infty \mbox{ s.t. } \Norm{\bar g(\Omega)}{\Ck{2,\phi}{\RO}{3}}\leq k_{1} ~\forall\Omega\in{\cal O}\rbrace $.   
\end{defn}
With every $ \bar{g} \in G^{ad} $ we can thus associate surface force boundary conditions $ \bar{g}(\Omega)$ to any set $\Omega\in\mathcal{O}$ that have a uniform common bound on their $\Ck{2,\phi}{\RO}{3}$ norm. The following example has been used in \cite{Hanno}:

\begin{exm}
Let $ g^{ext}\in \Ck{2,\phi}{\Oext}{3} $ be an arbitrary mapping. Then we can define $ \bar{g}\in G^{\rm ad}(\mathcal{O},\phi)$ by $ \bar{g}(\Omega):=g^{ext}\restriction_{\RO} $ with $k_1=\Norm{g^{ext}}{\Ck{2,\phi}{\RO}{3}}$.
\end{exm}

\begin{thm} \label{Thm: schauder estimates u} \cite[Theorem 5.6, 5.7]{Hanno} 
Recall the PDE \eqref{P}, where $ \Omega=\Phi(\Omega_{b}) $ for some $ \Phi \in \Uad $.
\begin{itemize}
\item[(i)] Let $ f\in \Ck{1,\phi}{\overline{\Oext}}{3} $, $ g\in \Ck{2,\phi}{\overline{\Omega}}{3}$ \footnote{$\C{k,\alpha}{\overline{\Omega}}$ and $\C{k,\alpha}{\Omega}$-functions can be identified and therefore replaced by each other, see \cite{GilbTrud} }  for some $ \phi\in (0,1) $. Then there exists unique solution $ u\in \Ck{3,\phi}{\overline{\Omega}}{3} $ that satisfies 
\begin{equation}\label{eq: schauder inequality u}
\Norm{u}{\Ck{3,\varphi}{\Omega}{3}} \leq C \bigl(  \Norm{f}{\Ck{1,\phi}{\Omega}{3}} + \Norm{g}{\Ck{2,\phi}{\RO}{3}} +  \Norm{u}{\Ck{0}{\Omega}{3}}  \bigr). 
\end{equation}

\noindent for any $ \varphi \in (0,\phi) $ and some positive constant $ C $ independent from $ \Omega \in \O $.

\item[(ii)] Let $ f\in \Ck{2,\phi}{\overline{\Oext}}{3} $. Moreover, let $ g=\bar{g}(\Omega) $ be the associated mapping to some $ \bar{g} \in G^{ad} $. Then, by means of Lemma \ref{Lemma: uniform cone lemma} the term $ \Norm{u}{\Ck{0}{\Omega}{3}} $ can be replaced by $ \int_{\Omega} \vert u \vert \, dx $ and even  
\begin{equation}
\Norm{u}{\Ck{3,\varphi}{\Omega}{3}} \leq C^{el}
\end{equation} 
holds for any $ \varphi \in (0,\phi) $ and a constant $ C^{el} $ which can be chosen uniformly w.r.t. $ \O $.
\end{itemize}
\end{thm}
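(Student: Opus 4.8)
\begin{pr}
The statement is quoted from \cite[Theorems 5.6, 5.7]{Hanno}, so the plan is not to redo the full elliptic machinery but to check that the argument there applies \emph{uniformly} over the enlarged family $ \O=\O_{4,\alpha} $ introduced in Section \ref{Sec: form_design}. The only place where the geometry of $ \Omega $ enters the regularity estimates is through the boundary charts, and the uniform control needed there is precisely what Lemma \ref{Lemma: uniform hemisphere condition} provides.

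First I would recall the local Schauder theory for the Lamé system: $ u\mapsto\Div(\se(u)) $ is a strongly elliptic constant-coefficient system, and on the fixed disjoint pieces $ \RO_{D},\RO_{N} $ the mixed displacement--traction conditions satisfy the complementing condition, so the interior and boundary estimates of Agmon--Douglis--Nirenberg \cite{Agm59,Agm64}, in the form used in \cite{GilbTrud} and \cite{Cia1988}, are available. On an interior ball one has $ \Norm{u}{\Ck{3,\varphi}{B'}{3}}\leq C\bigl(\Norm{f}{\Ck{1,\phi}{B}{3}}+\Norm{u}{\C{0}{B}}\bigr) $ with $ C $ depending only on $ \lambda,\mu $ and the radii. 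At a boundary point one flattens $ \RO $ by a hemisphere transformation $ \T_{\Phi(z_{0})}=\Phi\circ\T_{z_{0}} $; under this $ C^{4,\alpha'} $-change of variables the operator becomes a variable-coefficient strongly elliptic system whose coefficients, and those of the transformed boundary operator, are bounded in the relevant Hölder norms by a quantity depending only on $ \Norm{\T_{\Phi(z_{0})}}{4,\alpha'} $ and $ \Norm{\T_{\Phi(z_{0})}^{-1}}{4,\alpha'} $, while its ellipticity constants are bounded below in the same way. By Lemma \ref{Lemma: uniform hemisphere condition} all these quantities are controlled by the single constant $ \mathcal{K} $, uniformly in $ \Omega\in\O $, so the boundary Schauder constant is uniform over $ \O $.

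To prove (i) I would then patch the local estimates: since each $ \Omega\in\O $ is $ \Phi(\Omega_{b}) $ and its hemisphere charts are obtained from $ \Phi $ composed with a fixed finite atlas of $ \Omega_{b} $, the number of charts, the uniform lower bound $ \mathcal{K}^{-1}d $ on their size and the finite overlap multiplicity are all independent of $ \Phi $, so a subordinate partition of unity with uniformly bounded derivatives lets one sum the interior and boundary estimates into \eqref{eq: schauder inequality u} with $ C $ uniform over $ \O $. Existence and uniqueness follow as in \cite{Hanno}: a weak solution is unique by Lax--Milgram using Korn's inequality with the clamping on $ \RO_{D} $, and the bootstrap to $ u\in\Ck{3,\phi}{\overline{\Omega}}{3} $ together with the estimate gives (i). For (ii), $ \O $ has a uniform cone property (cf.\ the discussion preceding Lemma \ref{Lemma: uniform cone lemma} and \cite{Chen75}), so Lemma \ref{Lemma: uniform cone lemma} supplies, for any $ \ee>0 $, a uniform $ C(\ee) $ with $ \Norm{u}{\C{0}{\Omega}}\leq\ee\Norm{u}{\C{1}{\Omega}}+C(\ee)\int_{\Omega}\vert u\vert\,dx $; inserting this into \eqref{eq: schauder inequality u}, bounding $ \Norm{u}{\C{1}{\Omega}}\leq\Norm{u}{\Ck{3,\varphi}{\Omega}{3}} $ and choosing $ \ee $ small enough to absorb that term on the left replaces $ \Norm{u}{\C{0}{\Omega}} $ by $ \int_{\Omega}\vert u\vert\,dx $. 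Finally I would bound the right-hand side uniformly: $ f $ is a fixed function on $ \overline{\Oext}\supset\overline{\Omega} $, so $ \Norm{f}{\Ck{1,\phi}{\Omega}{3}} $ is bounded uniformly in $ \Omega $; $ \Norm{g}{\Ck{2,\phi}{\RO}{3}}=\Norm{\bar g(\Omega)}{\Ck{2,\phi}{\RO}{3}}\leq k_{1} $ by Definition \ref{Def: SurfaceForceModel}; and $ \int_{\Omega}\vert u\vert\,dx\leq\vert\Omega\vert^{1/2}\Norm{u}{L^{2}(\Omega)}\leq C\bigl(\Norm{f}{L^{2}(\Omega)}+\Norm{g}{L^{2}(\RO)}\bigr) $ by the energy estimate (Korn, trace and Poincaré inequalities with constants uniform over $ \O $ thanks to the uniform cone property and the inclusion $ \Omega\subset\Oext $). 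Collecting these bounds yields $ \Norm{u}{\Ck{3,\varphi}{\Omega}{3}}\leq C^{el} $ with $ C^{el} $ uniform over $ \O $.

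The main obstacle is the claim of the second paragraph, namely that the boundary Schauder constant depends on $ \Omega $ only through the $ C^{4,\alpha'} $-norms of the hemisphere transformations; this requires following carefully, through the change of variables and the Agmon--Douglis--Nirenberg estimates, exactly which norms of the charts the constant involves. Once Lemma \ref{Lemma: uniform hemisphere condition} is in force this is precisely the content of \cite[Theorems 5.6, 5.7]{Hanno}, and at that point the proof reduces to that reference.
\end{pr}
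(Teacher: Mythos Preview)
Your proposal is correct and follows essentially the same approach as the paper: both reduce the statement to \cite[Theorems 5.6, 5.7]{Hanno} and identify that the only new work is verifying uniformity of the Schauder constants over the enlarged class $\O=\O_{4,\alpha}$, which is supplied by the uniform hemisphere bounds of Lemma \ref{Lemma: uniform hemisphere condition}. The paper's proof is terser---it simply names the two geometry-dependent inputs (uniform $C^{4,\alpha}$-bounds on the hemisphere transformations and a uniform lower bound $\delta^{\ast}$ on the chart radii, the latter deferred to Lemma \ref{Lemma: Schauder estimate T})---whereas you spell out more of the ADN and patching mechanics; but the argument is the same.
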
 

\begin{proof}
The proof is essentially the same as in  Theorem 5.6 in \cite{Hanno}. We only have to show that it also holds for the extended set of geometries $\mathcal{O}=\mathcal{O}_{k,\alpha}$ that is considered in this paper. Two aspects of the geometry definition are relevant for the uniform Schauder estimates: Uniform bounds $\kappa$ of hemisphere transformations that (locally) straighten out the boundary $\partial\Omega$ are build in our definition of admissible shapes $\mathcal{O}$.  In fact, let $\Phi_i :U_i\to \Sigma$ be a finite collection of $C^{4,\alpha}$-hemisphere transformations on $\Omega_b$ such that $U_i$ cover $\partial\Omega$. Obviously, the $C^{4,\alpha}$ norms of $\Phi_i$ and $\Phi_i^{-1}$ are uniformly bounded. However, for $\Omega=\Phi(\Omega)$ with $\Phi\in U^{\rm ad}_{k,\alpha}$, the same applies to the sets $U_i'=\Phi(U_i)$ with hemisphere transformations $\Phi_i'=\Phi_i\circ\Phi$. Clearly, the  $C^{4,\alpha}$-norms of these transformations an their inverses only depend on the related norms of the $\Phi_i$ and the constant  $K$ used in the definition of $U_{k,\alpha}^{ad}$.

We also need a uniform lower bound $ \delta^{\ast}>0$ of the radii of the balls on which the boundary is straightened by hemisphere transformations. This bound $ \delta^{\ast}$ can be constructed as shown in  Lemma  \ref{Lemma: Schauder estimate T} below.
\end{proof}

\subsection{Schauder Estimates for the Heat Equation} \label{Subsec: solutions for heat equation}
Recall the heat equation \eqref{T} presented in Section \ref{Sec: PDE constraints}. By application of Theorem 6.31 in \cite{GilbTrud} it is easy to see, that for any $ \phi\in(0,1) $ there is a unique solution $ T\in C^{2,\phi}(\overline{\Omega}) $ of \eqref{T} for every $ \Omega \in \O $ supposed that $ \eta \in \C{1,\phi}{\overline{\Omega}}$, $ \eta\restriction_{\RO}>0 $ and $ T_{e}\in \C{1,\phi}{\overline{\Omega}} $.


Assume that $ T \in \C{2,\phi}{\overline{\Omega}} $ is a solution of \eqref{T} for some $ \Omega \in \O $. 
 Then we can apply inequality \eqref{eq: schauder inequality u} to \eqref{PTO} which is equivalent to the problem \eqref{PO} with load vector fields $ \tilde{f}=f-\rho(3\lambda+2\mu)\nabla T $ and $ \tilde{g}=g+\rho(3\lambda+2\mu)(T-T_{0})\cdot \nu $. Together with triangle inequality and Lemma \ref{Lemma: uniform cone lemma} we obtain for the unique solutions $ u $ of \eqref{PO} and $ u_{T} $ of \eqref{PTO}
\begin{equation*} 
\begin{split} 
\Norm{u_{T}}{\Ck{3,\varphi}{\Omega}{3}}
&\leq C\left( \Norm{f-c\nabla T}{\Ck{1,\phi}{\Omega}{3}}+ \Norm{g-c(T-T_{0})\cdot \nu}{\Ck{2,\phi}{\RO}{3}}\vspace{0.1cm}+\Norm{u}{\Ck{0}{\Omega}{3}}\right)
\\
 & \leq C\bigl( \Norm{f}{\Ck{1,\phi}{\Omega}{3}}+\Norm{g}{\Ck{2,\phi}{\RO}{3}}+\varepsilon\Norm{u}{\Ck{1}{\Omega}{3}}
+C(\varepsilon)\int_{\Omega} \vert u \vert \, dx\bigr)
\\
& ~~~+C\rho(3\lambda+2\mu)\left(\Norm{\nabla T}{\Ck{1,\phi}{\Omega}{3}} +\Norm{T-T_{0}}{\C{2,\phi}{\RO}}\Norm{\nu}{\Ck{2,\phi}{\RO}{3}}\right)
\end{split}
\end{equation*}
\noindent where $ c:=\rho(3\lambda+2\mu) $. Thus, 
\begin{align}\label{eq: estimate u_{T}}
(1-\varepsilon C) \Norm{u_{T}}{\Ck{3,\varphi}{\Omega}{3}} \leq C_{f,g}+ C\rho(3\lambda+2\mu)&\left(\Norm{\nabla T}{\Ck{1,\phi}{\Omega}{3}}+\right. \nonumber\\ 
&~~\left.\Norm{T-T_{0}}{\C{2,\phi}{\RO}}\Norm{\nu}{\Ck{2,\phi}{\RO}{3}}\right).
\end{align} 
As explained in the proof of Theorem 5.7 in \cite{Hanno} it can be shown by two times application of Korn's second inequality to the $ L^{1} $-Norm of $ u $ that the constant $ C_{f,g}$ can be chosen to be uniform w.r.t $ \O $. The letter is also true for  $\Norm{\nu}{\Ck{2,\phi}{\RO}{3}} \leq \tilde{C}$, confer the proof of Lemma \ref{Lemma: Schauder estimate T}.
Hence, it will be sufficient to show that $\Norm{T}{\C{2,\phi}{\Omega}} \leq C^{t}$ for a constant $C^{t}$ independent of $ \Omega \in \O $ to derive uniform boundedness of the solutions $ u $ of \eqref{PTO}.

The following result is presented by not proofed in detail by \cite{GilbTrud}. In order to proof the uniformity of the constant $ C^{t} $ occuring in the subsequent Lemma, we transfer the strategy they present in \cite{GilbTrud} Lemma 6.5 and Theorem 6.6:

\begin{thm}[\textbf{\textit{Schauder Estimates for Elliptic PDE with Convective BC}}]\label{Thm: 6.30 Gilbarg/Trudinger}$  $\\
Let $ \Omega $ be a $ C^{2,\phi} $ domain in $ \R{n} $, and let $ T\in \C{2,\phi}{\overline{\Omega}} $ be a solution of $ L(x)T=f $. Define the boundary condition (BC) by
$$
B(x)T \equiv \gamma(x)T +\sum_{i=1}^{n} \beta_{i}(x) D^{i}T = \vartheta(x) ,\,x\in \RO,
$$ 
where the normal component $ \beta_{\nu} $ of the vector $ \beta=(\beta_{1},...,\beta_{n}) $ is non zero and $ \Norm{\beta_{\nu}}{\R{n}}\geq \kappa >0 \text{ on } \RO,$ for some constant $ \kappa $. It is assumed that the operator $ L $, defined by
$$
L(x)T=a_{i,j}(x)D^{i,j}T+b_{i}(x)D^{i}T+c(x)T,\, a_{i,j}=a_{j,i},\, i,j=1,\ldots,n,
$$
is strictly elliptic with constant $ l $ and that 
$f \in \C{\phi}{\overline{\Omega}},\, \vartheta \in \C{1,\phi}{\overline{\Omega}}, \, a_{i,j},\,b_i,\, c \in \C{\phi}{\overline{\Omega}} $
and $ \gamma,\, \beta_{i} \in \C{1,\phi}{\overline{\Omega}} $ with 
$$
\Norm{a_{i,j},\,b_{i},\,c}{\C{0,\phi}{\Omega}}, \, \Norm{\gamma,\, \beta_{i}}{\C{1,\phi}{\Omega}}\leq \mathscr{L},\,i,j=1,\ldots,n.
$$ 
Then 
\begin{equation}
 \Norm{T}{\C{2,\phi}{\Omega}}\leq C \left(\Norm{T}{\C{0}{\Omega}}+\Norm{\vartheta}{\C{1,\phi}{\Omega}}+\Norm{f}{\C{0,\phi}{\Omega}}\right)
\end{equation}
where $ C=C_{n,\phi,l,\mathscr{L},\kappa,\Omega} $.
\end{thm}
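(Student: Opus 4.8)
The statement is the Schauder estimate for a second-order elliptic equation with an oblique (here: Robin-type) boundary condition, stated e.g.\ as Theorem 6.30 in \cite{GilbTrud}. The one novel point compared with the reference is that we must track the dependence of the constant $C$ on the domain $\Omega$ and make clear that it is governed by finitely many geometric quantities (the $C^{2,\phi}$ bounds of the local boundary charts and a lower bound on the radii on which they are defined), which is exactly what is needed in Section~\ref{sec:UnifSchauder} to get uniformity over $\mathcal{O}$. The plan is therefore to reproduce the proof scheme of \cite[Lemma~6.5, Theorem~6.6]{GilbTrud} (interior estimates plus boundary estimates via flattening the boundary), being careful about what the constants depend on.

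\textbf{Step 1 (interior estimate).} First I would invoke the interior Schauder estimate for $L(x)T=f$ on balls: for $B_{2r}(x_0)\subset\Omega$ one has $\Norm{T}{\C{2,\phi}{B_r(x_0)}}\le c\,(\Norm{T}{\C{0}{B_{2r}(x_0)}}+\Norm{f}{\C{0,\phi}{B_{2r}(x_0)}})$ with $c$ depending only on $n,\phi,l,\mathscr{L},r$. This is the standard potential-theoretic estimate and requires no new work.

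\textbf{Step 2 (boundary estimate after flattening).} Near a boundary point $x_0\in\RO$ I would use a $C^{2,\phi}$ diffeomorphism $\psi$ straightening $\RO$ in a neighbourhood, mapping $\Omega\cap U$ to a half-ball and $\RO\cap U$ to the flat piece. Under this change of variables $L$ transforms into another strictly elliptic operator $\tilde L$ whose coefficient norms are controlled by $\mathscr{L}$ and the $C^{2,\phi}$ norm of $\psi$, and the oblique derivative condition $B(x)T=\vartheta$ transforms into an oblique condition on the flat part whose ellipticity-of-the-boundary-operator constant is controlled by $\kappa$ (using the lower bound $\Norm{\beta_\nu}{\R{n}}\ge\kappa$) and again $\Norm{\psi}{C^{2,\phi}}$. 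Then one applies the half-space Schauder estimate for oblique boundary problems on the half-ball; this is the genuinely technical ingredient (it uses boundary versions of the Campanato/Morrey characterization of H\"older spaces, or the explicit Poisson-type kernels for the oblique problem on a half-space as in \cite{GilbTrud}), and I would simply cite it. Transforming back gives $\Norm{T}{\C{2,\phi}{\Omega\cap U'}}\le C'(\Norm{T}{\C{0}{\Omega\cap U}}+\Norm{\vartheta}{\C{1,\phi}{\Omega\cap U}}+\Norm{f}{\C{0,\phi}{\Omega\cap U}})$ with $C'$ depending on $n,\phi,l,\mathscr{L},\kappa$ and the chart $\psi$.

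\textbf{Step 3 (patching and the role of $\delta^{\ast}$).} Cover $\overline{\Omega}$ by finitely many interior balls and boundary neighbourhoods; since $\Omega$ is a $C^{2,\phi}$ domain, the boundary neighbourhoods can be taken of a fixed radius $\delta^{\ast}>0$ and with flattening maps of bounded $C^{2,\phi}$ norm, and the \emph{number} of charts needed is controlled once $\delta^{\ast}$ and $\diam\Omega$ are fixed. Summing the local estimates of Steps 1--2 over this finite cover, and absorbing the overlaps with an interpolation inequality of the type $\Norm{T}{\C{0,\phi}{\Omega}}\le\epsilon\Norm{T}{\C{2,\phi}{\Omega}}+C_\epsilon\Norm{T}{\C{0}{\Omega}}$, yields the global bound with a constant $C=C_{n,\phi,l,\mathscr{L},\kappa,\Omega}$ that in fact depends on $\Omega$ only through $\delta^{\ast}$, the $C^{2,\phi}$ bounds of the charts, and $\diam\Omega$. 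The main obstacle is purely bookkeeping: making sure every transformation and every patching step introduces only constants of this controlled form, so that when this theorem is fed into Lemma~\ref{Lemma: Schauder estimate T} (where a uniform $\delta^{\ast}$ and uniform chart bounds are constructed for all $\Omega\in\mathcal{O}$), the resulting $C^{t}$ is uniform over $\mathcal{O}$. The oblique-boundary half-space estimate itself I would not reprove; it is the one place I lean entirely on \cite{GilbTrud}.
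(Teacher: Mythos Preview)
Your plan matches the paper's proof: local flattening, applying the half-space oblique-derivative estimate (Lemma~6.29 in \cite{GilbTrud}), transforming back, and patching over a finite cover of radius $\delta^\ast$, while tracking that the constants depend on $\Omega$ only through the charts and $\diam\Omega$. The one point you assert but the paper actually works out is the lower bound $|\tilde\beta_n|\ge\tilde\kappa>0$ for the transformed oblique direction: the paper obtains this via a determinant argument (writing $J_\tau$ in a frame adapted to the tangent/normal splitting of $\RO$, observing that tangent vectors map to the flat piece so the matrix is block upper-triangular, whence $|e_n\cdot J_\tau\nu|\ge|\det J_\tau|/C_2$), and this explicit dependence of $\tilde\kappa$ on $\tau$ is exactly what is exploited downstream in Lemma~\ref{Lemma: Schauder estimate T} to get uniformity over $\mathcal{O}$.
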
 

\begin{proof}
We choose a $ C^{2,\phi} $-diffeomorphism $ \tau $ that straightens the boundary in a neighborhood $ N $ of a point $ x_{0}\in \RO $. Let $ \Umg{\delta}{x_{0}} \subset \subset N$ and set 

\begin{equation}\label{Choice B_{0}}
 B_{0}=\Umg{\delta}{x_{0}}\cap\Omega,~~~~ \Gamma_{0}=\Umg{\delta}{x_{0}}\cap\RO \subset \partial B_{0} . 
\end{equation}
 
 \noindent Consider the local problem
\begin{align*}
	\left. 
		\begin{array}{ll}
			 ~~~~~L T=0 &\text{ in } B_{0}\\[1ex]
			 \gamma(x) T_{e} =\gamma(x) T+\sum_{i=1}^{n}\beta_{i}(x)D^{i} T &\text{ on } \Gamma_{0},
		 \end{array} 
	\right.
\end{align*}

\noindent which is transformed to  
\begin{align*}
	\left. 
		\begin{array}{ll}
			 ~~~~~\tilde{L} \tilde{T}=0 &\text{ in } \tilde{B}_{0}\\[1ex]
			 \tilde{\gamma}(y) \tilde{T}_{e} =\tilde{\gamma}(y) \tilde{T} +\sum_{i=1}^{n}\tilde{\beta}_{i}(x)D^{i} \tilde{T} &\text{ on } \tilde{\Gamma}_{0},
		 \end{array} 
	\right.
\end{align*}

\noindent by $ \tau $. For $ y=\tau(x),\,\tilde{T}(y)=T(x) $ the equation $ \tilde{L}(y) \tilde{T}=\tilde{a}_{i,j}(y)D^{i,j}\tilde{T}+\tilde{b}_{i}(y)D^{i}\tilde{T}+\tilde{c}(y)\tilde{T} $ \footnote{For a detailed description see the proof of Lemma 6.5 \cite{GilbTrud}} holds for $ y \in \tilde{B}_{0}=\tau(B_{0}) $. This defines again an elliptic PDE: Since $ \tilde{a}_{i,j}(y)=\sum_{i,j=1}^{n}D^{r}\tau_{i}(x)D^{s}\tau_{j}(x)a_{r,s}(x) $ we receive for $ x\in B_{0},\,\xi\in \R{n} $
\[
\sum_{i,j=1}^{n}\tilde{a}_{i,j}\xi_{i}\xi_{j}
=\sum_{r,s=1}^{n}a_{r,s}(D^{r}\tau_{i}(x)\xi_{i})(D^{s}\tau_{j}(x)\xi_{j})
\geq l \Norm{J_{\tau}(x)\xi}{\R{3}}^2
=l \xi^{\top} J_{\tau}(x)^{\top}J_{\tau}(x)\xi.
\]

\noindent Since $ \tau $ is an $ C^{2,\phi} $-diffeomorphism, it's determinant is nowhere equal to zero and $ J_{\tau}(x)^{\top}J_{\tau}(x) $ is symmetric, positive definite for any $ x\in B_{0} $. Hence,  $ J_{\tau}^{\top}J_{\tau}=(S\mathcal{D}^{1/2})^{\top}\mathcal{D}^{1/2}S $ on $ B_{0} $, where $ S $ is a suitable orthogonal matrix and $ \mathcal{D} $ is a diagonal matrix whose diagonal elements are the positive eigenvalues $ \lambda^{\tau}_{i}, \, i=1,\ldots,n $ of $ J_{\tau}^{\top}J_{\tau} $. We conclude 
\[ 
\sum_{i,j=1}^{3}\tilde{a}_{i,j}(y)\xi_{i}\xi_{j} 
\geq l\Norm{\mathcal{D}^{1/2}(x)S(x)\xi}{\R{3}}^{2}
\geq l\, \lambda^{\tau}_{min}(x)\Norm{\xi}{\R{3}}^{2}~~~ x\in B_{0},\, \xi\in \R{n}.
\]

\noindent The eigenvalues depend continuously on $ x $ lying in the compact set $\overline{B_{0}}$, what gives us the possibility to choose a lower bound $l\, \lambda^{\tau}_{min}(x)\geq\tilde{l}>0 $.\\
By application of (6.30) \cite{GilbTrud} one sees that $ \Norm{\tilde{a}_{i,j}}{\C{0,\phi}{B_{0}}}, \, \Norm{\tilde{b}_{i}}{\C{0,\phi}{B_{0}}}, \Norm{\tilde{c}}{\C{0,\phi}{B_{0}}}\leq \tilde{\mathscr{L}}=C_{\tau}\mathscr{L} $, because $\tau $ is bounded upwards in $ \Norm{.}{\C{2,\phi}{\Omega}} $. The letter is also true for 
$ \Norm{\tilde{\gamma}}{\C{1,\phi}{\tilde{B}_{0}}} $, where $ \tilde{\gamma}(y)=\gamma(x) $, and $ \Norm{\tilde{\beta_{i}}}{\C{1,\phi}{\tilde{B}_{0}}} $ with $ \tilde{ \beta_{i}}(y) = \sum_{j=1}^{n} \beta_{j}(x)D^{j} \tau_{i}(x) $ on the hyperplane portion $ \tilde{\Gamma}_{0}=\tau(\Gamma_{0}) $.
Now we will show that $ \vert \tilde{\beta}_{n} \vert \geq \tilde{\kappa}>0 $, what is requested for the statement in Lemma 6.29\footnote{Lemma 6.29 is the analogous one to Lemma 6.4, which is used in the proof of Lemma 6.5 } \cite{GilbTrud}. \\
Since $ \tau:\Omega\subset \R{n} \to \Sigma_{R} \subset \R{n}$ we can consider the differential $ d \tau_{x} $ as a mapping  from $ T_{x}\subset \R{n} $ to $ \R{n} $ and interpret $ \RO $ in the $ n $ dimensional sense. We choose an orthonormal basis (ONB) of vectorfields $(e_{1}'(x),\ldots,e_{n-1}'(x))$ of the tangential space assigned to the $ n-1 $ dimensional submanifold $\RO $ at $ x\in \Gamma_{0} $ and extended it to an ONB $ E'(x) $ of $ \R{n} $ by $ e_{n}'(x)=\nu(x) $. Furthermore, define $ E=(e_{1},\ldots,e_{n}) $ to be the standard ONB of $ \R{n} $. Then, it holds that

\begin{equation*}
\begin{split}
\vert \tilde{\beta}_{n}(y) \vert 
=& \vert e_{n} d \tau_{x} \beta(x) \vert
= \vert e_{n} d \tau_{x} \left( \Norm{\beta_{\nu}(x)}{\R{n}} \nu(x)+\beta_{T} \right)  \vert \\
=&\Norm{\beta_{\nu}(x)}{\R{n}} \left|e_{n} d \tau_{x} \nu(x)\right|
\geq \kappa \left|e_{n} d \tau_{x} e_{n}'(x)\right|.
\end{split}
\end{equation*}

\noindent We contemplate the matrix $ B_{i,j}=e_{i} d\tau e_{j}',\, i,j=1,\ldots,n $ for $ x \in \Gamma_{0} $. Because both, $E $ and $ E'(x) $, are ONB of the $ \R{n} $ there exists an orthogonal matrix $ O(x),\, x\in \Gamma_{0} $ where $ e_{i}'(x)=O(x)e_{i},\, \forall \, x\in \{1,\ldots,n\} $. Thereby, we follow
\[ 
B_{i,j}
=\sum_{k=1}^{n}e_{i} d\tau e_{k}O_{k,j}
=(J_{\tau}O^{\top})_{i,j},\, i,j=1,\ldots,n
\]

\noindent and 
\[ 
\left|\det(B(x))\right|
=\left| \det\left(J_{\tau}(x)O(x)^{\top}\right) \right|
=\left|\det\left(J_{\tau}(x)\right)\right|
\geq C_{1} >0
\]

\noindent On the other hand, $ B_{n,j}=e_{n} d\tau e_{j}'=e_{n}e_{j}=0 $ for every $ j\neq n $ and $ B_{n,n}= e_{n} d\tau e_{n}'$. Thus, the matrix $ B $ has the structure
\begin{equation*}
B=\left(	 	
  \begin{array}{c|c}
	 \left(J_{\tau}(x)O(x)^{\top}\right)_{i,j=1,\ldots,n-1} & \vec{b}  \\
	 \hline \vspace{-0.5cm}\\
	 \vec{0}^{\,\top} & e_{n} d\tau e_{n}' \\
  \end{array}
  \right),~~~
  \vec{b},\vec{0} \in \R{n-1},
\end{equation*}

\noindent and it's determinant can be calculated by
\[ 
\left|\det(B(x))\right| 
=\left| \det\left(J_{\tau}(x)O(x)^{\top}\right)_{i,j=1,\ldots,n-1} \right| \left|e_{n}d\tau_{x}e_{n}'(x)\right|
\leq C_{2} \left|e_{n}d\tau_{x}e_{n}'(x)\right|.
\]

\noindent In consequence $ C_{2} \left|e_{n}d\tau_{x}e_{n}'(x)\right| \geq C_{1} >0 ~ \Leftrightarrow ~\left|e_{n}d\tau_{x}e_{n}'(x)\right| \geq \frac{C_{1}}{C_{2}}:=C >0  $ and finally 
\[
\vert \tilde{\beta}_{n}(y) \vert  \geq \kappa C:=\tilde{\kappa}>0.
\]

\noindent All conditions requested in the mentioned Lemma 6.29 are accomplished, what yields 
\[ 
\Norm{\tilde{T}}{\C{2,\phi}{\tilde{B}_{0}\cup \tilde{\Gamma}_{0}}}^{\ast}
\leq \tilde{C} \left( \Norm{\tilde{T}}{\C{0}{\tilde{B}_{0}}}
+\Norm{\tilde{\vartheta}}{\C{1,\phi}{\tilde{\Gamma}_{0}}}
+ \Norm{\tilde{f}}{\C{0,\phi}{\tilde{B}_{0}}}\right),
\]

\noindent with a constant $ \tilde{C} $ that depends on $n,\,\phi,\,\tilde{l},\,\tilde{\mathscr{L}},\,\tilde{\kappa},\,\diam(\tilde{B_{0}}) $. Now we use exactly the same arguments which are carried out in \cite{GilbTrud} and obtain for $ B_{0}'=\Umg{\delta/2}{x_{0}} $ 
\begin{equation}\label{eq: 1. local inequality T}
\Norm{T}{\C{2,\phi}{B_{0}'}}\leq C \left( \Norm{T}{\C{0}{B_{0}}}
+\Norm{\vartheta}{\C{1,\phi}{\tilde{\Gamma}_{0}}}
+ \Norm{f}{\C{0,\phi}{B_{0}}} \right)
\end{equation}

\noindent where $ C=C_{n,\phi,l,\mathscr{L},\kappa,\diam(B_{0}),\tau} $ due to the construction of the coefficients $ \tilde{l},\,\tilde{\mathscr{L}},\,\tilde{\kappa} $, the structure of set $ \tilde{B}_{0} $ and the statement (6.30).\\
Since the boundary of $ \Omega $ is compact one needs only a fixed number $ m $ of points $ x_{i} $ and radii $ \delta_{i} $ to cover the whole boundary. We choose $ \delta^{\ast}=\min \delta_{i}/4 $, $ B=\Umg{ \delta^{\ast}}{x_{0}} $ and assert that 

\begin{equation}\label{eq: 2. local inequality T}
\Norm{T}{\C{2,\phi}{B\cap\Omega}}\leq C\left(\Norm{T}{\C{0}{\Omega}}
+\Norm{\vartheta}{\C{1,\phi}{\RO}}
+ \Norm{f}{\C{0,\phi}{\Omega}}\right)
\end{equation}

\noindent for the maximum  $C= C_{i} $ that is assigned to $ x_{i},\, i=1,\ldots,m $ appearing in \eqref{eq: 1. local inequality T}. Therefore, $ C=C_{n,\phi,l, \mathscr{L},\kappa, \Omega} $, where the dependence on $ \Omega $ is through the radius $  \delta^{\ast} $, the transformations $ \tau $ and $ \diam(\Omega) $.\\ The remainder of the proof is essentially the same as in Theorem 6.6 in \cite{GilbTrud}, but with \eqref{eq: 2. local inequality T} instead of (6.32). Using the same distinction of cases, we end up with 
\begin{equation}
\Norm{T}{\C{2,\phi}{\Omega}}\leq C \left(\Norm{T}{\C{0}{\Omega}}+\Norm{\vartheta}{\C{1,\phi}{\Omega}}+\Norm{f}{\C{0,\phi}{\Omega}}\right)
\end{equation}

\noindent where $ C=C_{n,\phi,l, \mathscr{L},\kappa, \Omega} $ and the dependence of $ \Omega $ is subject to $  \delta^{\ast} $,  $ \tau $  and $ \diam(\Omega) $.
\end{proof}

\begin{defn}\label{Def: heat transfer coeff.}
	Analogously to Definition \ref{Def: SurfaceForceModel}, let $ \mathcal{E}(\mathcal{O},\phi):=\dot\bigcup_{\Omega\in\mathcal{O}} \C{1,\phi}{\RO}$ be the vector bundle with fiber $\C{1,\phi}{\RO}$ over $\Omega\in\mathcal{O}$. 
	We define the space of admissible heat transfer functions as $ \mathcal{E}^{ad}(\mathcal{O},\phi):=\lbrace \bar{\eta}:\O \to \mathcal{E}(\Omega,\phi)\mbox{ s.t. }\bar \eta(\Omega)\in \C{1,\phi}{\RO}, ~ \bar \eta(\Omega)>0 \mbox{ and } \exists 0<k_2<\infty \mbox{ s.t. } \Norm{\bar \eta(\Omega)}{\C{1,\phi}{\RO}}\leq k_{2} ~\forall\Omega\in{\cal O}\rbrace $.
\end{defn}

\begin{lem} \label{Lemma: Schauder estimate T}
Let equation \eqref{T} be given on a domain $ \Omega \in \O $. Suppose that $ \eta:=\bar{\eta}(\Omega) \in \C{1,\phi}{\RO}$ is the associated mapping for some $ \bar{\eta} \in \mathcal{E}^{ad}( \mathcal{O}, \phi) $ and let $ T_{e}\in \C{1,\phi}{\overline{\Oext}} $ for some $ \phi \in (0,1) $. Then there is a constant $ C>0 $ such that
\begin{equation}\label{eq: schauder inequality T} 
\Norm{T}{\C{2,\phi}{\Omega}}\leq C\left(\Norm{T}{\C{0}{\Omega}}+\Norm{\eta T_{e}}{\C{1,\phi}{\Omega}}\right) 
\end{equation}
holds for the unique solution $ T \in \C{2,\phi}{\overline{\Omega}} $. Moreover, the constant $ C $ can be chosen uniformly with respect to $ \O $. 
\end{lem}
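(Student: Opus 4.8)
The plan is to deduce \eqref{eq: schauder inequality T} by specialising Theorem \ref{Thm: 6.30 Gilbarg/Trudinger} to the heat equation \eqref{T}, and then to verify that every quantity on which the constant in that theorem depends can be bounded uniformly over $\mathcal{O}$. Existence and uniqueness of $T\in\C{2,\phi}{\overline{\Omega}}$ has already been recorded at the start of Subsection \ref{Subsec: solutions for heat equation}, so only the a priori estimate and its uniformity remain.

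First I would bring \eqref{T} into the form required by Theorem \ref{Thm: 6.30 Gilbarg/Trudinger}. Dividing the Robin condition $k\,\partial T/\partial\nu=\eta(T_{e}-T)$ by $k$ and rearranging yields
\begin{equation*}
\eta\,T+k\sum_{i=1}^{3}\nu_{i}D^{i}T=\eta\,T_{e}\qquad\text{on }\RO ,
\end{equation*}
so in the notation of Theorem \ref{Thm: 6.30 Gilbarg/Trudinger} one has $L=\Delta$ (hence $a_{i,j}=\delta_{i,j}$, $b_{i}=c=0$, and strict ellipticity with $l=1$), $\gamma=\eta$, $\beta_{i}=k\nu_{i}$, $\vartheta=\eta\,T_{e}$ and $f\equiv 0$; the normal component of $\beta$ is $\beta_{\nu}=k\sum_{i}\nu_{i}^{2}=k>0$, so one may take $\kappa:=k$. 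Here $\eta=\bar\eta(\Omega)$ is understood as a $\C{1,\phi}{\overline{\Omega}}$-extension of the given boundary datum with norm controlled by $k_{2}$, $T_{e}$ is the restriction of the given function on $\Oext$, and the normal field $\nu$ is extended to $\C{1,\phi}{\overline{\Omega}}$ with controlled norm in the same way. Theorem \ref{Thm: 6.30 Gilbarg/Trudinger} then gives
\begin{equation*}
\Norm{T}{\C{2,\phi}{\Omega}}\leq C\left(\Norm{T}{\C{0}{\Omega}}+\Norm{\eta\,T_{e}}{\C{1,\phi}{\Omega}}\right),\qquad C=C_{n,\phi,l,\mathscr{L},\kappa,\Omega},
\end{equation*}
which is \eqref{eq: schauder inequality T} for the fixed $\Omega$.

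It remains to choose $C$ independently of $\Omega\in\mathcal{O}$. The parameters $n=3$, $\phi$, $l=1$ and $\kappa=k$ are already fixed. For the coefficient bound $\mathscr{L}$ one needs uniform control of $\Norm{a_{i,j},b_{i},c}{\C{0,\phi}{\Omega}}$, which is immediate since these coefficients are constant, and of $\Norm{\gamma,\beta_{i}}{\C{1,\phi}{\Omega}}$; here $\Norm{\gamma}{\C{1,\phi}{\Omega}}=\Norm{\eta}{\C{1,\phi}{\Omega}}\leq k_{2}$ by Definition \ref{Def: heat transfer coeff.}, while $\Norm{\beta_{i}}{\C{1,\phi}{\Omega}}=k\Norm{\nu_{i}}{\C{1,\phi}{\Omega}}$ is uniform because, by Lemma \ref{Lemma: uniform hemisphere condition}, $\RO$ is locally the graph of $C^{4,\alpha}$ functions with uniformly bounded norms and $\nu$ is the usual algebraic expression in their first derivatives, hence uniformly bounded even in $\Ck{2,\phi}{\RO}{3}$ (the bound $\Norm{\nu}{\Ck{2,\phi}{\RO}{3}}\leq\tilde C$ referred to in Subsection \ref{Subsec: solutions for heat equation}). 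Finally, as the proof of Theorem \ref{Thm: 6.30 Gilbarg/Trudinger} makes explicit, the residual dependence of $C$ on $\Omega$ enters only through $\diam(\Omega)$, the boundary-straightening transformations $\tau$, and the radius $\delta^{\ast}$ of the balls on which $\RO$ is flattened. Every $\Omega\in\mathcal{O}$ lies in $\Oext=\Umg{R}{z}$, so $\diam(\Omega)\leq 2R$; for $\Omega=\Phi(\Omega_{b})$ the transformations $\tau$ are built from the fixed hemisphere transformations of $\Omega_{b}$ and the deformation map $\Phi$ (cf.\ Lemma \ref{Lemma: uniform hemisphere condition} and the proof of Theorem \ref{Thm: schauder estimates u}), so their $C^{2,\phi}$ norms are bounded in terms of the $C^{4,\alpha}$ data of $\Omega_{b}$ and the constant $K$ from Definition \ref{Def: k-adm domains}; and a uniform $\delta^{\ast}>0$ is produced by fixing a finite cover of $\partial\Omega_{b}$ by balls on which $\partial\Omega_{b}$ is straightened, picking a Lebesgue number $\lambda_{b}>0$ of that cover, and invoking the bi-Lipschitz bound \eqref{eq: (6.29) Gilbarg/Trudinger}, $\mathcal{K}^{-1}|x-y|\leq|\Phi(x)-\Phi(y)|\leq\mathcal{K}|x-y|$, which gives $\Umg{\mathcal{K}^{-1}\lambda_{b}}{\Phi(q)}\subseteq\Phi\bigl(\Umg{\lambda_{b}}{q}\bigr)$ for every $q\in\partial\Omega_{b}$, so that $\RO$ is straightened on a ball of radius $\mathcal{K}^{-1}\lambda_{b}$ about each of its points; one may then take $\delta^{\ast}=\mathcal{K}^{-1}\lambda_{b}/4$, which depends only on $\Omega_{b}$ and $K$. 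Since the patched global constant in Theorem \ref{Thm: 6.30 Gilbarg/Trudinger} is the maximum of the local ones, this makes $C$ depend only on $k$, $\phi$, $k_{2}$, $R$, $K$ and the fixed data of $\Omega_{b}$, hence uniform with respect to $\mathcal{O}$.

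The step I expect to be the main obstacle is this last one: producing the uniform lower bound $\delta^{\ast}>0$ together with the uniform $C^{2,\phi}$ bounds on the straightening transformations $\tau$, i.e.\ transporting the fixed boundary geometry of $\Omega_{b}$ to every deformed shape $\Phi(\Omega_{b})$ with constants that do not deteriorate. This is exactly where the compactness of $\Uad_{k,\alpha}$ (Lemma \ref{Lemma: compactnes Uad}) and the bi-Lipschitz estimate \eqref{eq: (6.29) Gilbarg/Trudinger} must be exploited, in the same spirit as in the proof of Lemma \ref{Lemma: uniform hemisphere condition} and of Theorem \ref{Thm: schauder estimates u}.
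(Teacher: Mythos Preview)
Your proposal is correct and follows essentially the same route as the paper: specialise Theorem \ref{Thm: 6.30 Gilbarg/Trudinger} to \eqref{T} with $L=\Delta$, $l=1$, $\beta$ proportional to $\nu$, $\gamma$ proportional to $\eta$, $\vartheta$ proportional to $\eta T_e$, and then argue that the residual $\Omega$-dependence of the constant (through $\diam(\Omega)$, the straightening maps $\tau$, and the radius $\delta^{\ast}$) is controlled uniformly via the hemisphere data of $\Omega_b$ and the bounds on $\Phi\in\Uad$.

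Only cosmetic differences are worth noting. The paper divides the boundary condition by $k$ (so $\gamma=k^{-1}\eta$, $\beta=\nu$, $\kappa<1$) whereas you keep the factor $k$ on the normal derivative ($\gamma=\eta$, $\beta=k\nu$, $\kappa=k$); both are admissible normalisations. For $\diam(\Omega)$ you invoke the inclusion $\Omega\subset\Oext=B_R(z)$, the paper uses the bi-Lipschitz estimate \eqref{eq: (6.29) Gilbarg/Trudinger} to compare with $\diam(\Omega_b)$; either works. Finally, the paper makes the uniform lower bound on the transformed ellipticity constant $\tilde l=l\,\lambda^{\tau}_{\min}$ explicit via a compactness argument on the family of hemisphere transformations, whereas you subsume this under ``uniform $C^{2,\phi}$ bounds on $\tau$ and $\tau^{-1}$''---that is indeed sufficient, since a uniform $C^1$ bound on $\tau^{-1}$ forces a uniform lower bound on the eigenvalues of $J_\tau^{\top}J_\tau$. (A trivial slip: you write ``dividing by $k$'' but then display the undivided form $\eta T+k\sum\nu_iD^iT=\eta T_e$; the displayed equation is the correct one.)
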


\begin{proof}
It is easy to see that Theorem \ref{Thm: 6.30 Gilbarg/Trudinger} can be applied to 
\begin{align*}
	\left. 
		\begin{array}{ll}
			 ~~~~~\Delta T=0 &\text{ in } \Omega\\[1ex]
			 k^{-1}\eta T_{e} =k^{-1}\eta T+\sum_{1}^{3}\nu_{i}D^{i} T &\text{ on } \RO,
		 \end{array} 
	\right.
\end{align*}

\noindent for any $ \Omega \in \O $. Now we have a look into the proof of the last mentioned theorem:
 
\noindent We can choose $ l=1 $ to be the elasticity parameter. The outward normal $ \nu $ is a $ \Ck{3,\phi'}{\RO}{3} $-function and a parametrization of the boundary can be constructed using the uniformly bounded hemisphere transformations, what implicates that $ \Norm{\nu}{\C{1,\phi}{\Omega}}\leq \mathscr{L}_{1} $ independent of $ \Omega $. The $ C^{0,\phi} $-Norms of $ a_{i,j}(x)\in \{0,1\},\, b_{i}(x)=c(x)=0 $ can't surely be greater than $ \mathscr{L}_{2}=1 $ and $ \Norm{\eta}{\C{1,\phi}{\Omega}}\leq \mathscr{L}_{3} $ holds by choice of $ \eta $. Therefore, we choose $ \mathscr{L} $ as their maximum. At last, let $ \kappa < 1= \Norm{\nu}{\R{3}} $. Then we derive form inequation \eqref{eq: schauder inequality T} that
\[
\Norm{T}{\C{2,\phi}{\Omega}}\leq C\left(\Norm{T}{\C{0}{\Omega}}+\Norm{\eta T_{e}}{\C{1,\phi}{\Omega}}\right),
\]
\noindent with $ C=C_{\Omega} $ and the dependence of $ \Omega $ is subject to the choice of $ \delta $,  $ \tau $  and $ \diam(\Omega) $ appearing in the proof of \ref{Thm: 6.30 Gilbarg/Trudinger}.

\noindent Because every $ \Omega \in \O $ satisfies the uniform hemisphere condition with transformations $ \tau=\mathcal{T}^{-1} $ in Lemma \ref{Lemma: uniform hemisphere condition} we can choose $ \delta=\nicefrac{d}{2} $ in \eqref{Choice B_{0}} and as a consequence $  \delta^{\ast} = \nicefrac{d}{8}$ in \eqref{eq: 2. local inequality T}. Furthermore, $ \lambda^{\tau}_{min}(x)>0 $ depends continuously on $ \tau $ and on $ x\in \Omega $. However, we know that $ \overline{\Omega} $ is compact, and the set of transformations is as well regarding the $ C^{k,\alpha'} $-norms where $ k+\alpha'<4+\alpha $. We conclude that there has to be a lower bound $ \lambda^{*}\leq\lambda^{\tau}_{min}(x)  $ for all $ x\in \Omega $ and hemisphere transformations $ \tau $. Consequentially it is uniform w.r.t. $ \O $. The global boundedness of $ \Norm{\tau}{\Ck{1}{\Omega}{3}}\leq \Norm{\tau}{\Ck{4,\alpha}{\Omega}{3}}  $ implicates that $ \tilde{\mathscr{L}}=\mathscr{L} C_{\tau}\leq \mathscr{L}^{\ast} $ is as well, and for essentially the same reasons the assertion $ \tilde{\kappa}=\kappa \frac{C_{1}}{C_{2}} >\kappa^*$ holds, too. For $ \diam(\Omega) $ we notice that 
\[ 
 \diam(\Omega)=\sup_{x,y\in \Omega}\Norm{x-y}{\R{3}}\leq \mathcal{K}\sup_{x',y'\in \Omega_{b}} \Norm{x'-y'}{\R{3}}\leq \mathcal{K}\diam(\Omega_{b}),
\]

\noindent where $ \mathcal{K} $ is a constant depending again only on $ \tau $ and $ \Omega_{b}, $ compare \eqref{eq: (6.29) Gilbarg/Trudinger} . 
\end{proof}

The norms  of $\eta$ and $ T_{e} $ are naturally bounded by choice of these functions. What is left to be shown, is that the letter is also true for $\Norm{T}{\C{0}{\Omega}}$:

\begin{lem}\label{Lemma: max/min principle}
Let the seting of the previous Lemma be given and $ T\in \C{2,\phi}{\overline{\Omega}} $ the unique solution of \eqref{T}. For the constants $ T_{-}=\min\{T_{e}(x) \, | \, x \in \overline{\Oext} \} $ and $ T_{+}=\max\{T_{e}(x) \, | \, x \in \overline{\Oext} \}$ it holds that
\begin{equation}
	T_{-}\leq \min_{x\in \overline{\Omega}} T(x) \leq \max_{x\in \overline{\Omega}}T(x)\leq T_{+}.
\end{equation}
\end{lem}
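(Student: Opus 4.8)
The plan is to prove the statement as a maximum/comparison principle for the harmonic function $T$ subject to the Robin condition in \eqref{T}, using crucially that $k>0$ and $\eta=\bar\eta(\Omega)>0$ on $\RO$. By the symmetry $T\mapsto -T$, $T_e\mapsto -T_e$ it suffices to establish the upper bound $\max_{\overline{\Omega}}T\leq T_+$; the lower bound then follows verbatim.

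First I would invoke the classical weak maximum principle for the Laplace equation: since $\Delta T=0$ in the bounded domain $\Omega$ and $T\in\C{2,\phi}{\overline{\Omega}}$ is continuous up to the boundary, $M:=\max_{\overline{\Omega}}T$ is attained at some $x_0\in\RO$. At such a boundary maximum I would observe that the outward normal derivative satisfies $\partial T/\partial\nu(x_0)\geq 0$: for small $t>0$ the point $x_0-t\nu(x_0)$ lies in $\Omega$ (the boundary of $\Omega\in\O=\O_{4,\alpha}$ is $C^{4,\alpha}$, in particular $C^1$), so $T(x_0-t\nu(x_0))\leq M=T(x_0)$, and dividing by $t$ and letting $t\downarrow 0$ gives $\partial T/\partial\nu(x_0)=\nabla T(x_0)\cdot\nu(x_0)\geq 0$ by continuity of $\nabla T$ on $\overline{\Omega}$. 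Substituting into the boundary condition $k\,\partial T/\partial\nu(x_0)=\eta(x_0)\bigl(T_e(x_0)-M\bigr)$ and using $k>0$ yields $\eta(x_0)\bigl(T_e(x_0)-M\bigr)\geq 0$; since $\eta(x_0)>0$ this forces $M\leq T_e(x_0)$. Finally $\overline{\Omega}\subset\Oext\subset\overline{\Oext}$ by the construction in Definition \ref{Def: k-adm domains}, hence $T_e(x_0)\leq T_+$ and $M\leq T_+$. The same reasoning at a boundary minimum point $x_1$ of $T$, where now $\partial T/\partial\nu(x_1)\leq 0$ and $\eta(x_1)>0$ force $\min_{\overline{\Omega}}T\geq T_e(x_1)$, gives the lower bound $\min_{\overline{\Omega}}T\geq T_-$.

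No uniformity in $\Omega$ is involved, so the result is essentially elementary; the only points that deserve a line of justification are the weak maximum principle (standard for harmonic functions on bounded domains), the one-sided sign of the normal derivative at a boundary extremum (which uses only $\RO\in C^1$ and $T\in\C{2,\phi}{\overline{\Omega}}$), and the strict positivity $\bar\eta(\Omega)>0$ on $\RO$, which is part of the definition of $\mathcal{E}^{ad}(\mathcal{O},\phi)$ in Definition \ref{Def: heat transfer coeff.}. I do not expect a genuine obstacle; should one prefer, Hopf's boundary point lemma (applicable since $\Omega$ satisfies an interior sphere condition) gives a strict normal derivative and an alternative route, but it is not needed for the non-strict bounds claimed here.
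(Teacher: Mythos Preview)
Your proof is correct and follows essentially the same route as the paper: both invoke the weak maximum principle to locate the extremum on $\RO$, then use the sign of the outward normal derivative there together with the Robin condition and $\eta>0$ to bound $T$ by $T_e$. The only cosmetic difference is that the paper phrases the boundary step as a contradiction (assuming $T(x_0)>T_+$ and producing, via a first-order Taylor expansion, an interior point with a larger value), whereas you argue directly that $\partial T/\partial\nu(x_0)\geq 0$ forces $M\leq T_e(x_0)\leq T_+$.
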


\begin{proof} We only proof the statement for $ T_{+} $ because the proof for $ T_{-} $ proceeds analogously.

The solution $ T\in \C{2,\phi}{\overline{\Omega}} $ satisfies $\Delta T =0$ on $ \Omega $ and therefore it is harmonic as well as subharmonic. 
By application of maximum and minimum principle, see \cite{Evans,GilbTrud}, we conclude that $\min_{x\in\overline{\Omega}}T(x) =\min_{x\in \partial \Omega}T(x)$ and $\max_{x\in\overline{\Omega}}T(x)=\max_{x\in \partial \Omega}T(x).$

Suppose that there is some $ x_{0}\in \partial \Omega $ where $ T(x_{0})>T_{+} $. Then, 
\begin{equation*} 
	\frac{\partial T}{\partial \nu}(x_{0})=k^{-1}\eta(x_{0})(T_{e}(x_{0})-T(x_{0}))\leq k^{-1}\eta(x_{0})(T_{+}-T(x_{0}))<0,
\end{equation*}
because $ k^{-1}\eta >0 $ on  $ \overline{\Omega} $ and $ T_{e}(x) \leq T_{+} $ for all $ x\in \overline{\Oext} $. Let $ \varepsilon >0 $. Consistently $ x_{0}-\varepsilon\nu(x_{0})$ is contained in $ \overset{\circ}{\Omega} $ and we receive by first order Taylor series 
\begin{equation} \nonumber
	\begin{split}
		T(x_{0}-\varepsilon \nu(x_{0}))
		= T(x_{0})-\varepsilon\biggl[\nu(x_{0})^{\top}\nabla T(x_{0})-\frac{1}{\varepsilon} (r(x_{0}-\varepsilon \nu(x_{0}))\biggr],
	\end{split}
\end{equation}
where $\frac{\vert r(x_{0}-\varepsilon \nu(x_{0}))\vert}{ \Norm{\varepsilon \nu(x_{0})}{\R{3}}}=\frac{\vert r(x_{0}-\varepsilon \nu(x_{0}))\vert}{\varepsilon} \to 0 \text{ for }~\varepsilon \to 0.$ On the other hand, it holds that $ \nu(x_{0})^{\top}\nabla T(x_{0})=\frac{\partial T}{\partial \nu}(x_{0})<0.$ If we choose $ \varepsilon>0 $ small enough, the term $ \nu(x_{0})^{\top}\nabla T(x_{0})-\frac{1}{\varepsilon} (r(x_{0}-\varepsilon \nu(x_{0})) $ will be negative as well and thus $ T(x_{0}-\varepsilon \nu(x_{0}))> T(x_{0}),$ in contradiction to the maximum principle.
\end{proof}


\begin{thm}\label{Thm: uniform Schauder estimate T}
Let the setting of Lemma \ref{Lemma: Schauder estimate T} be given on a domain $ \Omega=\Phi(\Omega_{b}) \in \O $ and let $ T \in \C{2,\phi}{\Omega} $ be it's unique solution. Then, $ T\in \C{2,\phi}{\overline{\Omega}} $ and there is a positive constant $ C^{t}>0 $ such that
\begin{equation}\label{eq: uniform schauder inequality T} 
\Norm{T}{\C{2,\phi}{\Omega}}\leq C^{t},
\end{equation}
where $ C^{t} $ can be chosen uniformly with respect to $ \O $.
\end{thm}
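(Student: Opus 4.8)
The plan is to turn the conditional Schauder bound of Lemma~\ref{Lemma: Schauder estimate T} into an unconditional one by removing the two $\Omega$-dependent quantities on the right-hand side of \eqref{eq: schauder inequality T}, namely $\Norm{T}{\C{0}{\Omega}}$ and $\Norm{\eta T_{e}}{\C{1,\phi}{\Omega}}$, replacing them by bounds that do not depend on the particular admissible shape. The $\C{2,\phi}{\overline{\Omega}}$-regularity of $T$ is not the point here: it has already been established, for every $\Omega\in\O$, from Theorem~6.31 in \cite{GilbTrud} at the beginning of this subsection, so only the uniform numerical bound has to be produced.

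First I would control the zeroth-order term. By Lemma~\ref{Lemma: max/min principle}, $T_{-}\leq\min_{\overline{\Omega}}T\leq\max_{\overline{\Omega}}T\leq T_{+}$, where $T_{\pm}$ are the global extrema of the \emph{fixed} ambient field $T_{e}$ over $\overline{\Oext}$; hence $\Norm{T}{\C{0}{\Omega}}\leq\max\{|T_{-}|,|T_{+}|\}=:M$, a constant depending only on $T_{e}$ and not on $\Omega\in\O$.

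Next I would control $\Norm{\eta T_{e}}{\C{1,\phi}{\Omega}}$. Using the Banach-algebra property of Hölder spaces together with the inclusion $\overline{\Omega}=\overline{\Phi(\Omega_{b})}\subset\Oext$ built into Definition~\ref{Def: k-adm domains} — so that the $C^{1,\phi}$-norm of $T_{e}$ over $\Omega$, over $\RO$, or over $\overline{\Omega}$ is in each case bounded by the fixed number $\Norm{T_{e}}{\C{1,\phi}{\overline{\Oext}}}$ — one gets $\Norm{\eta T_{e}}{\C{1,\phi}{\Omega}}\leq c_{\phi}\Norm{\eta}{\C{1,\phi}{\RO}}\Norm{T_{e}}{\C{1,\phi}{\overline{\Oext}}}$ for a dimensional constant $c_{\phi}$. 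By Definition~\ref{Def: heat transfer coeff.}, $\eta=\bar\eta(\Omega)$ with $\bar\eta\in\mathcal{E}^{ad}(\mathcal{O},\phi)$, hence $\Norm{\eta}{\C{1,\phi}{\RO}}\leq k_{2}$ uniformly in $\Omega$, and therefore $\Norm{\eta T_{e}}{\C{1,\phi}{\Omega}}\leq c_{\phi}k_{2}\Norm{T_{e}}{\C{1,\phi}{\overline{\Oext}}}$, uniformly in $\Omega\in\O$.

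Finally I would feed these two estimates into Lemma~\ref{Lemma: Schauder estimate T}, whose constant $C$ is already uniform with respect to $\O$, to obtain
\begin{equation*}
\Norm{T}{\C{2,\phi}{\Omega}}\leq C\bigl(M+c_{\phi}k_{2}\Norm{T_{e}}{\C{1,\phi}{\overline{\Oext}}}\bigr)=:C^{t},
\end{equation*}
which is the assertion. I do not expect a serious obstacle; the only genuinely delicate point is the last uniformity step in the product estimate, where one must make sure that restricting the fixed function $T_{e}$ to the moving boundary $\RO$ cannot enlarge its $C^{1,\phi}$-seminorm — and this is exactly what the containment $\overline{\Omega}\subset\Oext$ from the definition of admissible domains provides. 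Everything else is bookkeeping with the standing hypotheses on $\bar\eta$ and $T_{e}$.
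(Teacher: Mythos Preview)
Your proposal is correct and follows exactly the route the paper intends: the theorem is stated without a formal proof precisely because it is meant as the immediate combination of Lemma~\ref{Lemma: Schauder estimate T}, the remark preceding Lemma~\ref{Lemma: max/min principle} that ``the norms of $\eta$ and $T_{e}$ are naturally bounded by choice of these functions'', and Lemma~\ref{Lemma: max/min principle} itself. Your write-up is in fact more careful than the paper on the product estimate for $\Norm{\eta T_{e}}{\C{1,\phi}{\Omega}}$, spelling out the Banach-algebra bound and the use of the uniform constant $k_{2}$ from Definition~\ref{Def: heat transfer coeff.}, where the paper leaves this implicit.
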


\subsection{Schauder Estimates for the Heat Dependent Elasticity Equation}

In the last two sections we established existence of unique and uniformly bounded solutions for elasticity equation and heat equations separately. In order to derive results for the combined problem we tie up to inequality \eqref{eq: estimate u_{T}}: There, we assumed $ T\in \C{2,\phi}{\overline{\Oext}} $, but in fact we only have $ T\in \C{2,\phi}{\overline{\Omega}} $. \\
Moreover, extensions of functions will be needed to define convergence of solution sequences in the sense of Section \ref{Sec: basic notations} and to poof compactness of the appropriate graph, see Section \ref{Sec: exsistence of optimal shapes}.

\begin{defn}[\bf{ State Problem and State Space for Thermo Elasticity}]\label{Def: state problem}$  $ 
\begin{itemize}
	\vspace*{-0.3cm}
\item[(i)] Let $\Omega:=\Phi(\Omega_{b}) $ be an $ C^{4,\alpha} $-admissible shape for some $ \alpha \in (0,1)$. Moreover, decompose the boundary into  the interior boundary $\partial\Omega_{D}:=\partial \Phi(B) $ and the complete exterior boundary $ \partial\Omega_{N}=\partial\Omega \setminus \partial \Omega_{D} $. Then, the according state problem for thermo elasticity $ \mathcal{P}(\Omega) $ is given by equation \eqref{PTO}. 
 
\item[(ii)] Let $0<\varphi<\phi<1 $. Moreover, assume that $ k>0 $ and that the Lamé coefficients $ \lambda,\, \mu>0 $ are constants, as well as $ \rho $. Additionally, choose $ \eta:=\bar{\eta}(\Omega)\in \mathcal{E}$, $ T_{e}\in \C{1,\phi}{\overline{\Oext}} $, $ f \in \Ck{1,\phi}{\overline{\Oext}}{3},\,g:=\bar{g}(\Omega) \in G$ and $ 0<\varphi'<\varphi $, $ 0<\phi'<\phi $. Then, the state space for thermo elasticity is given  by $ V_{P}(\Omega)= \C{2,\phi'}{\overline{\Omega}}\oplus\Ck{3,\varphi'}{\overline{\Omega}}{3} $.
\end{itemize}
\end{defn}

\noindent Note that the chosen decomposition of the boundary depends continuously on the choice of $ \Phi \in \Uad $. Moreover the two dimensional Lebesgue-measures of $ \partial\Omega_{N} $ and $ \partial\Omega_{D} $ are always bounded away from zero.

\begin{thm}\label{Thm: uniform Schauder estimate u_{T}}
The state Problem $\mathcal{P}(\Omega)$ be defined as in Definition \ref{Def: state problem}. Then for every $ \varphi \in (0,\phi) $ there exists a unique solution $ (T,u_{T})\in V_{P}(\Omega) $ of \eqref{PTO} where
\begin{equation}
\begin{split}
\Norm{T}{\C{2,\phi}{\Omega}}&\leq C^{t}\\
\Norm{u_{T}}{\Ck{3,\varphi}{\Omega}{3}}&\leq C^{t/el}
\end{split}
\end{equation}
holds for the constant $ C^{t}>0 $ of Theorem \ref{Thm: uniform Schauder estimate T} and some constant $C^{t/el}>0$ that also can be chosen independent of $ \Omega $. Remark that the tuple $ (T,u_{T}) $ actually is an element of $ \C{2,\phi}{\overline{\Omega}}\oplus\Ck{3,\varphi}{\overline{\Omega}}{3} $.
\end{thm}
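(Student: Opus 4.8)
The plan is to assemble Theorem \ref{Thm: uniform Schauder estimate u_{T}} from the two uniform estimates already proved separately, treating the thermal field as a fixed, but uniformly controlled, datum for the elasticity equation. First I would invoke Theorem \ref{Thm: uniform Schauder estimate T}: for every $\Omega=\Phi(\Omega_b)\in\O$ there is a unique solution $T\in\C{2,\phi}{\overline{\Omega}}$ of the heat equation \eqref{T}, and $\Norm{T}{\C{2,\phi}{\Omega}}\leq C^t$ with $C^t$ independent of $\Omega$. The one technical wrinkle is that inequality \eqref{eq: estimate u_{T}} was derived under the (too strong) hypothesis $T\in\C{2,\phi}{\overline{\Oext}}$, whereas $T$ is only defined on $\overline{\Omega}$. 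I would resolve this by a uniform extension step: since every $\Omega\in\O$ is a $C^{4,\alpha}$-domain with hemisphere transformations uniformly bounded in $C^{4,\alpha'}$ (Lemma \ref{Lemma: uniform hemisphere condition}), there is a bounded extension operator $E:\C{2,\phi}{\overline{\Omega}}\to\C{2,\phi}{\overline{\Oext}}$ whose operator norm is uniform over $\O$; set $\tilde T=E T$, so $\Norm{\tilde T}{\C{2,\phi}{\Oext}}\leq C_E C^t$. This legitimises reading off $\nabla\tilde T\in\Ck{1,\phi}{\overline{\Oext}}{3}$ as a volume-force contribution and $(\tilde T-T_0)\cdot\nu$ as a surface-load contribution.

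Next I would apply Theorem \ref{Thm: schauder estimates u}(ii) to the reformulated elasticity problem \eqref{PTO}, which, as noted in Section \ref{Sec: uniform schauder estimates}, is precisely \eqref{PO} with modified data $\tilde f=f-\rho(3\lambda+2\mu)\nabla T$ and $\tilde g=g+\rho(3\lambda+2\mu)(T-T_0)\cdot\nu$. Here $f\in\Ck{1,\phi}{\overline{\Oext}}{3}$ (indeed $\Ck{2,\phi}{}{}$ as required in part (ii)), $g=\bar g(\Omega)$ comes from an admissible surface force model $\bar g\in G^{ad}$ with fibre norm bounded by $k_1$, and $\Norm{\nu}{\Ck{2,\phi}{\RO}{3}}\leq\tilde C$ uniformly (from the proof of Lemma \ref{Lemma: Schauder estimate T}). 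Combining these bounds, $\Norm{\tilde f}{\Ck{1,\phi}{\Omega}{3}}$ and $\Norm{\tilde g}{\Ck{2,\phi}{\RO}{3}}$ are bounded uniformly in $\Omega$ by constants built from $k_1$, $\Norm{f}{\Ck{1,\phi}{\Oext}{3}}$, $C^t$, $C_E$, $\tilde C$ and the material constants. Feeding this into \eqref{eq: estimate u_{T}}, choosing $\varepsilon>0$ with $\varepsilon C<1$ fixed once and for all, and using the two applications of Korn's second inequality on the $L^1$-norm of $u$ (as in the proof of Theorem 5.7 of \cite{Hanno}) to absorb the $\int_\Omega|u|\,dx$ term uniformly, yields $\Norm{u_T}{\Ck{3,\varphi}{\Omega}{3}}\leq C^{t/el}$ with $C^{t/el}$ independent of $\Omega$, for any $\varphi\in(0,\phi)$. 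Uniqueness of $(T,u_T)$ is inherited: $T$ is unique by Theorem \ref{Thm: uniform Schauder estimate T}, and once $T$ is fixed the elasticity problem has a unique solution by Theorem \ref{Thm: schauder estimates u}(i). Finally, since $u_T\in\Ck{3,\varphi}{\overline{\Omega}}{3}$ and $T\in\C{2,\phi}{\overline{\Omega}}$, the pair lies in $\C{2,\phi}{\overline{\Omega}}\oplus\Ck{3,\varphi}{\overline{\Omega}}{3}\subset V_P(\Omega)$ after the obvious identifications, giving the closing remark.

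The main obstacle I anticipate is the uniform extension step and its interaction with the bootstrap in \eqref{eq: estimate u_{T}}: one must ensure that $E$ can be built with $\O$-uniform operator norm — which is exactly where the $C^{4,\alpha}$-regularity of $\Omega_b$ and the uniform control of hemisphere transformations from Lemma \ref{Lemma: uniform hemisphere condition} (together with the bi-Lipschitz bound \eqref{eq: (6.29) Gilbarg/Trudinger}) are indispensable — and that the extension does not disturb the boundary trace $(T-T_0)\cdot\nu$ that actually enters $\tilde g$ (it does not, since $\tilde T=T$ on $\overline{\Omega}$). Everything else is a matter of chaining the already-established uniform constants; no genuinely new estimate is required, only careful bookkeeping to verify that each constant that appears is of the form $C_{n,\phi,l,\mathscr{L},\kappa,\Omega_b,K,k_1,k_2}$ and hence independent of the particular admissible shape.
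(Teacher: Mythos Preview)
Your proposal is correct and follows essentially the same route as the paper: invoke the uniform bound $\Norm{T}{\C{2,\phi}{\Omega}}\leq C^t$, feed it into inequality \eqref{eq: estimate u_{T}}, choose $\varepsilon$ with $\varepsilon C<1$, and bound each remaining term ($C_{f,g}$, $\Norm{\nabla T}{\Ck{1,\phi}{\Omega}{3}}$, $\Norm{\nu}{\Ck{2,\phi}{\RO}{3}}$) uniformly over $\O$. The one difference is that the paper's proof does \emph{not} carry out the uniform extension of $T$ to $\overline{\Oext}$ at this stage---although the paper flags the issue in the preamble to the subsection, in the actual proof it simply works with the norms over $\Omega$ and $\RO$ that appear in \eqref{eq: estimate u_{T}} and defers the extension machinery to Section \ref{sec:OpShapes}; your added extension step is harmless and arguably more careful, but not needed for this theorem.
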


\begin{proof}
The existence of the unique solutions $ T\in \C{2,\phi}{\overline{\Omega}} $ and $ u \in \Ck{3,\varphi}{\overline{\Omega}}{3} $ follows directly from Theorem 6.31 in \cite{GilbTrud} and Theorem \ref{Thm: schauder estimates u}, confer Theorem 5.6 and 5.7 in \cite{Hanno}, too.
By \eqref{eq: estimate u_{T}} we conclude that for $ \varepsilon>0 $ small enough such that $\varepsilon C<1$ 
\begin{equation*}
\begin{split}
\Norm{u_{T}}{\Ck{3,\phi}{\Omega}{3}}\leq& \tfrac{C_{f,g}}{1-\varepsilon C}+ \tfrac{C\rho(3\lambda+2 \mu)}{1-\varepsilon C}\left(\Norm{\nabla T}{\Ck{1,\phi}{\Omega}{3}} +(|T_{0}|+\Norm{T}{\C{2,\phi}{\RO}})\tilde{C}\right)
\leq  C^{\ast},
\end{split}
\end{equation*}
since $C$ was chosen uniformly, see Lemma \ref{Lemma: Schauder estimate T}. The constant $ C_{f,g} $ can be chosen to be the same for any $ \Omega \in \O $ due to the choice of $ f $ and $ g\in \mathcal{E} $. The letter is also true for $ \tilde{C} $ as already mentioned in the beginning of Section \ref{Subsec: solutions for heat equation}. Additionally, $ \Norm{\nabla T}{\Ck{1,\phi}{\Omega}{3}}\leq c\Norm{(\nabla T)_{i}}{\C{1,\phi}{\Omega}}\leq \Norm{T}{\C{2,\phi}{\Omega}} \leq C^{t}$ uniformly, because the constant $ c $ results from the equivalence of Norms on $ \R{3} $ and is independent of $ \Omega $. The starting temperature $ T_{0} $ is constant. Therefore, $ C^{\ast}$ can be chosen to be uniform w.r.t. $ \O $. 
\end{proof}



\section{\label{sec:OpShapes}Existence of Optimal Shapes}\label{Sec: exsistence of optimal shapes}

In this section we prove existence of optimal solutions to shape optimization problems where the constraints are given by thermal elasticity, see Section \ref{Sec: PDE constraints}, and the cost functionals are of very general class. Since they are to singular to be defined on base of weak solutions we have to resort to regularity theory and strong solutions: These functionals include surface integrals which lead to a loss of regularity according to the appearing derivatives of $ u $ and $ T $ and the trace theorem\footnote{Confer for example \cite[5.5]{Evans}}. 

\begin{notation}
The objective is to find an optimal shape $ \Omega=\Phi(\Omega_{b}) $ within the set of $ C^{4,\alpha} $-admissible shapes $ \O  $, defined in \ref{Def: k-adm domains}, which minimizes a local cost functional $ \mathcal{J}(\Omega,u,T) =\mathcal{J}_{vol}(\Omega,u,T) +\mathcal{J}_{sur}(\Omega,u,T) $, where
\begin{equation}\label{Def: local cost functional}
\begin{split}
	\mathcal{J}_{vol}(\Omega,u,T)&=\int_{\Omega}\mathcal{F}_{vol}(x,T,\nabla T,\nabla^2 T, u,\nabla u,\nabla^{2}u,\nabla^{3}u)dx\\[1ex]
	\mathcal{J}_{sur}(\Omega,u,T)&=\int_{\RO}\mathcal{F}_{sur}(x,T,\nabla T,\nabla^2 T,u,\nabla u,\nabla^{2}u,\nabla^{3}u)dA.
\end{split}
\end{equation}
Here, the tuple $ (T,u) $ solves the state Problem $ \mathcal{P}(\Omega) $.
\end{notation}

In Sections \ref{Sec: form_design} to \ref{Sec: uniform schauder estimates} we prepared the application of Theorem \ref{Thm: existence of optimal shapes} in the present constellation. Now, we are able to proof that the graph $ \mathcal{G}=\{(\Omega,T,u) \, \vert \, \Omega\in \O \} $ is compact in the sense of \eqref{Def: Gcomp}. In the following definition we therefor choose $ q=2,\, \beta=\phi' $ respectively $q=3,\, \beta=\varphi' $.

\begin{defn}[\textbf{$ C^{q,\beta} $ Convergence of functions on Variable Domains}]\label{Def: Conv. on var. domains} $  $ \\
Recall the sets $ \O $ and $ \Oext $ of Section \ref{Sec: form_design}. Let $ m,q \,\in \N{} $, $ \beta \in (0,1) $ be fixed.
\begin{itemize}
\item[\textit{i)}] Let
$ 
p^{m,q,\beta}_{\Omega}:\big[\C{q,\beta}{\overline{\Omega}}\big]^{m} \to \big[C^{q,\beta}_{0}(\Oext) \big]^{m} 
$ 
be the extension operator that exists by Lemma \ref{Lemma: extension Lemma 2}. If $ v \in \big[ \C{q,\beta}{\overline{\Omega}}\big]^{m} $ set $ v^{ext} =p^{m,q,\beta}_{\Omega} v$. 
\item[\textit{ii)}]
Let $ \seq{\Phi_{n}} \subset \Uad,\, \Phi \in \Uad $ and $ \Omega_{n}:=\Phi_{n}(\Omega_{b})\in \O,\, \Omega=\Phi(\Omega_{b})\in \O $. For $ \seq{u_{n}} $ with $ u_{n} \in \big[\C{q,\beta}{\overline{\Omega_{n}}}\big]^{m},\, n\in \N{} $ and $ u\in \big[\C{q,\beta}{\overline{\Omega}}\big]^{m} $ we define the expression $ u_{n} \rightsquigarrow u $ as $ n\to \infty $ by $ u_{n}^{ext}\to u^{ext} $ in $ \big[C^{q,\beta}_{0}(\Oext) \big]^{m} $.
\end{itemize}
\end{defn}

\begin{lem}
Let $ \Omega \in \O $ be a $ C^{4,\alpha} $ admissible shape for some $ \alpha \in(0,1) $ and suppose $ v\in \C{q,\beta}{\overline{\Omega}} $, where $1\leq q+\beta \leq 4+\alpha $. Then there exists a function $ w\in C^{q,\beta}_{0}(\Oext) $ and a constant $ C_{q}>0 $ such that $ w=v $ in $ \Oext $ and 
\[ 
\Norm{w}{\C{q,\beta}{\Oext}}\leq C \Norm{v}{\C{q,\beta}{\Omega}}
\]
where $ C=C_{q} $ is independent of $ \Omega $ and $ \Oext $.
\end{lem}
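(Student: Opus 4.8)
The plan is to establish a uniform extension operator for $C^{q,\beta}$ functions on the family of admissible shapes $\mathcal{O}=\mathcal{O}_{4,\alpha}$, with the extension constant depending only on $q$ and on the fixed baseline data (the baseline shape $\Omega_b$, the bound $K$ on the deformation maps, the radii of the balls defining $\Oext$) but not on the particular $\Omega\in\mathcal{O}$. The natural route is to reduce the variable-domain problem to a \emph{fixed}-domain problem via the deformation maps, apply a classical extension theorem once on the baseline shape, and then transport the resulting extension forward, keeping careful track of all constants.

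First I would recall that every $\Omega\in\mathcal{O}$ is of the form $\Omega=\Phi(\Omega_b)$ for some $\Phi\in\Uad_{4,\alpha}$ with $\Norm{\Phi}{\Ck{4,\alpha}{\Oext}{3}}\leq K$ and $\Norm{\Phi^{-1}}{\Ck{4,\alpha}{\Oext}{3}}\leq K$. Given $v\in\C{q,\beta}{\overline{\Omega}}$, pull it back to $\tilde v:=v\circ\Phi\in\C{q,\beta}{\overline{\Omega_b}}$; since $1\leq q+\beta\leq 4+\alpha$ and $\Phi$ is a $C^{4,\alpha}$-diffeomorphism, the chain rule (and the fact that $\Phi,\Phi^{-1}$ are bounded in $C^{4,\alpha}$ by $K$) gives $\Norm{\tilde v}{\C{q,\beta}{\Omega_b}}\leq C_1(q,K)\,\Norm{v}{\C{q,\beta}{\Omega}}$ with $C_1$ independent of $\Omega$. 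Next, since $\Omega_b$ is a fixed $C^{4,\alpha}$ domain (hence has a uniformly regular boundary, in particular a $C^{0,1}$ boundary with the cone property), apply a standard Whitney/Stein-type extension theorem for Hölder spaces on a fixed bounded Lipschitz — in fact $C^{4,\alpha}$ — domain: there is a linear operator $E_{\Omega_b}$ and a constant $C_2=C_2(q,\Omega_b)$ such that $E_{\Omega_b}\tilde v\in C^{q,\beta}(\R{3})$, $E_{\Omega_b}\tilde v=\tilde v$ on $\overline{\Omega_b}$, and $\Norm{E_{\Omega_b}\tilde v}{C^{q,\beta}(\R{3})}\leq C_2\,\Norm{\tilde v}{\C{q,\beta}{\Omega_b}}$. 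Multiplying by a fixed cutoff $\chi\in C_c^\infty(\R{3})$ that is $\equiv 1$ on a neighborhood of $\overline{\Omega_b}$ and supported inside the baseline ball, we may assume the extension is compactly supported, with the support contained in a fixed ball $B_R$ depending only on the baseline data; call the result $\tilde w\in C^{q,\beta}_0(\R{3})$.

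Finally I would push $\tilde w$ forward to a genuine extension on $\Oext$: set $w:=\tilde w\circ\Phi^{-1}$ on $\Phi(\operatorname{supp}\chi)$ and extend by zero elsewhere in $\Oext$. Because $\chi\equiv 1$ near $\overline{\Omega_b}$, we have $w=v$ on $\overline{\Omega}$; because $\Phi^{-1}$ again satisfies the uniform $C^{4,\alpha}$ bound $K$, the chain rule yields $\Norm{w}{\C{q,\beta}{\Oext}}\leq C_3(q,K)\,\Norm{\tilde w}{C^{q,\beta}(\R{3})}$; and since $\operatorname{supp}(\tilde w)$ lies in a fixed ball, $\Phi$ is a global diffeomorphism of $\overline{\Oext}$ bounded by $K$, so $\operatorname{supp}(w)\subset\Oext$ (this is exactly the reason $\Oext$ was chosen as a ball of radius $R$ large enough to contain all $\overline{\Phi(\Omega_b)}$), hence $w\in C^{q,\beta}_0(\Oext)$. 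Chaining the three inequalities gives $\Norm{w}{\C{q,\beta}{\Oext}}\leq C_q\,\Norm{v}{\C{q,\beta}{\Omega}}$ with $C_q:=C_1C_2C_3$ depending only on $q$ and the baseline data, as claimed; the linearity of $p^{m,q,\beta}_\Omega:=v\mapsto w$ is inherited from the linearity of pullback, $E_{\Omega_b}$, multiplication by $\chi$, and pushforward, and the vector-valued case $m>1$ follows by applying the construction componentwise.

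The main obstacle is the bookkeeping of constants under the composition with $\Phi$ and $\Phi^{-1}$: one must verify that composing a $C^{q,\beta}$ function with a $C^{4,\alpha}$-diffeomorphism whose $C^{4,\alpha}$-norm (and that of its inverse) is bounded by $K$ produces a $C^{q,\beta}$ function with norm controlled by a constant depending only on $q$ and $K$ — this uses $q+\beta\leq 4+\alpha$ in an essential way, since derivatives of the composition up to order $q$ involve derivatives of $\Phi$ up to order $q\leq 4$, and the Hölder seminorm of order $q$ needs the $\alpha$-Hölder (indeed Lipschitz, via the lower bound in \eqref{eq: (6.29) Gilbarg/Trudinger}) control of the top derivatives of $\Phi$. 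A secondary point requiring a line of care is ensuring the pushed-forward extension remains compactly supported inside $\Oext$ and vanishes in a neighborhood of $\partial\Oext$, which is why the cutoff $\chi$ is introduced before transporting; everything else is routine.
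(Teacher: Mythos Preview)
Your proposal is correct and follows essentially the same strategy as the paper: pull back $v$ to the fixed baseline domain $\Omega_b$ via $\Phi$, apply a classical extension operator there (the paper re-derives it from the reflection construction of \cite[Lemma~6.37]{GilbTrud} rather than citing it as a black box), and then push forward via $\Phi^{-1}$, controlling all composition constants through the uniform $C^{4,\alpha}$ bound $K$ on $\Phi$ and $\Phi^{-1}$ via \cite[(6.30)]{GilbTrud}. The only cosmetic difference is that you insert an explicit cutoff $\chi$ to force compact support before transporting, whereas the paper absorbs this into the $C^{q,\beta}_0$ output of the Gilbarg--Trudinger extension lemma; your observation that $\Phi$ is a global diffeomorphism of $\overline{\Oext}$ onto itself is exactly what guarantees the transported support stays inside $\Oext$.
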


\begin{proof}
Confer the proof of Lemma \ref{Lemma: extension Lemma 1} (see \cite{GilbTrud}) and substitute $ \Omega' $ by $ \Oext $ and $ \Omega $ by $ \Omega_{b} $. Furthermore, we replace the $ C^{k,\varphi} $ diffeomorphism $ \psi $ by the hemisphere transformations $ \mathcal{T}_{x_{0}}: \Umg{d}{x_{0}} \to \Sigma_{R} $, $ x_{0}\in \partial \Omega_{b} $ introduced in Lemma \ref{Lemma: uniform hemisphere condition}, \eqref{eq: hemisphere Transformations Omega_b }. Then, $ G^{+}=\Sigma_{R} $ and $ G=\Umg{R}{0}\supset \Sigma_{R} $ is a ball with radius $ R $ at the origin of ordinates. For $ u\in \C{q,\beta}{\overline{\Omega_{b}}} $, $ q+\beta\leq 4+\alpha $ one sets $ \tilde{u}(y)=u \circ \mathcal{T}_{x_{0}}(y) $, where $ y=(y',y_{3}) $, $ y'=(y_{1},y_{2}) $. We follow \cite{GilbTrud} and define an extension into $ y_{3}<0 $ by
\[
\tilde{u}(y',y_{3})=\sum_{i=1}^{q}c_{i}\tilde{u}(y',-y_{3}/i),~, y_{3}<0,~~~
\sum_{i=1}^{q}c_{i}\left(\nicefrac{-1}{i}\right)^{m},~~m=0,\ldots, q+1.
\]
Furthermore, the consulted proof shows that $ w=\tilde{u}\circ \mathcal{T}_{x_{0}}^{-1} $ provides a $ C^{4,\alpha} $-extension of $ u $ onto $ \Omega_{b}\cup B(x_{0}) $ for some balls $ B(x_{0}) $ and the related hemisphere transformation. A finite covering argument of $ \partial \Omega_{b} $ and an associated partition of unity leads to the sought extension $ w\in C^{q,\beta}_{0}{\Oext} $. The fact, that the constant $ C $ appearing in the inequality

\begin{equation}\label{eq: norm of extension}
\Norm{w}{\C{q,\beta}{\Omega'}}\leq C\Norm{u}{\C{q,\beta}{\Omega}}
\end{equation}  

\noindent only depends on $ q,\Omega',\Omega $ is owed to inequalities (6.29) and (6.30) in \cite{GilbTrud}, confer the proof of Lemma 6.37. So, in our case, it depends mainly on $ \Oext $ and $ \Omega_{b} $, which are both fixed sets.

Given a function $ v\in \C{q,\beta,}{\Omega} $, $ 1\leq q+\beta\leq 4+\alpha $ on $ \Omega=\Phi(\Omega_{b}) $ for some $ \Phi \in \Uad $ the mapping $ u=v \circ \Phi $ defines a $C^{q,\beta}$ function on $ \Omega_{b} $. Then, an extension $ w\in \C{q,\beta}{\Oext} $ can be defined as described above. Hence, $ w=v \circ \Phi $ on $ \Omega_{b} $ and $ w\circ \Phi^{-1}=v $ on $ \Omega $ is an extension of $ v $, since $ \Phi $ is a $ C^{4,\alpha} $ diffoemorphism on $ \Oext $. By application of (6.30) \cite{GilbTrud} and \eqref{eq: norm of extension} we receive
\[ \Norm{w\circ \Phi^{-1}}{\C{q,\beta}{\Oext}}\leq \mathcal{C} \Norm{w}{\C{q,\beta}{\Oext}}\leq \mathcal{C}C\Norm{v \circ \Phi}{\C{q,\beta}{\Omega_{b}}}\leq \mathcal{C}^{2}C\Norm{v}{\C{q,\beta}{\Omega}}, \]
for a suitable positive constant $\mathcal{C} $ that can be chosen uniformly w.r.t $ \Uad $ due to it's construction.
\end{proof}

\begin{lem}[\textit{\textbf{Compactness of the Graph}}]\label{Lemma: compactness of the graph}
Let $ \seq{\Omega_{n}}=\seq{\Phi_{n}(\Omega_{b})} \subset \O $ be an arbitrary sequence, where on any $ \Omega_{n} $ the setting of Theorem \ref{Thm: uniform Schauder estimate u_{T}} is given. By $ (T_{n},u_{n})\in V_{P}(\Omega_{n}) $ we denote the corresponding solutions to the state problem $\mathcal{P}(\Omega_{n}) $. Then, the sequence $ \seq{\Omega_{n},T_{n},u_{n}} $ has a subsequence $\subseq{\Omega_{n_{k}},T_{n_{k}},u_{n_{k}}}$ such that $ \Omega_{n} \xrightarrow{\O} \Omega $, $ \Omega=\Phi(\Omega_{b}) $ as $ k\to \infty $, as well as $ T_{n_{k}}\underset{k\to \infty}{\rightsquigarrow} T $ and $ u_{n_{k}}\underset{k\to \infty}{\rightsquigarrow} u $ for the corresponding solutions $ T $ and $ u $ to $ \mathcal{P}(\Omega) $, where $ (T,u)\in V_{P}(\Omega) $.
\end{lem}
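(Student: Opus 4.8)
The plan is to establish graph compactness along the usual three-step route: first extract a convergent subsequence of deformation maps, then use the uniform Schauder estimates together with uniform extension operators and the Arzel\`a--Ascoli theorem for H\"older spaces to extract convergent subsequences of the (extended) states, and finally identify the resulting limit pair as the unique solution of the state problem on the limit shape, passing to the limit after pulling everything back to the fixed baseline domain $\Omega_{b}$. Concretely, I would start from $\seq{\Phi_{n}}\subset\Uad=\Uad_{4,\alpha}$ and apply Lemma \ref{Lemma: compactnes Uad} to obtain a subsequence (not relabelled) with $\Phi_{n}\to\Phi$ in $\Ck{4,\alpha'}{\overline{\Oext}}{3}$ for some $\alpha'\in(0,\alpha)$, $\Phi\in\Uad$, and $\Phi_{n}^{-1}\to\Phi^{-1}$ as well; setting $\Omega:=\Phi(\Omega_{b})$ this is precisely $\On\xrightarrow{\O}\Omega$. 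I then pull the solutions back to $\Omega_{b}$ by setting $\tilde T_{n}:=T_{n}\circ\Phi_{n}$ and $\tilde u_{n}:=u_{n}\circ\Phi_{n}$ on $\Omega_{b}$. By the chain rule, $\tilde T_{n}$ solves a transformed strictly elliptic equation with a transformed Robin condition on $\partial\Omega_{b}$ and $\tilde u_{n}$ solves a transformed elasticity system; the coefficients and boundary data of these transformed problems are built from $\Phi_{n},\Phi_{n}^{-1}$ and their derivatives, together with the fixed data $f$, $g=\bar g(\On)$, $\eta=\bar\eta(\On)$, $T_{e}$ (which converge to the data on $\Omega$, e.g. when $g$ and $\eta$ are restrictions of fixed functions on $\overline{\Oext}$ as in the Examples of Section \ref{sec:UnifSchauder}), and hence converge in the relevant H\"older norms on $\Omega_{b}$ to those of the transformed problem for $\Phi$.

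Next, Theorem \ref{Thm: uniform Schauder estimate u_{T}} provides $\Norm{T_{n}}{\C{2,\phi}{\On}}\leq C^{t}$ and $\Norm{u_{n}}{\Ck{3,\varphi}{\On}{3}}\leq C^{t/el}$ with constants independent of $n$. Applying the extension operators $p^{1,2,\phi}_{\On}$ and $p^{3,3,\varphi}_{\On}$ of Lemma \ref{Lemma: extension Lemma 2} — whose operator norms are uniform in $n$ because, as in the preceding extension lemma, they factor through the fixed data of $\Omega_{b}$ precomposed with $\Phi_{n}^{-1}$ — yields $T_{n}^{ext}\in C^{2,\phi}_{0}(\Oext)$ and $u_{n}^{ext}\in[C^{3,\varphi}_{0}(\Oext)]^{3}$ with $n$-independent bounds. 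Since the embeddings $C^{2,\phi}_{0}(\Oext)\hookrightarrow C^{2,\phi'}_{0}(\Oext)$ and $[C^{3,\varphi}_{0}(\Oext)]^{3}\hookrightarrow[C^{3,\varphi'}_{0}(\Oext)]^{3}$ on the \emph{fixed} domain $\Oext$ are compact (Arzel\`a--Ascoli for H\"older spaces, \cite[6.36]{RO}, \cite[Section 8]{Alt}), a further subsequence gives $T_{n_{k}}^{ext}\to\hat T$ in $C^{2,\phi'}_{0}(\Oext)$ and $u_{n_{k}}^{ext}\to\hat u$ in $[C^{3,\varphi'}_{0}(\Oext)]^{3}$. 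Restricting back to $\Omega_{b}$ through $\Phi_{n_{k}}$ and using continuity of composition on these H\"older spaces, $\tilde T_{n_{k}}\to\hat T\circ\Phi=:\tilde T$ in $\C{2,\phi'}{\Omega_{b}}$ and $\tilde u_{n_{k}}\to\hat u\circ\Phi=:\tilde u$ in $\Ck{3,\varphi'}{\Omega_{b}}{3}$.

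It remains to identify the limits. Passing to the limit in the transformed heat problem on the fixed domain $\Omega_{b}$ (coefficients and transformed Robin data converge, $\tilde T_{n_{k}}\to\tilde T$ in $\C{2,\phi'}{\Omega_{b}}$) shows that $\tilde T$ solves the transformed problem for $\Phi$, i.e. $T:=\hat T\restriction_{\overline{\Omega}}=\tilde T\circ\Phi^{-1}$ solves \eqref{T} on $\Omega$. Then the pulled-back right-hand sides $\tilde f_{n_{k}}=f-\rho(3\lambda+2\mu)\nabla T_{n_{k}}$ and surface data $\tilde g_{n_{k}}=g+\rho(3\lambda+2\mu)(T_{n_{k}}-T_{0})\cdot\nu_{n_{k}}$ converge in the corresponding $\Ck{1,\phi'}{\cdot}{3}$ and $\Ck{2,\phi'}{\cdot}{3}$ norms, so passing to the limit in the transformed elasticity system gives that $u:=\hat u\restriction_{\overline{\Omega}}=\tilde u\circ\Phi^{-1}$ solves \eqref{PTO} on $\Omega$ with temperature field $T$ (the Dirichlet condition on $\partial\Omega_{D}=\partial\Phi(B)$ passing to the limit trivially). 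By the uniqueness statement of Theorem \ref{Thm: uniform Schauder estimate u_{T}}, $(T,u)$ is \emph{the} solution in $V_{P}(\Omega)$ associated with $\mathcal P(\Omega)$. Finally, since the extension operators $p_{\On}$ and $p_{\Omega}$ factor through $\Omega_{b}$ and depend continuously on $\Phi_{n}$ and $\Phi$, the limit of the extensions equals the extension of the limit, so $\hat T$ and $\hat u$ are exactly the admissible extensions of $T$ and $u$; hence $T_{n_{k}}\rightsquigarrow T$ and $u_{n_{k}}\rightsquigarrow u$ in the sense of Definition \ref{Def: Conv. on var. domains}, which is the assertion.

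The step I expect to be the main obstacle is the identification of the limit as the solution on $\Omega$: one must make precise the pull-back of both the elliptic PDE and the Robin boundary condition to $\Omega_{b}$, verify that the transformed coefficients and boundary data converge in the correct H\"older norms as $\Phi_{n}\to\Phi$ (including a uniform lower bound on the ellipticity constant of the transformed operators, as in the proof of Lemma \ref{Lemma: Schauder estimate T}), and then pass to the limit — all while the original $T_{n},u_{n}$ live on moving domains. The auxiliary claim that $\rightsquigarrow$-convergence holds with the \emph{specific} extension operators attached to each $\On$ also has to be checked carefully, although, as indicated, it reduces to the corresponding continuity statement on the fixed domain $\Omega_{b}$.
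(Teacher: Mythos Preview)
Your proposal is correct and follows the same overall architecture as the paper's proof: compactness of $\Uad$ gives a convergent subsequence of shapes, the uniform Schauder bounds of Theorem \ref{Thm: uniform Schauder estimate u_{T}} combined with the uniform extension Lemma and the compact embedding $C^{k,\phi}\hookrightarrow C^{k,\phi'}$ on the fixed domain $\Oext$ give convergent subsequences of the extended states, and uniqueness identifies the limit as the solution on $\Omega$.

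The genuine difference is in the identification step. The paper does \emph{not} pull the problems back to $\Omega_{b}$ and pass to the limit in a transformed PDE. Instead it argues directly on $\Oext$: since $T_{n_{j}}^{ext}\to T^{ext,\ast}$ in $\C{2,\phi'}{\Oext}$, all derivatives up to second order converge uniformly on $\overline{\Oext}$, so the interior equation $\Delta T^{ext,\ast}=0$ holds at every point of $\Omega$ and the Robin condition passes to the limit along boundary points $\Phi_{n_{j}}(\Phi^{-1}(x))\to x\in\RO$ (using that the normals converge because $\Phi_{n_{j}}\to\Phi$ in $C^{4,\alpha'}$); the same is then repeated for $u$. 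This is shorter and avoids writing down the transformed operators and checking convergence of their coefficients, but leaves the boundary-condition passage implicit. Your pull-back route makes precisely this step explicit and rigorous, at the cost of having to verify (as you note) uniform ellipticity of the transformed operators and H\"older convergence of all transformed coefficients and data; both are routine consequences of $\Phi_{n}\to\Phi$ in $\Ck{4,\alpha'}{\overline{\Oext}}{3}$.

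Your final worry --- that the limit $\hat T$ of the \emph{specific} extensions $p_{\Omega_{n_{k}}}T_{n_{k}}$ need not coincide with $p_{\Omega}T$ on all of $\Oext$ --- is well spotted. The paper does not verify this either; it only shows that $T^{ext,\ast}\restriction_{\overline\Omega}=T$. For the downstream application (Lemma \ref{Lemma:Continuity}) only the values on $\overline\Omega$ matter, so this is harmless, but strictly speaking the convergence $T_{n_{k}}\rightsquigarrow T$ in the exact sense of Definition \ref{Def: Conv. on var. domains} requires the additional continuity of $\Phi\mapsto p_{\Phi(\Omega_{b})}$ that you sketch; your observation that the extension factors through the fixed $\Omega_{b}$-extension composed with $\Phi^{-1}$ is exactly the right way to obtain it.
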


\begin{proof}
In terms of Lemma \ref{Lemma: compactnes Uad} there is a convergent subsequence $ (\Phi_{n_{l}})_{l\in \N{}}$ tending to some $ \Phi \in \C{4,\alpha}{\overline{\Oext}} $ as $ l\to \infty $ concerning $ \Norm{.}{\Ck{k,\alpha'}{\Oext}{3}} $, $ 0\leq\alpha'<\alpha $. Hence, $\Omega_{n}\xrightarrow{\O}\Omega:=\Phi(\Omega_{b}) $, when passing to the limit. According to Theorem \ref{Thm: uniform Schauder estimate T} $ T_{n_{l}}\in \C{2,\phi}{\overline{\Omega_{n_{l}}}} $ for every $ n_{l} \in \N{} $. Moreover, this theorem leads to $ \Norm{T_{n_{l}}}{\C{2,\phi}{\overline{\Omega_{n_{l}}}}}\leq C^{t} $ for every $ n_{l} $. Let $ p_{\Omega_{n_{l}}}:=p_{\Omega_{n_{l}}}^{1,2,\phi} $ be the extension operator in Definition \ref{Def: Conv. on var. domains} and set $ T_{n_{l}}^{ext}:=p_{\Omega_{n_{l}}}T_{n_{l}} $. Because of Lemma \ref{Lemma: extension Lemma 1} there is a constant $ C^{ext} $ such that $ \Norm{T_{n_{l}}^{ext}}{\C{2,\phi}{\Oext}}\leq C^{ext}\Norm{T_{n_{l}}}{\C{2,\phi}{\Omega_{n_{l}}}} \leq C^{ext}C^{t}$. The constant $ C^{ext} $ can again be chosen uniformly with regard to $ \O $, what was shown in the previous lemma. This results in a uniform bound for all $ \Norm{T_{n_{l}}^{ext}}{\C{2,\phi}{\Oext}} $. By means of Lemma \ref{Lemma: compacness of c_k_alpha}, there is again a convergent subsequence $ T^{ext}_{n_{j}}\to T^{ext,*}$ as $ j \to \infty $ concerning $ \Norm{.}{\C{2,\phi'}{\Oext}} $. The limit $ T^{ext,*}\in \C{2,\phi'}{\overline{\Oext}} $ even is an element of $ \C{2,\phi}{\overline{\Omega}} $, see the proof of Lemma \ref{Lemma: compactnes Uad}. Thus, 
\[ 
\Omega_{n_{j}}\xrightarrow{\O}\Omega ,\, T_{n_{j}}^{ext}\xrightarrow{C^{2,\phi'}}T^{ext,*} ~~ \text{for some } T^{ext,*} \in \C{2,\phi}{\overline{\Omega}}.
\]
Because, in particular, all derivatives of $ T_{n_{j}}^{ext} $ up to order two converge to those of $ T^{ext,*} $ as $ j\to \infty $ the function $ T^{ext,*}\restriction_{\Omega} $ solves \eqref{T}. For Theorem \ref{Thm: uniform Schauder estimate T} holds, \eqref{T} has a unique solution $ T $, and $ T^{ext,*} $ has to be an extension of $ T $ to $ \overline{\Oext} $.

We apply the same arguments to the solutions $ u_{n_{j}}\in \Ck{3,\varphi}{\Omega_{n_{j}}}{3} $ that are allocated to $ \Phi_{n_{j}}$ and $T^{ext}_{n_{j}} $. In this way, we finally obtain a further subsequence $(\Omega_{n_{k}},T^{ext}_{n_{k}},u^{ext}_{n_{k}})_{k\in \N{}}  $, where $ u^{ext}_{n_{k}}=p_{\Omega_{n_{k}}}^{3,3,\varphi}u_{n_{k}} $ and
\begin{equation*}
		\Omega_{n_{k}} \overset{\O}{\longrightarrow}\Omega,~~
		T^{ext}_{n_{k}}\xrightarrow{C^{2,\phi'}} T^{ext,*},~~ 
		u^{ext}_{n_{k}} \xrightarrow{C^{3,\varphi'}} u^{ext,*} 
		~~ \text{for some } u^{ext,*} \in \C{3,\varphi}{\overline{\Omega}}.
\end{equation*}
\noindent Actually $ (T^{ext,*}, u^{ext,*})\restriction_{\Omega} $ solves \eqref{PTO}, because $T^{ext,*}\restriction_{\Omega}$ solves \eqref{T} and $ u^{ext}_{u_{l}}\xrightarrow{C^{3}} u^{ext,*} $, where $ (T^{ext,*}, u^{ext,*}) $ is an extension of the unique solution $ (T,u) $ of \eqref{PTO}.
\end{proof}

\begin{lem}[\textbf{\textit{Continuity of Local Cost Funktionals}}]
\label{Lemma:Continuity}
Let $ \mathcal{F}_{vol},\, \mathcal{F}_{sur} \in C^{0}(\R{d})$ with $ d=3+\sum_{j=0}^{s}3^j+\sum_{j=0}^{r}3^{j+1},\, s=2,r=3 $, and let the set $ \O $ only consist of $ C^{0} $-admissible shapes. For $ \Omega \in \O $, $ T\in\C{2}{\overline{\Omega}} $ and $ u\in \C{3}{\overline{\Omega}} $ consider the volume integral $  J_{vol}(\Omega,T,u) $ and the surface integral $ J_{sur}(\Omega,T,u) $ of \eqref{Def: local cost functional}.\\[1ex]
Let $ \seq{\Omega_{n}}\in\O $ with $ \Omega_{n} \xrightarrow{\O}\Omega $ as $ n \to \infty $, $ \seq{u_{n}}\in \Ck{3}{\overline{\Omega_{n}}}{3} $ be a sequence with $ u_{n}\rightsquigarrow u $ $ (q=3,\, \beta=0) $ and $ \seq{T_{n}}\in \C{2}{\overline{\Omega_{n}}} $ be a sequence with $ T_{n}\rightsquigarrow T $ $ (q=2,\, \beta=0) $. Then,

\begin{itemize}
\item[(i)] $\mathcal{J}_{vol}(\Omega_{n},T_{n},u_{n})\to J_{vol}(\Omega,T,u)$ as $ n\to \infty $.
\item[(ii)] If the family $ \O $ consists only of $C^{1} $-admissible shapes, then $\mathcal{J}_{sur}(\Omega_{n},T_{n},u_{n})\to J_{sur}(\Omega,T,u)$ as $ n\to \infty $.
\end{itemize}
\end{lem}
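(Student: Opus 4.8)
The plan is to reduce both statements to uniform convergence of the relevant derivatives on a fixed bounded region, after which continuity of the integrands $\mathcal{F}_{vol}$ and $\mathcal{F}_{sur}$ and the change-of-variables formula do the rest. First I would transport everything onto the fixed baseline domain $\Omega_b$ (or rather onto $\Oext$). For (i), write $\Omega_n=\Phi_n(\Omega_b)$ and $\Omega=\Phi(\Omega_b)$, and substitute $x=\Phi_n(y)$ in the integral $\mathcal{J}_{vol}(\Omega_n,T_n,u_n)$, so that
\begin{equation*}
\mathcal{J}_{vol}(\Omega_n,T_n,u_n)=\int_{\Omega_b}\mathcal{F}_{vol}\bigl(\Phi_n(y),T_n^{ext}(\Phi_n(y)),\nabla T_n^{ext}(\Phi_n(y)),\ldots,\nabla^3 u_n^{ext}(\Phi_n(y))\bigr)\,\bigl|\det\nabla\Phi_n(y)\bigr|\,dy.
\end{equation*}
By hypothesis $T_n\rightsquigarrow T$ and $u_n\rightsquigarrow u$ mean $T_n^{ext}\to T^{ext}$ in $C^{2}_0(\Oext)$ and $u_n^{ext}\to u^{ext}$ in $C^{3}_0(\Oext)$, and $\Phi_n\to\Phi$ in $C^{4}(\overline{\Oext})$ (in particular uniformly together with $\nabla\Phi_n$). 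Hence the composed functions $\nabla^j T_n^{ext}\circ\Phi_n$ and $\nabla^j u_n^{ext}\circ\Phi_n$ converge uniformly on $\overline{\Omega_b}$ to the corresponding quantities for the limit, and $|\det\nabla\Phi_n|\to|\det\nabla\Phi|$ uniformly. Since $\mathcal{F}_{vol}$ is continuous on $\R{d}$ and the arguments stay in a fixed compact set (all the sequences are uniformly bounded in $C^0$), the integrand converges uniformly on $\overline{\Omega_b}$; as $\Omega_b$ has finite measure, we may pass to the limit under the integral and undo the substitution to get $\mathcal{J}_{vol}(\Omega_n,T_n,u_n)\to\mathcal{J}_{vol}(\Omega,T,u)$.

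For (ii) the same idea applies but the surface measure must be transformed correctly. Parametrize $\partial\Omega_b$ by a finite atlas and a subordinate partition of unity; on each chart, if $\Psi$ is a local $C^1$ parametrization of a piece of $\partial\Omega_b$, then $\Phi_n\circ\Psi$ parametrizes the corresponding piece of $\partial\Omega_n$, and the surface element picks up the factor $\mathrm{Jac}_{\partial}(\Phi_n\circ\Psi)$ (the square root of the Gram determinant of the tangential derivatives of $\Phi_n\circ\Psi$). Because $\Phi_n\to\Phi$ in $C^{4}$, these tangential Jacobian factors converge uniformly, and they are bounded below away from zero uniformly in $n$ by the bi-Lipschitz bound \eqref{eq: (6.29) Gilbarg/Trudinger}, so no degeneracy occurs. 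Restricting the (already uniformly convergent) traces of $T_n^{ext}$, $u_n^{ext}$ and their derivatives up to the orders appearing in $\mathcal{F}_{sur}$ to $\partial\Omega_b$, and invoking continuity of $\mathcal{F}_{sur}$ on the fixed compact range of arguments, the integrand on $\partial\Omega_b$ converges uniformly; since $\partial\Omega_b$ has finite surface measure we again pass to the limit under the integral. This proves $\mathcal{J}_{sur}(\Omega_n,T_n,u_n)\to\mathcal{J}_{sur}(\Omega,T,u)$.

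The one point that needs a little care — and I expect it to be the main technical obstacle — is the interplay between the trace and the convergence of derivatives on the boundary. The functionals involve $\nabla^2 T$ and $\nabla^3 u$, and on $\partial\Omega$ one only controls these through the ambient $C^{2}_0(\Oext)$ and $C^{3}_0(\Oext)$ convergence of the extensions; the extension operators $p^{m,q,\beta}_\Omega$ from Definition~\ref{Def: Conv. on var. domains} are precisely what makes the boundary values of all these derivatives well-defined and convergent on the fixed set $\overline{\Oext}$, so that restriction to $\partial\Omega_b$ (or composition with $\Phi_n$ and then restriction) is a continuous operation. Once this is set up, everything reduces to: uniform convergence of a composition of uniformly convergent continuous functions, on a fixed domain of finite measure, combined with the change-of-variables formula for volume and surface integrals under the $C^{4}$-converging diffeomorphisms $\Phi_n$. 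No additional regularity beyond what is assumed is required, which is why $C^0$-admissibility of the shapes suffices for (i) and $C^1$-admissibility for (ii).
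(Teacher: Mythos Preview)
Your argument is correct and follows essentially the same route as the paper: pull the integrals back to the fixed baseline domain $\Omega_b$ (resp.\ $\partial\Omega_b$) via the diffeomorphisms $\Phi_n$, use the uniform convergence of the extensions $T_n^{ext},u_n^{ext}$ together with the convergence of $\Phi_n$ (and of the Gram determinants for (ii)) to obtain convergence of the transformed integrand on a fixed domain of finite measure, and pass to the limit; the paper phrases this last step via Lebesgue's dominated convergence rather than uniform convergence of the integrand, and uses a disjoint atlas instead of a partition of unity, but these are cosmetic differences.

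One small internal inconsistency worth fixing: your change-of-variables for (i) already uses $|\det\nabla\Phi_n|$ and the convergence $\nabla\Phi_n\to\nabla\Phi$, so it needs $\Phi_n\in C^{1}$, which contradicts your closing sentence that ``$C^0$-admissibility of the shapes suffices for (i)''. If you want to match the minimal hypothesis as stated, avoid the pull-back and instead write $\mathcal{J}_{vol}(\Omega_n,T_n,u_n)=\int_{\Oext}\chi_{\Omega_n}(x)\,\mathcal{F}_{vol}\bigl(x,T_n^{ext}(x),\ldots,\nabla^3 u_n^{ext}(x)\bigr)\,dx$ on the fixed set $\Oext$, and combine the uniform convergence of the integrand with $\chi_{\Omega_n}\to\chi_\Omega$ a.e.\ and a uniform bound to apply dominated convergence.
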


\begin{proof}
(i) Statement $ (i) $ can be proofed in the same way as Lemma 6.3 in \cite{Hanno}.\\[1ex]
(ii) Because of its definition, every $ \Omega \in \O $ has a boundary that can be considered as a differentiable  2-dimensional submanifold in $ \R{3} $:
Let $ x_{0}=\Phi(z_{0}),\, z_{0}\in \partial\Omega_{b} $ be some point in the boundary. The mapping $  \T_{x_{0}}:B_{d}(x_{0})\cap \RO \to F_{R}\subset \R{2}\times \{0\}$ defines a chart for $ x_{0} $ if $ \T_{x_{0}},\,d $ and $ F_{R} $ are defined analogously to Lemma \ref{Lemma: uniform hemisphere condition}. Now we can choose $x_{i},i=1\ldots,l \in \N{} \, \in \RO$, such that $ \RO \subset \bigcup_{i=1}^{l} B_{d}(x_{i})\cap \RO $. By restriction of these mappings to carefully chosen sets $ \A^{i}\subset B_{d}(x_{0})\cap \RO  $, we can define an atlas $ (\A^{i})_{i=1,\ldots,l}  $ for $ \RO $, where $ \A^{i}\cap \A^{j}=\emptyset $. In this way, we can write the surface integral as a sum:

\begin{equation*}
\begin{split}
J_{sur}(\Omega_{n},T_{n},u_{n})
&=\int_{\RO_{n}}\mathcal{F}_{sur}(x,T_{n},\nabla T_{n},\nabla^2 T_{n},u,\nabla u,\nabla^{2}u,\nabla^{3}u)\, dA\\
&=\sum_{i=1}^{l}\int_{\A_{i}^{n}}\mathcal{F}_{sur}(x,T_{n},\nabla T_{n},\nabla^2 T_{n},u,\nabla u,\nabla^{2}u,\nabla^{3}u)\, dA\\
\end{split}
\end{equation*}

\noindent If we denote the chart-mappings by $h_{n}^{i}: \A^{i}_{n} \to \tilde{\A}^{i}_{n} $ and the correspinding Gram determinants by $ g^{h^{i}_{n}} $, we can write the integrals in the form
\[ 
\int_{\A_{i}^{n}}\mathcal{F}_{sur}\big(h^{i}_{n}(s),T_{n}(h^{i}_{n}(s)),\nabla T_{n}(h^{i}_{n}(s)),\nabla^2 T_{n}(h^{i}_{n}(s)),u(h^{i}_{n}(s)), 
\ldots,\nabla^{3}u(h^{i}_{n}(s))\big)\sqrt{g^{h^{i}_{n}}(s)} \, ds.
\]
Especially, the $ h_{n}^{i} $ corresponds to the inverse hemisphere transformations on $ \Omega_{n} $. Since $ \Phi\in C^{1}(\overline{\Oext})\leq K $ by some constant $ K $, which is the same for all $ \Omega \in \O $ and the hemisphere transformations on $ \Omega_{b} $ are uniformly bounded and the Gram determinant is a well. Because of $ \mathcal{F}_{sur}\in C^{0}(\R{d}) $, $ T_{n}\rightsquigarrow T $ as $ n\to \infty $ and $ u_{n}\rightsquigarrow u $ as $ n\to \infty $, there are constants $ C_{i} $ such that 
\[ 
\left| \mathcal{F}_{sur} \big(h^{i}_{n}(s),T_{n}(h^{i}_{n}(s)), \nabla T_{n}(h^{i}_{n}(s)),\nabla^2 T_{n}(h^{i}_{n}(s)),u(h^{i}_{n}(s)),\ldots, \nabla^{3}u(h^{i}_{n}(s))\big) \sqrt{g^{h^{i}_{n}}(s)}  \right| \leq C_{i}
\]
holds for all $ n\in \N{},\, i=1,\ldots,l $. In consequence, Lebesgue's theoerem of dominated convergence can be applied (with $ C=\max{C_{i}: i=1,\ldots,l} $) and we conclude $ J_{sur}(\Omega_{n},T_{n},u_{n}) \to J_{sur}(\Omega,T,u) $ when passing to the limit. 
\end{proof}

\begin{thm}[\textbf{\textit{Solution to the SO Problem}}]
\label{thm:SolSO}
Let the set of admissible shapes be given in Def. \ref{Def: k-adm domains} with $k=4$. Then the shape optimization problem \eqref{PO} with the objective functionals \eqref{Def: local cost functional} and the thermomechanical state equation \eqref{PTO} has at least one solution $\Omega^*\in\mathcal{O}$.
\end{thm}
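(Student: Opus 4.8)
The plan is to read off the assertion from the abstract existence principle, Theorem~\ref{Thm: existence of optimal shapes}, once its two hypotheses have been verified in the concrete situation assembled in Sections~\ref{sec:Domains}--\ref{sec:OpShapes}. First I would fix the abstract data: take the family of admissible shapes to be $\O=\O_{4,\alpha}$ of Definition~\ref{Def: k-adm domains} (which one may also use as the ambient family $\Os$); for $\Omega\in\O$ let the state problem $\mathcal{P}(\Omega)$ be the thermomechanical system \eqref{PTO} with state space $V_P(\Omega)=\C{2,\phi'}{\overline{\Omega}}\oplus\Ck{3,\varphi'}{\overline{\Omega}}{3}$ as in Definition~\ref{Def: state problem}; and let the domain convergence $\Omega_n\xrightarrow{\O}\Omega$ and the state convergence $y_n\rightsquigarrow y$ be those of Definition~\ref{Def: Conv. on var. domains}, with exponents $q=2,\beta=\phi'$ on the temperature component and $q=3,\beta=\varphi'$ on the displacement component. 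By Theorem~\ref{Thm: uniform Schauder estimate u_{T}}, for every $\Omega\in\O$ the problem $\mathcal{P}(\Omega)$ has a unique solution $(T(\Omega),u(\Omega))\in V_P(\Omega)$, so the map $\Omega\mapsto(T(\Omega),u(\Omega))$, and hence its graph $\mathcal{G}=\{(\Omega,T(\Omega),u(\Omega)):\Omega\in\O\}$, are well defined.

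Hypothesis (i), compactness of $\mathcal{G}$ in the sense of \eqref{Def: Gcomp}, is precisely the content of Lemma~\ref{Lemma: compactness of the graph}: from an arbitrary sequence in $\mathcal{G}$ it extracts a subsequence along which $\Omega_{n_k}\xrightarrow{\O}\Omega$, $T_{n_k}\rightsquigarrow T$ and $u_{n_k}\rightsquigarrow u$, with $(T,u)\in V_P(\Omega)$ the unique solution of $\mathcal{P}(\Omega)$, so that $(\Omega,T,u)\in\mathcal{G}$. I would simply invoke that lemma; its engine is the $\O$-uniform Schauder bound of Theorem~\ref{Thm: uniform Schauder estimate u_{T}}, the $\O$-uniform extension operator, and the Arzela--Ascoli compactness of bounded sets of $\C{2,\phi}{\overline{\Oext}}$, resp.\ $\Ck{3,\varphi}{\overline{\Oext}}{3}$, inside the coarser H\"older topologies.

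For hypothesis (ii) I would argue that the cost functional $\mathcal{J}=\mathcal{J}_{vol}+\mathcal{J}_{sur}$ of \eqref{Def: local cost functional} is in fact sequentially continuous for the state-space topology, hence a fortiori lower semi-continuous. The model densities $\mathcal{F}_{vol},\mathcal{F}_{sur}$ are continuous (for the LCF example this is Lemma~\ref{lem:Model} together with the discussion of Subsection~\ref{subsec:LCF}), and a $C^{4,\alpha}$-admissible shape is in particular $C^{1}$-admissible; Lemma~\ref{Lemma:Continuity} then gives $\mathcal{J}_{vol}(\Omega_n,T_n,u_n)\to\mathcal{J}_{vol}(\Omega,T,u)$ and $\mathcal{J}_{sur}(\Omega_n,T_n,u_n)\to\mathcal{J}_{sur}(\Omega,T,u)$ whenever $\Omega_n\xrightarrow{\O}\Omega$, $T_n\rightsquigarrow T$ and $u_n\rightsquigarrow u$ hold in the $\beta=0$ sense of Definition~\ref{Def: Conv. on var. domains}. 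Since convergence of the extensions in $\C{2,\phi'}{\overline{\Oext}}$, resp.\ $\Ck{3,\varphi'}{\overline{\Oext}}{3}$, trivially implies convergence in $\C{2}{\overline{\Oext}}$, resp.\ $\Ck{3}{\overline{\Oext}}{3}$, the $\rightsquigarrow$-convergence with exponents $\phi',\varphi'$ used for part (i) entails the $\beta=0$ convergence needed here, so (ii) holds.

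With (i) and (ii) established, Theorem~\ref{Thm: existence of optimal shapes} produces a minimizer $\Omega^{*}\in\O$ of $\mathcal{J}$ over $\O$, which is the claimed optimal shape. The step deserving the most care here is not an analytical one --- the uniform Schauder estimates and the graph-compactness lemma carry the real weight --- but the bookkeeping of H\"older exponents just indicated: one must check that the topology in which $\mathcal{G}$ is proven compact (the finer $C^{2,\phi'}$/$C^{3,\varphi'}$ topology) refines the one in which $\mathcal{J}$ is continuous (the $C^{2}$/$C^{3}$ topology), and that the regularity class $k=4$ of $\O$ is compatible with the hypotheses of both Lemma~\ref{Lemma: compactness of the graph} and Lemma~\ref{Lemma:Continuity}.
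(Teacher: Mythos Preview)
Your proposal is correct and follows essentially the same approach as the paper: the paper's proof simply states that Lemmas~\ref{Lemma: compactness of the graph} and~\ref{Lemma:Continuity} verify the hypotheses of Theorem~\ref{Thm: existence of optimal shapes}, and the assertion follows. Your version is a more explicit unpacking of the same argument, with the added (and helpful) observation that the $C^{2,\phi'}/C^{3,\varphi'}$ topology in which graph compactness is established refines the $C^{2}/C^{3}$ topology required for continuity of $\mathcal{J}$.
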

\begin{proof}
As demonstrated in the above Lemmas \ref{Lemma: compactness of the graph} and \ref{Lemma:Continuity}, the conditions of Theorem \ref{Thm: existence of optimal shapes} are fulfilled and the assertion follows.
\end{proof}

The application of this result in shape optimization to the optimal reliability problem now is straight forward:

\begin{cor}[\textbf{\textit{Solution to the Optimal Reliability Problem}}]
For all optimal reliability problems from Definitions  \ref{def:Reliability} and \ref{def:OptReliability} there exists at least one solution in the set of admissible shapes ${\cal O}$ for the local, probabilistic failure time model for LCF.  
\end{cor}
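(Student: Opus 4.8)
The plan is to derive the corollary from Theorem~\ref{thm:SolSO} by means of the translation of optimal reliability problems into shape optimization problems carried out in Subsection~\ref{subsec:OR}. First I would observe that the LCF crack initiation model of Subsection~\ref{subsec:LCF} is a local Weibull model in the sense of Definition~\ref{def:LocWeibull}: its surface intensity is governed by the cycle number $N_{sur}=N_{sur}(\nabla u,T)$ (and, when notch support factors are included, also by $\nabla^2 u$), while $N_{vol}$ is either given by an analogous volume mechanism or taken identically equal to $\infty$. Hence, by Lemma~\ref{lem:Wei}, the first failure time is Weibull distributed with scale~\eqref{eqa:scaleWei}, and the relevant objective is the time-independent functional~\eqref{eqa:scaleWei2}, which is of the local form~\eqref{Def: local cost functional} with $\mathcal{F}_{sur}=(1/N_{sur})^m$ and $\mathcal{F}_{vol}=(1/N_{vol})^m$.

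Next I would dispatch the three notions of reliability. For notion~\ref{def:Reliability}~(i) at a fixed warranty time $t$, Lemma~\ref{lem:OpRelSO}~(i) identifies an optimal shape with a minimiser of $\mathcal{J}_t(\Omega,u,T)$ over $\mathcal{O}$, a problem of exactly the type handled by Theorem~\ref{thm:SolSO}. For the stronger notions~(ii) (first order stochastic dominance) and~(iii) (instantaneous hazard), I would invoke Proposition~\ref{prop:OpRelFOSD}: since we may assume the Weibull shape parameter satisfies $m\geq 1$, both problems are equivalent to the fixed-time problem~\ref{def:OptReliability}~(i) and, by part~(ii) of that proposition, to the single minimisation~\eqref{eqa:SOWei} of~\eqref{eqa:scaleWei2}. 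So in every case it is enough to exhibit one minimiser of a functional of the form~\eqref{Def: local cost functional}, and Theorem~\ref{thm:SolSO} delivers it as soon as its hypothesis $\mathcal{F}_{vol},\mathcal{F}_{sur}\in C^0(\R{d})$ has been checked.

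The main obstacle is thus verifying that the LCF objective $\mathcal{F}_{sur}=(1/N_{sur})^m$ is a genuinely continuous function of $(x,T,\nabla T,u,\nabla u,\nabla^2 u,\nabla^3 u)$. This is essentially Lemma~\ref{lem:Model} together with the remark that follows it: the composition of the von Mises evaluation $\sigma\mapsto\sigma_v$, the (possibly nonlinear) Neuber and Ramberg--Osgood inversions~\eqref{eqa:SD}--\eqref{eqa:RO}, the solution of the Coffin--Manson--Basquin equation~\eqref{eqa:CMB} for $\mathcal{N}_{sur}$, and the Arrhenius factor $e^{-Q(T-T_0)}$ is continuous in $\nabla u$ and $T$ (and in $\nabla^2 u$ for notch support models), provided the temperature remains below the limit $T_m$ beyond which the material model is no longer defined. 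That bound holds uniformly over $\mathcal{O}$ by the maximum principle Lemma~\ref{Lemma: max/min principle}, as long as the ambient temperature $T_e$ is bounded by $T_m$ on $\overline{\Oext}$; granting this, $\mathcal{F}_{sur}$ (and likewise $\mathcal{F}_{vol}$) extends to a continuous function on all of $\R{d}$, Theorem~\ref{thm:SolSO} applies, and an optimal shape in $\mathcal{O}$ exists for each of the optimal reliability problems of Definitions~\ref{def:Reliability} and~\ref{def:OptReliability}.
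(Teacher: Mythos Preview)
Your argument is correct and follows the same route as the paper, which simply says ``Combine the results of Lemma~\ref{lem:OpRelSO} and \ref{lem:Model} with the above Theorem~\ref{thm:SolSO}.'' Your version is more careful in one respect: for the uniform-in-$t$ notions \ref{def:Reliability}~(ii) and~(iii) you explicitly invoke Proposition~\ref{prop:OpRelFOSD} (using the Weibull structure and $m\geq 1$) to reduce to a single time-independent minimisation, whereas the paper's one-line proof leaves this reduction implicit in the citation of Lemma~\ref{lem:OpRelSO}. Your additional remark about bounding $T$ below $T_m$ via Lemma~\ref{Lemma: max/min principle} is a legitimate point that the paper glosses over.
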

\begin{proof}
Combine the results of Lemma \ref{lem:OpRelSO} and \ref{lem:Model} with the above Theorem \ref{thm:SolSO}.
\end{proof}
\vspace{.5cm}

We note that these results hold analogously, if the set of admissible shapes with volume constraints is considered, cf. Remark \ref{rem:VolumeConstraints}.

\section{\label{sec:SO} Summary and Outlook}

In the present paper we have shown the existence of optimal solutions to a class of shape optimization problems with the thermo-mechanic PDE as the state equation. The objective functionals can be rather singular type, which forces us to use elliptic regularity theory and domains defined by smooth deformation of a baseline domain. We have also shown how this relates to the notion of optimal reliability of a mechanical design. This generalizes prior work \cite{Hanno,Schmitz} in several respects: A more general setting for optimal reliability problems, a temperature dependet crack initiation process, more flexible admissible shapes and a multi-physical state equation.

A number of new questions naturally arise at his point: The first concerns the construction of boundary value problems that associate surface forces to shapes $\Omega$. This point has been left open in this article and was not even mentioned in previous work \cite{Hanno,Schmitz}. From an applied prospective, such models should come from other physical processes, such as static gas pressure $g=P\nu$ on $\partial\Omega$, that will again depend on the geometry. Taking a potential flow $v$ in a region exterior to $\Omega$ as a simple model, it should be possible to verify the assumptions of Definition \ref{Def: SurfaceForceModel} for $P=P(v)$ using elliptic regularity theory once again. A quick check however reveals that the $2$nd order Shauder estimates for the Poisson equation that are proven in this work are one order too low to meet the $C^{2,\phi}$ continuity requirements for $g$ in the mechanical Schauder estimate. One thus has to use the more general framework of \cite{Agm64} in order to treat even the simplest physical boundary condition model in the framework of elliptic regularity theory. A even more multi-physical approach, starting with simple flow models in the exterior of $\Omega$ and proceeding to more complicated ones, seems to be an interesting research direction for the future.

It would also be desirable, to study the continuous shape derivative of failure probabilities in the given context, see \cite{SokZol92} for the general theory and \cite{BGS,Schmitz} for some first steps in that direction. The mathematically rigorous treatment of shape derivatives for rather singular objective functionals is not an easy task. Material and shape derivatives should have a similar $C^{k,\phi}$ regularity class as the solutions, but also depend on the solutions \cite{SokZol92}, so a careful treatment is desirable, in particular if one would like to consider higher derivatives. Furthermore, a formal inspection of the right hand side of the adjoint equation, for the class of objective functionals given by the optimal reliability application, reveals that the adjoint state can not be a Sobolev function, since the formal expressions for $\frac{\partial J_{sur}(\Omega,T,u)}{\partial u}$ do not define a functional in $H^{-1}(\Omega)$. This raises several interesting questions on the nature and numerical approximation of the adjoint state that are beyond the scope of the present article. 

\vspace{0.5cm}

\noindent \textbf{Acknowledgements:} We would like to thank Tilman Beck,  Rolf Krause, Nadine Moch  Georg Rollmann, Mohamed Saadi, Sebastian Schmitz and Thomas Seibel for interesting discussions. We also thank T. Beck for the permission to reproduce Figure \ref{fig:Damage} (b).

\appendix 
\section{Appendix}

\begin{lem} \label{Lemma: compacness of c_k_alpha} \cite[Lemma 6.36]{GilbTrud} 
Let $ \Omega $ be a $ C^{k,\phi} $-domain in $ \R{n} $ (with $ k \geq 1 $) and let $ S $ be a bounded set in $ \C{k,\phi}{\overline{\Omega}} $. Then $ S $ is precompact in $ \C{j,\beta}{\overline{\Omega}} $ if $ j+\beta < k+\phi $.
\end{lem}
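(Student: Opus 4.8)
The plan is to prove the stated compact embedding $\C{k,\phi}{\overline{\Omega}}\hookrightarrow\C{j,\beta}{\overline{\Omega}}$ by the Arzel\`a--Ascoli theorem combined with an interpolation inequality for H\"older seminorms, splitting the argument into the two basic cases $j=k$ (so necessarily $\beta<\phi$) and $j<k$. It suffices to show that any sequence $\seq{u_{n}}$ with $\sup_{n}\Norm{u_{n}}{\C{k,\phi}{\overline{\Omega}}}\leq M$ has a subsequence converging in $\C{j,\beta}{\overline{\Omega}}$. Throughout I would use that a $C^{k,\phi}$-domain with $k\geq 1$ has the segment property, is quasiconvex (the geodesic distance inside $\overline{\Omega}$ is comparable to the Euclidean one), and has its boundary locally flattened by $C^{k,\phi}$ charts; these three facts let one pass derivatives to the boundary and localise the estimates below.

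Step 1: extraction of a $\C{k}{\overline{\Omega}}$-convergent subsequence. The bound on $\Norm{u_{n}}{\C{k,\phi}{\overline{\Omega}}}$ gives that for every multi-index $\gamma$ with $|\gamma|\leq k$ the derivatives $D^{\gamma}u_{n}$ are uniformly bounded; for $|\gamma|=k$ they are uniformly $\phi$-H\"older, and for $|\gamma|<k$ they are uniformly Lipschitz (bound their gradient, namely $D^{\gamma+e_{i}}u_{n}$, by $\Norm{u_{n}}{\C{k,\phi}{\overline{\Omega}}}$ and use quasiconvexity), so in either case they form an equicontinuous family. By finitely many applications of Arzel\`a--Ascoli we extract a subsequence, still written $\seq{u_{n}}$, with $D^{\gamma}u_{n}\to v_{\gamma}$ uniformly on $\overline{\Omega}$ for all $|\gamma|\leq k$. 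Writing differences of $D^{\gamma}u_{n}$ as line integrals of $D^{\gamma+e_{i}}u_{n}$ along segments lying in $\Omega$ (segment property) and passing to the limit identifies $v_{\gamma}=D^{\gamma}v_{0}$, so with $u:=v_{0}$ we get $u\in\C{k}{\overline{\Omega}}$ and $\Norm{u_{n}-u}{\C{k}{\overline{\Omega}}}\to 0$; taking a lower limit in the H\"older quotient moreover yields $u\in\C{k,\phi}{\overline{\Omega}}$ with the same bound $M$.

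Step 2: upgrading to $\C{j,\beta}{\overline{\Omega}}$-convergence by interpolation. For $w\in\C{0,\nu}{\overline{\Omega}}$ and $0\leq\beta<\nu\leq 1$, splitting the H\"older quotient at a scale $r$ (use $|w(x)-w(y)|\leq[w]_{\nu}|x-y|^{\nu}$ for $|x-y|\leq r$ and $|w(x)-w(y)|\leq 2\Norm{w}{\C{0}{\overline{\Omega}}}$ otherwise, then optimise in $r$) gives
\[
[w]_{\beta}\ \leq\ C\,\Norm{w}{\C{0}{\overline{\Omega}}}^{\,1-\beta/\nu}\,[w]_{\nu}^{\,\beta/\nu},
\]
where near $\partial\Omega$ one first flattens the boundary so the segments used stay in $\overline{\Omega}$. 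If $j=k$ (hence $\beta<\phi$), apply this with $w=D^{\gamma}(u_{n}-u)$, $|\gamma|=k$, and $\nu=\phi$: then $[w]_{\phi}\leq 2M$ while $\Norm{w}{\C{0}{\overline{\Omega}}}\to 0$ by Step 1, so $[D^{\gamma}(u_{n}-u)]_{\beta}\to 0$ and hence $\Norm{u_{n}-u}{\C{k,\beta}{\overline{\Omega}}}\to 0$. If $j<k$, then for $|\gamma|=j$ the functions $D^{\gamma}(u_{n}-u)$ are uniformly Lipschitz with $\Norm{D^{\gamma}(u_{n}-u)}{\C{0}{\overline{\Omega}}}\to 0$; applying the inequality with $\nu=1$ gives $[D^{\gamma}(u_{n}-u)]_{\beta}\to 0$ for every $\beta\in[0,1)$, while the borderline $\beta=1$ case (which forces $j+1\leq k$) follows directly from the uniform convergence of the $(j{+}1)$-st derivatives together with quasiconvexity. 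In all cases $\Norm{u_{n}-u}{\C{j,\beta}{\overline{\Omega}}}\to 0$, which is the claim.

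I expect the main obstacle to be the boundary bookkeeping in Steps 1 and 2: the ``integrate the derivative along a segment'' and ``split the H\"older quotient at scale $r$'' arguments are immediate in the interior or on a convex set but require flattening $\partial\Omega$ with the $C^{k,\phi}$ boundary charts and patching via a partition of unity to remain valid up to $\overline{\Omega}$ --- which is exactly why the hypothesis is a $C^{k,\phi}$-domain rather than an arbitrary bounded open set.
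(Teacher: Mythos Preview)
The paper does not prove this lemma at all: it is placed in the appendix as a direct citation of \cite[Lemma 6.36]{GilbTrud}, with no argument supplied. So there is no ``paper's own proof'' against which to compare.

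Your proposal is correct and is essentially the standard proof one finds in Gilbarg--Trudinger or Adams: extract a $C^{k}$-convergent subsequence via Arzel\`a--Ascoli (using that the $k$-th derivatives are $\phi$-H\"older and the lower ones are Lipschitz on a quasiconvex domain), then upgrade to $C^{j,\beta}$ by the elementary interpolation $[w]_{\beta}\leq C\,\Norm{w}{C^{0}}^{1-\beta/\nu}[w]_{\nu}^{\beta/\nu}$. One small remark: this interpolation inequality is purely pointwise and does not require any boundary regularity or chart-flattening --- the split at scale $r$ works on an arbitrary set. The $C^{k,\phi}$-domain hypothesis is genuinely needed only for quasiconvexity (to bound Lipschitz seminorms by gradient sup-norms) and for the segment argument identifying the uniform limits $v_{\gamma}$ as derivatives of $v_{0}$; your final paragraph slightly overstates where the boundary charts enter.
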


\begin{defn}[\textbf{Hemisphere Property}]\label{Def: hemisphere transformation} \cite[S. 667]{Agm64}$ $\\
Let $ \Omega \subset \R{n} $ be a domain with a  $ C^{k,\phi} $ boundary portion $ \Gamma $ and $ A \subseteq \Omega $  be a subdomain such that $ \partial A \cap \partial \Omega \subseteq \overset{\circ}{\Gamma} $ in the $ (n-1) $-dimensional sense.\\
$ A $ is said to satisfy a $ C^{k,\phi} $-hemisphere property on $ \Gamma $, if there exists constant $ d>0 $ such that every $ x \in A$ with $ \dist(x,\Gamma)\leq d $ possesses a neighborhood $ U_{x}\subset \R{n} $ where 
\begin{itemize}
\item[(i)]$ \overline{U}_{x} \cap \partial \Omega \subseteq \Gamma$,
\item[(ii)]$ \Umg{d/2}{x} \subseteq U_{x} $ and
\item[(iii)](a) $\overline{U}_{x} \cap \overline{\Omega}=\T(\Sigma_{R(x)}),  ~~~~~~~$(b)$ \overline{U}_{x} \cap \partial \Omega=\T(F_{R(x)}) $, $ 0<R(x)\leq 1 $
\end{itemize}
for some hemisphere $ \Sigma_{R(x)} $ and it's flat boundary $ F_{R(x)} $.
The transformations $ \T,\T^{-1} \in C^{k,\varphi} $ are dependent on the point $ x\in A $.
\end{defn}

\begin{lem}\label{Lemma: extension Lemma 1}\cite[Lemma 6.37]{GilbTrud}\\
Let $ \Omega $ be a $ C^{k,\phi} $ domain in $ \R{n} $ (with $ k\geq 1 $) and let $ \Omega' $ be an open set containing $ \overline{\Omega} $. Suppose $ u\in \C{k,\phi}{\overline{\Omega}} $. Then there exists a function $ w\in C^{k,\phi}_{0}(\Omega') $ such that $ w=u $ in $ \Omega' $ and 
\[ 
\Norm{w}{\C{k,\phi}{\Omega'}}\leq C \Norm{u}{\C{k,\phi}{\Omega}},~~~C=C_{k,\Omega,\Omega'}
\]
\end{lem}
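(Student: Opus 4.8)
The plan is to reduce to the model case of a boundary flattened by $C^{k,\phi}$ charts, to extend there by a higher--order reflection across the flat piece, and to glue the local pieces back with a partition of unity, shrinking the chart neighbourhoods so that the result is supported in $\Omega'$. Concretely, using the $C^{k,\phi}$-regularity of $\partial\Omega$ and compactness of $\overline\Omega$, I would fix a finite open cover $U_{0},U_{1},\dots,U_{N}$ of $\overline\Omega$ with $\overline{U_{0}}\subset\Omega$, with $U_{j}\subset\subset\Omega'$ for $j\geq1$, and $C^{k,\phi}$-diffeomorphisms $\psi_{j}:U_{j}\to B$ onto a ball $B\subset\R{n}$ mapping $U_{j}\cap\Omega$ onto the upper half ball $B^{+}=\{y\in B:y_{n}>0\}$ and $U_{j}\cap\partial\Omega$ onto $B\cap\{y_{n}=0\}$ (straightening charts whose existence is guaranteed by the regularity of $\partial\Omega$, cf.\ Definition~\ref{Def: hemisphere transformation}); then I would pick a smooth partition of unity $\{\chi_{j}\}_{j=0}^{N}$ subordinate to this cover.

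Next, on each boundary patch I would transplant, reflect, and cut off. Put $\tilde u_{j}=(\chi_{j}u)\circ\psi_{j}^{-1}\in\C{k,\phi}{\overline{B^{+}}}$ and extend it into $\{y_{n}<0\}$ by
\[
\tilde u_{j}(y',y_{n})=\sum_{i=1}^{k+1}c_{i}\,\tilde u_{j}\!\left(y',-\tfrac{y_{n}}{i}\right),\qquad y_{n}<0,
\]
where the $c_{i}$ solve $\sum_{i=1}^{k+1}c_{i}(-1/i)^{m}=1$ for $m=0,\dots,k$; this system is solvable, being Vandermonde in the distinct nodes $-1/i$, and the $c_{i}$ depend only on $k$. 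Matching one--sided derivatives up to order $k$ along $\{y_{n}=0\}$ shows the reflected $\tilde u_{j}$ is of class $C^{k}$ across the interface, and since the reflection is a fixed linear combination of affine rescalings of $y_{n}$, its $k$-th derivatives remain Hölder; hence $\tilde u_{j}\in\C{k,\phi}{B}$ with $\Norm{\tilde u_{j}}{\C{k,\phi}{B}}\leq c(k)\,\Norm{(\chi_{j}u)\circ\psi_{j}^{-1}}{\C{k,\phi}{B^{+}}}$. Multiplying by a cut--off $\kappa_{j}\in C_{0}^{\infty}(B)$ equal to $1$ on $\psi_{j}(\{\chi_{j}\neq0\}\cap\overline\Omega)$ and pushing forward, $w_{j}:=(\kappa_{j}\tilde u_{j})\circ\psi_{j}$ is a $C^{k,\phi}$ function supported compactly in $U_{j}$; extend it by zero to $\R{n}$, set $w_{0}=\chi_{0}u$ extended by zero, and define $w=\sum_{j=0}^{N}w_{j}$.

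It then remains to check $w=u$ on $\overline\Omega$ and the estimate. On $\overline\Omega$, each $w_{j}$ with $j\geq1$ equals $\chi_{j}u$ wherever $\chi_{j}\neq0$ (there $\psi_{j}$ lands in $\overline{B^{+}}$, so no reflected values enter and $\kappa_{j}=1$) and vanishes elsewhere, while $w_{0}=\chi_{0}u$, so $w=\sum_{j}\chi_{j}u=u$ there; moreover $\supp w\subset\overline{U_{0}}\cup\bigcup_{j\geq1}U_{j}\subset\subset\Omega'$, so $w\in C_{0}^{k,\phi}(\Omega')$. For the norm, the reflection contributes the factor $c(k)$; passing through $\psi_{j}$ and $\psi_{j}^{-1}$ costs factors controlled by $\Norm{\psi_{j}}{\C{k,\phi}{U_{j}}}$ and $\Norm{\psi_{j}^{-1}}{\C{k,\phi}{B}}$ via the chain--rule bound quoted as (6.30) in \cite{GilbTrud}; multiplication by $\chi_{j}$ and $\kappa_{j}$ costs their $C^{k,\phi}$-norms; and summing $N+1$ terms costs a factor $N+1$. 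All of these depend only on $k$, on the fixed cover, charts, partition of unity and cut--offs attached to $\Omega$, and on $\Omega'$ through the shrinking of the $U_{j}$, which gives $\Norm{w}{\C{k,\phi}{\Omega'}}\leq C_{k,\Omega,\Omega'}\Norm{u}{\C{k,\phi}{\Omega}}$.

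The step I expect to be the main obstacle is exactly the verification that the reflected function lies in $C^{k,\phi}$ across $\{y_{n}=0\}$ — solvability of the linear system for the $c_{i}$ and the resulting matching of all derivatives of order $\leq k$, together with the Hölder estimate for the top-order derivatives — and the bookkeeping that isolates the dependence of the final constant on $\Omega$ to the finitely many fixed charts (and shows it does not depend on $u$). Once the model reflection is established, the partition of unity, the cut--offs and the chain rule are routine.
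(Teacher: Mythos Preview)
Your proposal is correct and follows essentially the same approach as the paper. The paper does not prove this lemma independently but cites it from \cite[Lemma~6.37]{GilbTrud}; the underlying argument there, and the sketch the paper gives when re-using it in Section~\ref{sec:OpShapes}, is precisely the higher-order reflection across a flattened boundary piece combined with a finite cover and partition of unity that you describe, with the constant tracked through the chain-rule estimate (6.30) of \cite{GilbTrud}.
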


\begin{lem}\label{Lemma: extension Lemma 2}\cite[Lemma 6.38]{GilbTrud} \\
Let $ \Omega $ be a $ C^{k,\phi} $ domain in $ \R{n} $ (with $ k\geq 1 $) and let $ \Omega' $ be an open set containing $ \overline{\Omega} $. Suppose $ \psi\in \C{k,\phi}{\RO} $ or $ \psi \in \C{k,\phi}{\overline{\Omega}} $\footnote{See the explanations concerning $ C^{k,\phi} $ domains at the beginning of Chapter 6.2 in \cite{GilbTrud}}. Then there exists a function $ \Psi \in C^{k,\phi}_{0}(\Omega') $ such that $ \Psi=\psi $ on $ \RO $, $ \overline{\Omega} $ respectively.
\end{lem}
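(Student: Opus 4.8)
The plan is to split the statement into its two alternatives and to reduce the boundary alternative to the interior one, which is already settled by Lemma~\ref{Lemma: extension Lemma 1}. In the case $\psi\in\C{k,\phi}{\overline{\Omega}}$ there is nothing new to build: applying Lemma~\ref{Lemma: extension Lemma 1} (that is, \cite[Lemma 6.37]{GilbTrud}) directly to $\psi$ yields a function $\Psi\in C^{k,\phi}_{0}(\Omega')$ with $\Psi=\psi$ on $\overline{\Omega}$, hence in particular $\Psi=\psi$ on $\RO$. Thus the whole difficulty sits in the first alternative, where $\psi$ is prescribed only on the boundary.

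For $\psi\in\C{k,\phi}{\RO}$, I would first extend $\psi$ from $\RO$ to a function $\tilde\psi\in\C{k,\phi}{\overline{\Omega}}$ with $\tilde\psi\restriction_{\RO}=\psi$, and then invoke the previous case. The construction of $\tilde\psi$ is local. Since $\Omega$ is a $C^{k,\phi}$ domain, $\RO$ is compact and can be covered by finitely many boundary neighbourhoods $U_{i}$ carrying hemisphere transformations $\T_{i}$ as in Lemma~\ref{Lemma: uniform hemisphere condition}, straightening $\RO\cap U_{i}$ onto a flat piece $F_{R}\subset\{y_{n}=0\}$ with $\overline{\Omega}$ lying on one side. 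In each chart I would pull $\psi$ back to a function $\hat\psi_{i}(y')$ on the flat piece, extend it to a full neighbourhood by declaring it independent of the normal coordinate, $\hat\psi_{i}(y',y_{n}):=\hat\psi_{i}(y')$, and push the result forward by $\T_{i}^{-1}$ to a local extension $\psi_{i}\in\C{k,\phi}{\overline{U_{i}}\cap\overline{\Omega}}$ with $\psi_{i}=\psi$ on $\RO\cap U_{i}$. Because $\T_{i},\T_{i}^{-1}\in C^{k,\phi}$ and the constant-in-$y_{n}$ extension neither raises the order of differentiation nor destroys the $\phi$-Hölder seminorm of the top-order derivatives, each $\psi_{i}$ is of class $C^{k,\phi}$.

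To globalise, I would choose a partition of unity $\{\chi_{i}\}$ subordinate to $\{U_{i}\}$ with $\sum_{i}\chi_{i}\equiv 1$ on a neighbourhood of $\RO$ in $\overline{\Omega}$, and set $\tilde\psi:=\sum_{i}\chi_{i}\,\psi_{i}$ there, extended by $0$ away from $\supp\bigl(\sum_{i}\chi_{i}\bigr)$. Since each $\psi_{i}$ agrees with $\psi$ on $\RO\cap U_{i}$ and $\sum_{i}\chi_{i}=1$ on $\RO$, one gets $\tilde\psi=\psi$ on $\RO$; finiteness of the cover together with $C^{k,\phi}$ bounds on the $\chi_{i}$ and $\psi_{i}$ gives $\tilde\psi\in\C{k,\phi}{\overline{\Omega}}$. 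Applying Lemma~\ref{Lemma: extension Lemma 1} to $\tilde\psi$ then produces $\Psi\in C^{k,\phi}_{0}(\Omega')$ with $\Psi=\tilde\psi$ on $\overline{\Omega}$, whence $\Psi=\psi$ on $\RO$, which closes both cases.

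The main obstacle I anticipate is verifying that the local constant-in-normal-direction extension, followed by pushforward and patching, genuinely stays in $C^{k,\phi}$: one must track how the chain rule distributes derivatives of orders up to $k$ across the $\T_{i}^{-1}$ and the cutoffs $\chi_{i}$, and confirm that finite sums and products of $C^{k,\phi}$ functions with uniformly controlled norms remain $C^{k,\phi}$, including the top-order Hölder seminorm. This is precisely the bookkeeping already performed for the boundary charts in Lemma~\ref{Lemma: uniform hemisphere condition} and in the proof scheme behind Lemma~\ref{Lemma: extension Lemma 1}, so no idea beyond careful use of the chain rule and the algebra of Hölder spaces should be required.
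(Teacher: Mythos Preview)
The paper does not actually prove this lemma: it appears in the appendix as a quotation of \cite[Lemma 6.38]{GilbTrud} without argument, so there is no ``paper's own proof'' to compare against. Your sketch is correct and is precisely the standard strategy (and the one Gilbarg--Trudinger use): reduce the boundary case to the interior case of Lemma~\ref{Lemma: extension Lemma 1} by first locally extending $\psi$ off $\RO$ as constant in the normal direction in straightened coordinates, then patching with a partition of unity, and finally invoking Lemma~\ref{Lemma: extension Lemma 1}. The only bookkeeping you flag---that compositions with the $C^{k,\phi}$ charts and multiplication by smooth cutoffs preserve the $C^{k,\phi}$ class---is routine and is exactly the content of inequalities (6.29)--(6.30) in \cite{GilbTrud}, so nothing is missing.
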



\end{document}